\documentclass[11pt,twoside,reqno]{amsart}
\usepackage[utf8]{inputenc}
\usepackage[T1]{fontenc}
\usepackage{import}
\usepackage{newclude}
\usepackage{amsmath,amssymb,amsthm,mathtools}
\usepackage[toc,page]{appendix}
\usepackage{longtable,booktabs,array}
\usepackage{anysize}
\usepackage{amscd}
\usepackage{stmaryrd}
\usepackage{wasysym}
\usepackage{mathrsfs}
\usepackage[scr=boondox]{mathalfa}
\usepackage{enumitem}
\usepackage{accents}
\usepackage{dsfont}
\usepackage{soul}
\usepackage{color}
\usepackage[all]{xy}
\usepackage{float}
\usepackage{bm}
\usepackage{makecell}
\usepackage{thmtools}      
\usepackage{setspace}
\usepackage{comment}
\usepackage{tikz}
\usepackage{tikz-cd}
\usepackage{mathdots}
\usepackage{mathtools}
\usepackage{graphicx}
\usepackage[myheadings]{fullpage}
\usepackage{bbm}

\usepackage[hypertexnames=false,colorlinks,linkcolor=black,citecolor=black]{hyperref}
\usepackage[capitalize,nameinlink]{cleveref}

\crefname{appendix}{Appendix}{Appendices}
\Crefname{appendix}{Appendix}{Appendices}

\numberwithin{equation}{section}
\newcommand\mtop{.95in}
\newcommand\mbottom{.95in}
\newcommand\mleft{1in}
\newcommand\mright{1in}
\usepackage[top = \mtop, bottom = \mbottom, left = \mleft, right=\mright]{geometry}

\DeclareMathOperator{\Mat}{Mat}

\newtheorem{thm}{Theorem}[section]

\newtheorem{prop}[thm]{Proposition}

\newtheorem{lemma}[thm]{Lemma}
\newtheorem{conj}[thm]{Conjecture}

\theoremstyle{definition}
\newtheorem{defi}[thm]{Definition}

\newtheorem{rmk}[thm]{Remark}

\newcommand\reallywidehat[1]{%
\savestack{\tmpbox}{\stretchto{%
  \scaleto{%
    \scalerel*[\widthof{\ensuremath{#1}}]{\kern-.6pt\bigwedge\kern-.6pt}%
    {\rule[-\textheight/2]{1ex}{\textheight}}
  }{\textheight}%
}{0.5ex}}%
\stackon[1pt]{#1}{\tmpbox}%
}
\DeclareSymbolFont{bbold}{U}{bbold}{m}{n}
\DeclareSymbolFontAlphabet{\mathbbold}{bbold}

\makeatletter
\def\@tocline#1#2#3#4#5#6#7{\relax
  \ifnum #1>\c@tocdepth 
  \else
    \par \addpenalty\@secpenalty\addvspace{#2}%
    \begingroup \hyphenpenalty\@M
    \@ifempty{#4}{%
      \@tempdima\csname r@tocindent\number#1\endcsname\relax
    }{%
      \@tempdima#4\relax
    }%
    \parindent\z@ \leftskip#3\relax \advance\leftskip\@tempdima\relax
    \rightskip\@pnumwidth plus4em \parfillskip-\@pnumwidth
    #5\leavevmode\hskip-\@tempdima
      \ifcase #1
       \or\or \hskip 1em \or \hskip 2em \else \hskip 3em \fi%
      #6\nobreak\relax
    \hfill\hbox to\@pnumwidth{\@tocpagenum{#7}}\par
    \nobreak
    \endgroup
  \fi}
\makeatother

\newcommand{\indic}{\mathbbm{1}}
\newcommand{\Z}{\mathbb{Z}}

\newcommand{\A}{\mathfrak{A}}
\renewcommand{\S}{\mathfrak{S}}

\newcommand{\C}{\mathbb{C}}

\newcommand{\E}{\mathbb{E}}

\newcommand{\mc}{\mathcal}

\renewcommand{\l}{\lambda}

\renewcommand{\L}{\mathcal{L}}

\DeclareMathOperator{\Tr}{Tr}

\DeclareMathOperator{\Id}{Id}

\DeclareMathOperator{\sgn}{sgn}

\DeclareMathOperator{\Pois}{Pois}

\title{Hitting time mixing for random $k$-cycles}
\author{Chen Shang, Jiahe Shen, Jiyue Zeng, and Xinyi Zhang}

\date{\today}

\begin{document}

\thanks{We thank Mehtaab Sawhney for suggesting this project, and Evita Nestoridi for helpful discussions. Jiahe Shen, Jiyue Zeng, and Xinyi Zhang acknowledge support from Ivan Corwin’s NSF grant DMS-2246576 and Simons Investigator grant 929852.}

\maketitle

\begin{abstract}
In this paper, we study the random walk on the symmetric group $\S_n$ generated by the conjugacy class of $k$-cycles, where $2\le k=o(n/(\log n)^4)$. We prove that the walk exhibits hitting-time mixing: at the first time when every card has been touched, the distribution is already close to equilibrium. For odd \(k\), the equilibrium
measure is the uniform measure on \(\A_n\). For even \(k\), the walk first mixes to the parity mixture determined by the hitting time, and in our range this mixture is asymptotically \(U_{\S_n}\). Our argument combines a refined fixed-time approximation for the random $k$-cycle walk near the cutoff window with an auxiliary marking scheme inspired by Jain-Sawhney's work \cite{jain2024hitting} on random
transpositions. The main new feature is a parity-compatible coupling which handles both odd and even $k$-cycles in a unified framework. We also prove a hitting-time mixing result in the opposite regime $k\ge n-o(n^{1/2})$, and formulate a conjecture for all $2\le k\le n-1$.
\end{abstract}

\textbf{Keywords: }\keywords{Card shuffling, Random walk, Symmetric group, Representation theory}

\textbf{Mathematics Subject Classification (2020): }\subjclass{60B20 (primary); 15B52 (secondary)}

\tableofcontents

\section{Introduction}\label{sec: Intro}

\subsection{Main results}

This paper studies the hitting-time mixing problem for the random \(k\)-cycle walk on \(\mathfrak S_n\). Starting from the identity, at each step we multiply by a uniformly chosen \(k\)-cycle, and we stop the walk at the first time \(\tau\) when every card has been touched. Our main result shows that, in the regime
$$
2\leq k=o\left(\frac{n}{(\log n)^4}\right),
$$
this natural stopping time is already a mixing time: \(X_\tau\) is close in total variation to the relevant uniform measure, subject only to the necessary parity constraint. This extends the hitting-time mixing theorem of Jain-Sawhney \cite{jain2024hitting} for random transpositions to random \(k\)-cycles in a range where \(k\) is allowed to grow with \(n\).

The starting point for this problem is the classical theory of mixing for random walks generated by conjugacy classes of \(\mathfrak S_n\). For random transpositions, Diaconis-Shahshahani \cite{diaconis1981generating} introduced the nonabelian Fourier-analytic method and proved sharp cutoff at time \((n\log n)/2\). Subsequent works extended and refined this circle of ideas for more general conjugacy-class walks, including character bounds, cutoff, and limiting profiles for cycle shuffles; see, for example, Roichman \cite{Roichman1996}, Lulov-Pak \cite{lulov2002rapidly}, Hough \cite{hough2016random}, Teyssier \cite{teyssier2020limit}, and Nestoridi-Olesker-Taylor \cite{nestoridi2022limit}. A more refined question asks whether the walk is already mixed at the first time when every card has been touched. This is the earliest possible time at which total-variation mixing can occur, since before this time at least one card has never been moved. For random transpositions, this hitting-time mixing problem was raised by Berestycki and formulated explicitly by Teyssier \cite[Conjecture 1.2]{teyssier2020limit}; it was recently proved by Jain-Sawhney \cite{jain2024hitting}. Their argument combines a precise fixed-time approximation of the walk with a strong-stationary-time type coupling after the cutoff window, and they noted that the method should extend beyond transpositions, in particular to random \(k\)-cycles for fixed \(k\). The present paper carries this out for \(k\)-cycles, and proves hitting-time mixing throughout the growing range above.

Let us now define the random $k$-cycle walk precisely. Let $P_n$ be the
probability measure on $\mathfrak S_n$ which is uniform on the conjugacy class
of $k$-cycles:
\begin{equation}\label{eq: measure of k-cycle}
P_n(\sigma)=
\begin{cases}
\frac{k}{n(n-1)\cdots(n-k+1)}, & \text{if } \sigma \text{ is a } k\text{-cycle},\\
0, & \text{otherwise}.
\end{cases}
\end{equation}
We consider the discrete-time Markov chain
$$
X_{t+1}=\sigma_{t+1}\cdot X_t,
$$
where $\sigma_1,\sigma_2,\ldots$ are independent random variables with law
$P_n$. Thus $X_t\sim P_n^{*t}$.

Because a $k$-cycle has sign $(-1)^{k-1}$, the limiting uniform measure has
to be interpreted with the appropriate parity constraint. Following the set-up
used by Nestoridi and Olesker-Taylor \cite{nestoridi2022limit} and Hough
\cite{hough2016random}, we compare the walk with the uniform measure on the
parity class on which it is supported: if $k$ is odd, then the walk is
supported on $\mathfrak A_n$; if $k$ is even, then the walk is supported on
$\mathfrak A_n$ at even times and on $\mathfrak A_n^c$ at odd times. We
denote by $U_{\mathfrak S_n}$ the uniform measure on $\mathfrak S_n$, and by
$U_{\mathfrak A_n}$ and $U_{\mathfrak A_n^c}$ the uniform measures on
$\mathfrak A_n$ and $\mathfrak A_n^c$, respectively.

The stopping time of interest is the first time at which every card has been
touched by the walk. More explicitly, write the $l$-th chosen $k$-cycle as
$(i_{l,1}\,\cdots\, i_{l,k})$. We define
\begin{equation}\label{eq: hitting time}
\tau
:=
\min\left\{
t\ge 1:
\min_{1\le j\le n}
\sum_{l=1}^t
\mathbf 1\{j\in\{i_{l,1},\ldots,i_{l,k}\}\}
\ge 1
\right\}.
\end{equation}
Before time $\tau$, at least one card has not been touched, so the walk cannot
be close in total variation to the relevant uniform measure. Our main theorem
shows that this obstruction is the only one in the regime
$2\le k=o(n/(\log n)^4)$: throughout this range, which goes well beyond the
fixed $k$ setting suggested by Jain-Sawhney, the walk is already mixed at the
hitting time. 

\begin{thm}\label{thm: hitting time mixing_main thm}
Suppose $n=\omega(1),2\le k\le o(n/(\log n)^{4})$. With notation as above, we have 
$$d_{TV}(X_\tau,U_{\A_n})\le \exp(-(\log\log n)^{1/2+o(1)})$$
when $k$ is odd, and
$$d_{TV}(X_\tau,U_{\mathfrak{S}_n})\le\exp(-(\log\log n)^{1/2+o(1)})$$
when $k$ is even. 
\end{thm}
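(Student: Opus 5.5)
We follow the Jain--Sawhney strategy, adapted to $k$-cycles, in three steps: a fixed-time approximation, a marking scheme with a coupling, and a parity bookkeeping at the end.

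\textbf{Step 1: the hitting time and a fixed-time description.} Each step touches $k$ cards, so $\tau$ is concentrated around $t_* = \frac{n}{k}\log n$ in a window of width $O(n/k)$. Write $\tau = t_* + \frac{n}{k}(G+o(1))$ with $G$ asymptotically Gumbel. Set $t_0 = t_* - \frac{n}{k}L$ with $L = L(n)\to\infty$ slowly, say $L=(\log\log n)^{1/2}$. The fixed-time input, to be established first, is a refined approximation of $X_{t_0}$ near the cutoff window via the character ratio bounds of Hough and Nestoridi--Olesker-Taylor. Concretely, $X_{t_0}$ should be close in total variation to the following model: the set $F$ of untouched cards has size about $e^{L}$, and conditionally on $F$, the permutation restricted to the touched cards is nearly uniform (with the parity constraint) on permutations of $[n]\setminus F$. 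We also need the analogous statement with "untouched" replaced by "touched only by cycles whose other entries are untouched". The hypothesis $k=o(n/(\log n)^4)$ enters here: it keeps the character-ratio error terms $\exp(O(k^2 t/n^2\cdot\mathrm{polylog}))$ small over the window, and it keeps the overlap of a single $k$-cycle with $F$ small.

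\textbf{Step 2: marking and coupling after time $t_0$.} As in Jain--Sawhney, we mark cards, and declare a card marked once it has been touched by a $k$-cycle after time $t_0$ in a suitable manner. The marking should ensure that, conditional on the marked set and the history, the arrangement of the marked cards is exchangeable. For transpositions the rule is "mark when touched together with a marked card". For a $k$-cycle $(i_1\cdots i_k)$ we instead use that $\sigma\cdot X$ with $\sigma$ a uniform $k$-cycle on a fixed support $S$ is invariant under conjugation by permutations of $S$. We then build a coupling: right-multiplying by a uniform permutation of the set of already-touched cards leaves the law of future cycles invariant, up to parity. The new parity-compatible element is the following. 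A uniform permutation of a set is not available inside $\A_n$, so we instead randomize by a uniform \emph{even} permutation. When that set has size at least $3$, this is still enough to generate uniformity in $\A_n$. For even $k$, the global sign is $(-1)^\tau$. We therefore show that conditionally on the marking process, $X_\tau$ is uniform on $\A_n$ or $\A_n^c$ according to $\tau \bmod 2$. We then show $\mathbb P(\tau \text{ even}) = 1/2 + o(1)$, which follows since $\tau$ spreads over a window of width $n/k\to\infty$ with smooth (Gumbel-type) local behaviour. This gives $U_{\S_n}$.

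\textbf{Step 3: gluing.} Combining the two steps, at time $\tau$ every card has been touched. Cards in $F$ become marked at their first touch, at which point they are absorbed into an exchangeable block with the bulk. The total variation error is then a sum of three terms: the Step 1 approximation error, the probability $e^{-\Omega(L)}$-type of coupling failure (for instance, a $k$-cycle after $t_0$ touching two cards of $F$ simultaneously in a bad configuration, or $\tau<t_0$), and the parity error. Optimizing $L$ yields the $\exp(-(\log\log n)^{1/2+o(1)})$ rate.

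The main obstacle is Step 2, namely designing the marking rule for $k$-cycles so that a single cycle can simultaneously touch several unmarked cards, including several cards of $F$, while the conditional exchangeability stays exact within the correct parity class. I would first try a rule that marks all cards of a cycle at once whenever the cycle meets the current marked set in at least one card. I would verify invariance by conjugating the cycle with the uniform even permutation of the marked set, and handle the degenerate case where the marked set has size less than $3$ separately, since it has negligible probability at times $\ge t_0$.
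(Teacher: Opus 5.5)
There is a genuine gap in Step 2, and your tentative rule does not have the property you claim for it. Suppose that, given the marked set $M$ with $|M|=m$, $X$ is uniform on $\Omega_t(S)$, i.e.\ on permutations of $M$ of the correct parity fixing $S=[n]\setminus M$. Suppose the next move is a uniform $k$-cycle $\sigma$ containing exactly one unmarked card $i$. Then $(\sigma X)(i)=\sigma(i)\in M$ always. So $\sigma X$ is uniform on $\{\pi\in\Omega_{t+1}(S\setminus\{i\}):\pi(i)\ne i\}$ and puts no mass on the fraction $1/(m+1)$ of targets that fix $i$. Conjugating by (even) permutations of $M$ cannot repair this, because the defect concerns the image of the newly marked card.

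The idea you are missing is the paper's modified kernel $Q_{n,t}$. It moves a fraction $1/(m+1)$ of the mass of the one-unmarked cycles onto labelled dummy moves that mark $i$ without moving it: the identity for odd $k$, and a uniform transposition $(ab)$ with $\{a,b\}\subset M$ for even $k$. This, not the walk itself, is where parity becomes an issue; with genuine $k$-cycles parity is automatic. With weights $C_tq_t$, every target has the same total preimage weight whether or not $\pi(i)=i$, which gives exact conditional uniformity (Lemma~\ref{lem:uniform}). One then pays $d_{TV}(P_n,Q_{n,t})\approx |S_t|k/n^2$ per step to couple back to the genuine walk. Cycles containing two or more unmarked cards are treated as coupling failures, at total cost $O(k(\log n)^3/n)$.

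Your rule of marking every card of a cycle that meets $M$ also fails in that multi-card case. For $j\ge 2$ unmarked cards $J$, $\sigma$ sends a card of $J$ into $J$ with probability $(j-1)/(k-1)$ rather than about $j/n$. Alternatively, you could keep only genuine cycles and carry the defect $1/(m+1)$ per marking, which sums to $O(\log n/n)$. But then you must drop the exactness claim and do that bookkeeping explicitly.

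Step 1 is also only asserted. The Fourier estimate controls only the law of $X_{t_0}$, compared with $\nu_t$, whose Poisson set is a set of fixed points, not of untouched cards. The joint statement you need is Proposition~\ref{prop:key}: given that the untouched set is exactly $M$, $X_{t_0}$ is close to uniform on $\Omega_{t_0}(M)$. The paper obtains it in three moves:
\begin{itemize}
\item it uses the inclusion--exclusion identity $\mathbf 1_{\mathcal F(M)}=\sum_{T\supseteq M}(-1)^{|T|-|M|}\mathbf 1_{\mathcal G(T)}$;
\item it applies the fixed-time theorem to the walk conditioned on $\mathcal G(T)$, which is the $k$-cycle walk on $n-|T|$ points at a shifted time;
\item it checks that the alternating sum of the measures $\nu^T$ collapses to the uniform measure.
\end{itemize}

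Your error accounting is also off. The fixed-time theorem is only available for $|t-\frac{n\log n}{k}|\le \frac{n}{2k}\log\log\log n$, so $L=(\log\log n)^{1/2}$ is out of range. The dominant error is $\mathbf P(\tau\le t_0)\approx\exp(-e^{L})$, not $e^{-\Omega(L)}$. The stated rate comes from taking $e^{L}=(\log\log n)^{1/2+o(1)}$, essentially the largest value that window allows.

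Finally, for even $k$, $\mathbf P(\tau\text{ even})=1/2+o(1)$ is too weak for that rate. A Gumbel limit also does not by itself give local parity balance. The paper instead uses that once a single untouched card remains, the residual waiting time is geometric with parameter $k/n$, whose parity bias is $O(k/n)$.
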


The phenomenon captured by \Cref{thm: hitting time mixing_main thm} is part of a broader theme in the study of Markov-chain mixing: in many natural chains, the cutoff time is governed by the disappearance of an explicit local obstruction. This viewpoint goes back at least to the theory of strong stationary times of Aldous-Diaconis \cite{aldous1986shuffling} and Diaconis-Fill \cite{diaconis1990strong}, where one often proves mixing by identifying a stopping time at which all initially visible information has been randomized. Our random $k$-cycle hitting time problem fits naturally into this circle of ideas.

We now give an overview of the proof of \Cref{thm: hitting time mixing_main thm}. As in Jain-Sawhney's treatment of
random transpositions, we begin by proving a fixed-time approximation for the
walk near the cutoff window. The guiding heuristic is that, at time
$\frac{n}{k}(\log n+c)$, \footnote{Throughout the paper, time is always integer-valued. Whenever an expression for time is not an integer, we implicitly take its integer part and suppress the floor notation for notational convenience.} the only visible obstruction to uniformity is the set of cards which have not yet been touched. This set should be approximately Poisson, and conditional on this set, the remaining cards should be almost uniformly shuffled on the appropriate parity class. The following definition introduces the auxiliary measure $\nu_t$ which makes this approximation precise.

\begin{defi}\label{defi: construction of nu}
Suppose we have $t\in\Z$. Set $t':=t-\frac{n\log n}{k}$, and $\gamma_{n,t}:=e^{-kt'/n}$. Let $\nu_t$ denote the measure on $\mathfrak{S}_n$ defined by the following sampling process: first, sample $M_t\in\{0,1,\ldots,n\}$ according to the distribution $\mathbf{P}[M_t=x]=\mathbf{P}(\Pois(\gamma_{n,t})=x)/\mathbf{P}(\Pois(\gamma_{n,t})\le n)$, then sample a uniformly random subset $S_t$ of size $M_t$ in $[n]$. Finally,
\begin{enumerate}
\item When $k$ is odd, sample a uniformly random element of $\mathfrak{A}_{[n]\backslash S_t}$ and view it as an element of $\mathfrak{A}_n$ by fixing all of the elements in $S_t$.
\item When $k$ is even, and $t$ is odd, sample a uniformly random element of $\mathfrak{A}_{[n]\backslash S_t}^c$ and view it as an element of $\mathfrak{A}_n^c$ by fixing all of the elements in $S_t$.
\item When $k$ is even, and $t$ is even, sample a uniformly random element of $\mathfrak{A}_{[n]\backslash S_t}$ and view it as an element of $\mathfrak{A}_n$ by fixing all of the elements in $S_t$.
\end{enumerate}
\end{defi}

The measure $\nu_t$ is designed to isolate the contribution of the untouched
cards. It first chooses the number of untouched cards according to the expected
Poisson law at time $t$, then chooses their locations uniformly, and finally
puts a uniform permutation on the remaining cards, subject only to the necessary
parity constraint. Thus $\nu_t$ is an explicitly tractable proxy for the law of
$X_t$ near the cutoff window.

Our main fixed-time input is that this proxy indeed approximates the random
$k$-cycle walk in total variation. The following theorem refines the second author's result \cite[Theorem 1.6]{shen2026k}: by restricting to the smaller range of $k$ considered here, we obtain the stronger error estimate required for the hitting-time argument.

\begin{thm}[\Cref{thm: restate of X and nu} in the text]\label{thm: X and nu}
Suppose that \(2\le k=o(n/(\log n)^4)\), and that
$t' := t-\frac{n\log n}{k}$ satisfies
$|t'|\le \frac{n}{2k}\log\log\log n$. Then, we have
$$d_{TV}(X_t,\nu_t)=o\left((\log n)^{-5/2}\right).$$
\end{thm}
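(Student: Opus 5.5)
Our plan is to compare $X_t$ and $\nu_t$ by passing through the untouched set, following the strategy of \cite[Theorem 1.6]{shen2026k} but keeping track of error terms more carefully. Let $F_t\subseteq[n]$ be the set of cards not touched by $\sigma_1,\dots,\sigma_t$. We split the total variation distance into two parts. The first compares the law of $F_t$ with the law of $S_t$. The second compares, given $F_t=S$, the conditional law of $X_t$ (which fixes $S$ pointwise) with the uniform measure on the correct parity class of $\S_{[n]\setminus S}$.

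\textbf{Step 1: the untouched set.} By exchangeability, the law of $F_t$ is invariant under relabelling $[n]$. Given $|F_t|$, the set $F_t$ is therefore uniform, so it suffices to compare $|F_t|$ with the truncated $\Pois(\gamma_{n,t})$. For fixed $j$, we have
\[
\mathbf P(\text{card } j \text{ untouched})=(1-k/n)^t=\exp\bigl(-kt/n-O(k^2t/n^2)\bigr).
\]
Since $k^2t/n^2=O(k\log n/n)=o((\log n)^{-3})$, this equals $\gamma_{n,t}/n\cdot(1+o((\log n)^{-3}))$. More generally,
\[
\mathbf P(S\subseteq F_t)=\binom{n-|S|}{k}^t\Big/\binom nk^t,
\]
so the factorial moments satisfy $\mathbf E(|F_t|)_r=(n)_r\bigl((n-r)_k/(n)_k\bigr)^t$. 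For $r\le (\log n)$ this is $\gamma_{n,t}^r(1+O(r^2k\log n/n))$. Because $\gamma_{n,t}\le(\log\log n)^{1/2}$ in our window, a standard Bonferroni/inclusion–exclusion truncation at $r\approx\log n$ gives
\[
d_{TV}\bigl(|F_t|,M_t\bigr)=O\bigl(k(\log n)^3/n\bigr)+n^{-\omega(1)}=o\bigl((\log n)^{-1}\bigr)\cdot(\log n)^{-3},
\]
which is more than enough. This is where the hypothesis $k=o(n/(\log n)^4)$ enters in its cleanest form.

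\textbf{Step 2: conditional uniformity on touched cards.} We need to show that
\[
\sum_S \mathbf P(F_t=S)\, d_{TV}\bigl(X_t\mid F_t=S,\ U_S\bigr)=o\bigl((\log n)^{-5/2}\bigr),
\]
where $U_S$ is the uniform measure on the appropriate parity class of $\S_{[n]\setminus S}$. Conditioning on $F_t=S$ is awkward. Instead, condition on the event $\{F_t\supseteq S\}$: then $X_t$ is exactly the random $k$-cycle walk on $\S_{[n]\setminus S}$ run for time $t$, and $\{F_t=S\}$ additionally requires every card of $[n]\setminus S$ to be touched. We then apply inclusion–exclusion over supersets, reducing to fixed-time estimates for walks on $m=n-|S|$ cards with $|S|\le\log n$, at time $t$ that is $\frac{m}{k}(\log m+c')$ with $c'$ of order $\log\log\log n$ shifted by $O(|S|/m\cdot\log n)$. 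For these walks we use the Diaconis–Shahshahani $\ell^2$ bound restricted to characters. The key quantity is
\[
\sum_{\lambda}d_\lambda^2\,\bigl|\chi_\lambda(k\text{-cycle})/d_\lambda\bigr|^{2t}
\]
over $\lambda$ that are not the trivial or sign representation and not near-trivial. The near-trivial representations $\lambda=(m-j,\dots)$ with small $j$ are exactly those that encode fixed points, and these are handled by the inclusion–exclusion itself. For the remaining $\lambda$ we invoke character ratio bounds of Hough/Larsen–Shalev type, $|\chi_\lambda(\sigma)/d_\lambda|\le (\max(\lambda_1,\lambda_1')/m)^{k(1-o(1))}$, valid for $k=o(m)$. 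Lower-order corrections of size $k^2/n$ in the exponent are where the $(\log n)^4$ restriction is used again.

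\textbf{Main obstacle.} The hardest step is Step 2. There we must make the near-trivial characters cancel precisely against the Poissonized fixed-point structure, with an error of $o((\log n)^{-5/2})$ rather than merely $o(1)$. First, we would write the conditional law of $X_t$ restricted to $[n]\setminus S$ through its Fourier expansion. Next, we would show that the contributions of $\lambda$ with $m-\lambda_1=j\ge1$ small reproduce, term by term, the inclusion–exclusion expansion of "no extra untouched card", up to relative error $O(jk\log n/n+j^2/n)$. The tail $j\ge(\log n)$, together with the non-near-trivial $\lambda$, is then bounded crudely by $\exp(-c\log n\log\log n)$-type estimates. Summing these errors over $|S|$ weighted by Poisson probabilities should yield the claimed $o((\log n)^{-5/2})$.
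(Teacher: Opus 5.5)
Your initial reduction, $d_{TV}(X_t,\nu_t)\le d_{TV}(|F_t|,M_t)+\sum_S\mathbf P(F_t=S)\,d_{TV}\bigl(\mathcal L(X_t\mid F_t=S),U_S\bigr)$, is valid, and Step 1 is fine in substance. One arithmetic point there: $k(\log n)^3/n$ is only $o((\log n)^{-1})$, not $o((\log n)^{-4})$. You need to weight the factorial-moment errors by $(2\gamma_{n,t})^r/r!$, which gives $O(k(\log n)^{1+o(1)}/n)$, and that does suffice.

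The genuine gap is Step 2. It carries essentially all of the difficulty and is only described, not proved. It asks for conditional uniformity given the exact untouched set, which is precisely \Cref{prop:key}, with averaged error $o((\log n)^{-5/2})$. In the paper the logic runs the other way. \Cref{prop:key} is deduced from the present theorem: the theorem is applied to the walks on $[n]\setminus T$ in \Cref{lem: conditioned}, and the pieces are reassembled by inclusion--exclusion. Even then the error obtained is only $(\log n)^{-1/2+o(1)}$. So you have replaced a statement about the law of the permutation alone by a strictly stronger joint statement about the pair (untouched set, permutation), and your plan gives no independent route to it.

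Concretely, the conditional law on $\mathfrak S_{[n]\setminus S}$ is the normalized alternating sum $\sum_{T\supseteq S}(-1)^{|T|-|S|}\mathbf P(\mathcal G(T))\,\mathcal L(\text{walk on }[n]\setminus T)$. Its Fourier coefficient at $\lambda\vdash m=n-|S|$ involves $\sum_{\mu\vdash n-|T|}f^{\lambda/\mu}d_\mu s_\mu(k)^t$. The individual levels $j=|T|-|S|$ are not small in Plancherel norm: already the $j=0$ term, the unconditioned walk on $m$ cards, is at squared distance of order $e^{\gamma_{n,t}^2}-1$ from uniform. So all smallness must come from exact cancellation across $j$ in every near-trivial coefficient, uniformly for $j$ up to about $\log n$. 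You would also need a truncation of the alternating sum compatible with the $\ell^2$ bound, since Bonferroni is a pointwise inequality for indicators. Your sentence ``reproduce, term by term, the inclusion--exclusion expansion'' is exactly this unproved content.

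The idea you are missing is that no conditioning is needed. The measure $\nu_t$ is itself a class function on $\mathfrak S_n$: a Poisson mixture of the measures $\xi_M$ (or $\xi_M^c$). By \Cref{thm :zetaM_Fourier_An} its Fourier transform is explicit: $\widehat{\xi_M}(\lambda)=\binom{M}{n-\lambda_1}\frac{d_{\lambda^*}}{d_\lambda}\mathrm{Id}$ for $\lambda_1\ge n-M$, the analogous formula (with a sign for $\xi_M^c$) when $l(\lambda)\ge n-M$, and $0$ otherwise. The paper therefore applies Cauchy--Schwarz and Plancherel directly to $P_n^{*t}$ minus a Poisson-truncated $\nu_t$:
\begin{itemize}
\item partitions outside $\mathcal L$ contribute $n^{-\omega(1)}$ by \Cref{thm: removal};
\item for $\lambda\in\mathcal L_1$ with $r=n-\lambda_1<\frac12\log n$, both transforms equal $e^{-rkt/n}\bigl(1+O(r(k+r)\log n/n)\bigr)$;
\item larger $r$ are negligible.
\end{itemize}
So the fixed-point structure you are trying to cancel by hand is matched automatically, and the total squared error is $o((\log n)^{-5})$.
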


There is currently no known characterization of a strong stationary time for
$k$-cycle shuffling on $n$ cards. In particular, the time at which all
cards have been touched is not itself a strong stationary time. Nevertheless,
one expects that at this time the distribution of the deck is already close to
uniform, and that, as $n\to\infty$, the distribution should converge to the
uniform distribution in total variation distance. Our next goal is to strengthen
the preceding theorem into a hitting-time mixing result. To achieve this, we introduce a marking scheme starting from a time
\begin{equation}\label{eq: starting time of marking scheme}
    t^*= \frac{n\log n}{k}
    -\frac{n}{2k}\log\log\log n+\frac{4n}{k}\log\log\log\log n .
\end{equation}
By the choice of $t^*$, the set of cards not yet touched by time $t^*$ is small with high probability. We mark all cards that have already been touched, and then continue the process with a carefully chosen rule that marks the remaining
cards one by one. The marking rule is designed so that the time needed to mark all cards is close to the time needed to touch all cards.

The passage from fixed-time to hitting time follows the philosophy introduced by Jain-Sawhney for the random transposition walk. Their key insight is that a sufficiently sharp fixed-time approximation can be bootstrapped into an
``approximate sufficient statistic'' statement: near the cutoff window, once one conditions on the set of untouched cards, the remaining permutation is already close to uniform. Starting from this conditional uniformity, one can then run a
short marking procedure which mimics a strong stationary time only for the few cards that remain unmarked.

We follow this strategy, but the random $k$-cycle walk introduces new difficulties. First, a single update may involve many unmarked cards, so the
marking rule must distinguish the useful event that exactly one new card is absorbed from the exceptional event that several unmarked cards appear in the same $k$-cycle. Second, the parity of a $k$-cycle depends on $k$. When $k$ is
odd, the walk stays in $\A_n$, while when $k$ is even it alternates between $\A_n$ and $\A_n^c$. Thus the artificial marking procedure cannot simply insert idle moves, as in the transposition case; it must preserve the correct parity at
each time.

The main novelty is a parity-compatible marking scheme which resolves these two issues simultaneously. Heuristically, the scheme adds artificial one-card marking moves so that each remaining unmarked card is discovered at the correct rate, while keeping the conditional law on the marked cards uniform on the appropriate parity class. In the even-$k$ case this requires parity-correcting transpositions supported entirely on the marked set. These transpositions change
the parity without affecting which unmarked card is being marked. The proof then compares this artificial process with the genuine $k$-cycle walk and shows that the accumulated discrepancy before all cards are marked is negligible.

\subsection{Future directions.}

The range $2\leq k=o(n/(\log n)^4)$ in
\Cref{thm: hitting time mixing_main thm} should not be interpreted as the
conjectural endpoint of hitting-time mixing for random $k$-cycles. Rather,
this is the regime in which our fixed-time approximation theorem is strong
enough to be combined with the Jain-Sawhney hitting-time argument. We expect
that the same qualitative phenomenon should persist for much larger values of
$k$: once every card has been touched, there should be no further obstruction
to mixing, apart from the necessary parity constraint. The following theorem provides
evidence for this expectation in the opposite, large-cycle regime.

\begin{thm}\label{thm: large cycle}
Suppose $n=\omega(1)$, $k\leq n-1$, and $k\geq n-o(n^{1/2})$. Then, we have
$$
d_{TV}(X_\tau,U_{\mathfrak A_n})=o(1).
$$
\end{thm}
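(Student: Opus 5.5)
The plan is to show that, in this regime, $\tau=2$ with high probability, and that $X_2$ is close to $U_{\A_n}$. This also explains why the target is $U_{\A_n}$ for both parities of $k$: a product of two $k$-cycles is always even. Write $m:=n-k$, so $1\le m=o(n^{1/2})$. Each chosen $k$-cycle misses exactly $m$ cards, so $\tau\ge 2$ always. Moreover $\tau=2$ exactly when the two missed $m$-sets $F_1,F_2$ are disjoint. They are independent uniform $m$-subsets of $[n]$, so
$$\mathbf P(\tau\neq 2)=\mathbf P(F_1\cap F_2\neq\emptyset)\le \frac{m^2}{n}=o(1).$$
Since $X_\tau=X_2$ on $\{\tau=2\}$, conditioning on an event of probability $1-o(1)$ costs $o(1)$ in total variation. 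It therefore suffices to prove $d_{TV}(X_2,U_{\A_n})=o(1)$, where $X_2\sim P_n^{*2}$ and $P_n$ is the uniform measure on $k$-cycles.

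For this I would use the Diaconis--Shahshahani upper bound lemma on $\A_n$, or equivalently on $\S_n$ restricted to even permutations, noting that $P_n^{*2}$ is supported on $\A_n$. Let $r_\lambda=\chi_\lambda(C_k)/d_\lambda$, where $C_k$ is the class of $k$-cycles. The bound reads
$$4\,d_{TV}(P_n^{*2},U_{\A_n})^2\le \sum_{\lambda\neq (n),(1^n)} d_\lambda^2\,|r_\lambda|^4=\sum_{\lambda\neq (n),(1^n)}\frac{\chi_\lambda(C_k)^4}{d_\lambda^2}.$$
Note that $\lambda$ and $\lambda'$ give equal contributions, and the sign character is excluded because $P_n^{*2}$ lives on $\A_n$. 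The character values come from Murnaghan--Nakayama, removing the $k$-cycle first. Then $\chi_\lambda(C_k)=\sum_{\xi}(-1)^{\mathrm{ht}(\xi)}f^{\lambda\setminus\xi}$, where $\xi$ ranges over the rim hooks of length $k$ in $\lambda$, and $\lambda\setminus\xi\vdash m$. This gives three facts:
\begin{itemize}
\item $\chi_\lambda(C_k)=0$ unless $\lambda$ contains a $k$-rim hook, which forces $\lambda$ to be a hook shape with at most $m$ extra boxes;
\item the number of such $\lambda$ is at most polynomial in $n$ times $p(m)$;
\item $|\chi_\lambda(C_k)|\le (\#\text{$k$-rim hooks})\cdot\max_{\mu\vdash m}f^\mu\le (m+1)\sqrt{m!}$.
\end{itemize}

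I would split the sum according to the size $j$ of $\lambda$ outside its first row, or by symmetry outside its first column. For $\lambda=(n-1,1)$ we have $\chi_\lambda(C_k)=m-1$ and $d_\lambda=n-1$. Its contribution is therefore $(m-1)^4/(n-1)^2=o(1)$, and this term is exactly where the hypothesis $m=o(n^{1/2})$ is sharp. For general $\lambda=(n-j,\bar\lambda)$ with $j$ small, the removed rim hook must run along the first row. So $\chi_\lambda(C_k)$ is a signed sum of $f^{\mu}$ with $\mu\vdash m$, giving $|\chi_\lambda(C_k)|\le C^j m^{j}$ roughly, while $d_\lambda\ge \binom{n-j}{j}f^{\bar\lambda}\gtrsim n^j/j!$. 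The contribution of level $j$ is then $\lesssim p(j)\,(C m^2/n)^{2j}(j!)^{2}$. Summing over $j$ up to a slowly growing cutoff gives $o(1)$.

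For the remaining $\lambda$, whose first row and first column both have length at most $n-J$, the shape is a "fat hook" and $d_\lambda$ is at least $\binom{n-1}{J}$ up to lower-order factors. Against the bound $(m+1)^4(m!)^2$ on $\chi_\lambda^4$ and a count of $\mathrm{poly}(n)p(m)$ shapes, this needs $d_\lambda\gg \mathrm{poly}(n)\,m!\,e^{O(\sqrt m)}$. That holds once $J$ is a large multiple of $m$, since $\binom{n}{J}\ge (n/J)^J$ and $m\le\sqrt n$. The shapes with $J$ between a slowly growing cutoff and $Cm$ need the more careful level-$j$ estimate above, extended to $j\le Cm$, with $\sum_j (C'jm^2/n)^{j}$-type bounds.

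I expect this middle range to be the main obstacle: getting a uniform bound on $|\chi_\lambda(C_k)|/\sqrt{d_\lambda}$ for shapes with $j$ comparable to $m$ boxes off the first row or column. I would try first to bound $\chi_\lambda(C_k)$ by $2f^{\bar\lambda\text{-type}}$ terms, each at most $\binom{m}{j}f^{\bar\lambda}$, and compare with $d_\lambda\approx\binom{n}{j}f^{\bar\lambda}$. This yields a ratio of order $(m/n)^{j}\cdot\sqrt{\binom{n}{j}}$, i.e. $(m^2/n)^{j/2}$ up to $j!$ factors, which is summable. Combining the three ranges gives $d_{TV}(X_2,U_{\A_n})=o(1)$, and hence the theorem.
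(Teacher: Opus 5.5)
Your reduction to the event $\{\tau=2\}$ and the Plancherel bound $4\,d_{TV}(P_n^{*2},U_{\mathfrak A_n})^2\le\sum_{\lambda\ne(n),(1^n)}\chi_\lambda(C_k)^4/d_\lambda^2$ are exactly what the paper does. Your treatment of boundedly many boxes off the first row (or column), and of shapes with both first row and first column at most $n-Cm$, is also fine. The gap is the middle range, which you flag but do not close.

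Your proposed bound $|\chi_\lambda(C_k)|\lesssim\binom{m}{j}f^{\bar\lambda}$ is only justified when the $k$-rim hook lies entirely in the first row. That is the case where the Murnaghan--Nakayama core is $\tilde\lambda=(m-j,\bar\lambda)$. Once $j>m/2$ the hook can turn down, and for $j>m$ it must. If it ends at the bottom of column $b$ in row $L\ge 2$, then $b=L+m-j$ and $\tilde\lambda=(\bar\lambda_1-1,\dots,\bar\lambda_{L-1}-1,\,b-1,\,\bar\lambda_L,\bar\lambda_{L+1},\dots)$, which is not of the form $(m-j,\bar\lambda)$.

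For $j>m$ your inequality is simply false: $\binom{m}{j}=0$, yet $\chi_{(n-m-1,\,m+1)}(C_k)=-1$. You could rescue $j\ge m$ with the crude bound $|\chi_\lambda|\le\sqrt{m!}$, which gives a level-$j$ contribution of order $p(j)(j^2/n)^{2j}$. For $m/2<j<m$, however, that crude bound is too weak once $m\gg n^{1/3}$. For the single term $\lambda=(n-j,j)$ with $j\approx m/2$ (a turning-hook shape with core $(m/2,m/2)$), it only gives something of order $(cm^3/n)^{m}$. So, as written, nothing controls the turning-hook shapes with $m/2<j<m$, and this is where the argument stops being a proof.

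The missing idea, which is the paper's route, is to compare through the core rather than through $(j,\bar\lambda)$.

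\begin{itemize}
\item Since $k>n/2$ there is at most one $k$-rim hook (Lulov--Pak). Hence $|\chi_\lambda(C_k)|=f^{\tilde\lambda}$ exactly, and the sum can be indexed by $\tilde\lambda\vdash m$.
\item From the formula above, $\tilde\lambda^*\subseteq\bar\lambda$, where ${}^*$ deletes the first row. Equality holds for the first-row extension, and $|\bar\lambda|\ge|\tilde\lambda^*|+1$ whenever the hook turns.
\item Therefore $f^{\tilde\lambda}\le\binom{m}{m-\tilde\lambda_1}f^{\tilde\lambda^*}\le\binom{m}{m-\tilde\lambda_1}f^{\bar\lambda}$.
\item Compare this with $d_\lambda\gtrsim\binom{n}{|\bar\lambda|}f^{\bar\lambda}$. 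If the hook runs far down the first column, first trim it just past its first downward turn, as the paper does. The turning extensions then gain an extra factor $O(m/n)$ in $f^{\tilde\lambda}/d_\lambda$, and this pays for the $O(n)$ possible turning points.
\item This yields $\sum_\lambda (f^{\tilde\lambda}/d_\lambda)^2=O\bigl((3m/n)^{2(m-\tilde\lambda_1)}\bigr)$ for each fixed core.
\item Combined with $\sum_{\tilde\lambda_1=l}(f^{\tilde\lambda})^2\le m^{2(m-l)}/(m-l)!$, summing over $l\le m-1$ gives $O\bigl(\exp((3m^2/n)^2)-1\bigr)=o(1)$.
\item The cores $(m)$ and $(1^m)$ are treated separately, because their first-row (resp. first-column) extension is the excluded $(n)$ (resp. $(1^n)$). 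They contribute $O(m^2/n)$.
\end{itemize}

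This handles all $j$ uniformly and makes your three-range split unnecessary.
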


Indeed, as we will see in \Cref{sec: large cycles}, \Cref{thm: large cycle} corresponds to the situation where two cycles
are enough to complete the hitting with high probability. After the first
$k$-cycle, only $n-k=o(n^{1/2})$ cards remain untouched, and a second
independent $k$-cycle covers all of them with probability tending to one. Thus
\Cref{thm: large cycle} shows that, in this regime, the walk is already mixed at the moment
when the second cycle completes the hitting of all cards.

The endpoint $k=n$, however, is degenerate and must be excluded. In that case
the walk hits all cards after a single step, but the distribution at that time
is supported on the conjugacy class of $n$-cycles, and hence is certainly not
close to uniform. With this exception in mind, it is natural to formulate the
following conjecture.

\begin{conj}\label{conj: hitting time mixing all k}
Suppose $n=\omega(1)$ and $2\leq k\leq n-1$. Then the following hold. If $k$ is odd, then
$$
d_{TV}(X_\tau,U_{\mathfrak A_n})=o(1).
$$
If $k$ is even, then 
$$
d_{TV}\!\left(X_\tau,\,\mathbf P(\tau \text{ is even})\,U_{\mathfrak A_n}+\mathbf P(\tau \text{ is odd})\,U_{\mathfrak A_n^c}
\right)=o(1).
$$
\end{conj}

Let us explain the form of the conjecture in the even-\(k\) case. In fact, in \Cref{thm: hitting time mixing_main thm}, when \(k\) is even, we first prove the stronger intermediate statement that \(X_\tau\) is close to the parity mixture
$$
\mathbf P(\tau \text{ is even})\,U_{\mathfrak A_n}+
\mathbf P(\tau \text{ is odd})\,U_{\mathfrak A_n^c}.
$$
We then show, in the regime \(k=o(n/(\log n)^4)\), that the hitting time is asymptotically equally likely to be even or odd, which allows us to replace this mixture by \(U_{\mathfrak S_n}\). For general \(k\), however, the assertion that \(\tau\) has asymptotically balanced parity is no longer expected to hold. Thus the natural even-\(k\) conjecture is convergence to the parity mixture determined by the actual distribution of \(\tau\), rather than necessarily to \(U_{\mathfrak S_n}\).

It would also be interesting to study hitting-time mixing for random walks
generated by more general class measures on $\mathfrak S_n$. In the present
paper the driving measure is supported on a single conjugacy class, namely the
class of $k$-cycles, but the Fourier-analytic framework applies equally well
to any class measure. The main difficulty is then to understand which
irreducible representations make the dominant contribution near the hitting
time, and how the touching process interacts with the representation-theoretic
profile of the chosen class measure. Recent work of Olesker-Taylor, Teyssier,
and Th\'evenin \cite{olesker2025sharp} develops sharp character bounds
and cutoff results for broad families of conjugacy-class walks on
$\mathfrak S_n$, and provides useful representation-theoretic background for
such a direction.


\subsection{Notation}

We write \([n]:=\{1,\ldots,n\}\). For a subset \(S\subseteq[n]\), we write \(\mathfrak S_S\) for the group of permutations of \(S\). In particular, when \(S=[n]\setminus T\), we also write \(\mathfrak S_{[n]\setminus T}\) and regard it naturally as a subgroup of \(\mathfrak S_n\) by fixing every point of \(T\). The same convention applies to \(\mathfrak A_S\) and \(\mathfrak A_S^c\).

For a probability measure \(\mu\), we write \(X\sim\mu\) to mean that \(X\) has law \(\mu\), and we write \(\mathcal L(X)\) for the law of a random variable \(X\). The total variation distance between two probability measures \(\mu\) and \(\nu\) on a finite set is denoted by
$$
d_{\mathrm{TV}}(\mu,\nu)
:=
\frac12\sum_x |\mu(x)-\nu(x)|.
$$
When no confusion can arise, we also write \(d_{\mathrm{TV}}(X,Y)\) for \(d_{\mathrm{TV}}(\mathcal L(X),\mathcal L(Y))\). We use \(\mathbf P\) and $\E$ for probability and expectation.

For a permutation \(\sigma\in\mathfrak S_n\), we write
$$
\operatorname{Fix}(\sigma):=\{i\in[n]:\sigma(i)=i\}
$$
for its set of fixed points. If \(a\) is a nonnegative integer and \(b\geq 0\), we use the falling factorial notation
$$
(a)_b:=a(a-1)\cdots(a-b+1),
$$
with the convention \((a)_0=1\).

We write \(f=O(g)\) if \(|f|\le C|g|\) for some absolute constant \(C>0\), \(f=o(g)\) if \(f/g\to0\), and \(f=\omega(g)\) if \(f/g\to+\infty\). All asymptotic notation is understood in absolute value. Moreover, the implicit constants are taken along the given sequence of positive integer inputs; in particular, they are not intended uniformly over all such inputs, but only along the sequence under consideration.

\subsection{Outline of the paper} 

The remainder of the paper is organized as follows. In \Cref{sec: Jain-Sawhney}, we prove the fixed-time approximation theorem, \Cref{thm: X and nu}. In \Cref{sec: fixed to hitting}, we use this approximation to prove the hitting-time mixing result, \Cref{thm: hitting time mixing_main thm}. Finally, in \Cref{sec: large cycles}, we prove \Cref{thm: large cycle} for the regime where \(k\) is close to \(n\).

\section{Approximating the distribution at a fixed time}\label{sec: Jain-Sawhney}

The goal of this section is to prove the following theorem, which provides a detailed reformulation of \Cref{thm: X and nu}.

\begin{thm}\label{thm: restate of X and nu}
Let $n_1,n_2,\ldots$,$k_1,k_2,\ldots$, and $t_1,t_2,\ldots$ be sequences of positive integers such that:
\begin{enumerate}
\item $\lim_{N\rightarrow\infty}n_N=\infty$.
\item $2\le k_N\le n_N$ for all $N\ge 1$, and $\lim_{N\rightarrow\infty}\frac{k_N}{n_N/(\log n_N)^4}=0$.
\item $|t_N-\frac{n_N\log n_N}{k_N}|\le \frac{n_N}{2k_N}\log\log\log n_N$ for all $N\ge 1$. 
\end{enumerate}
Define the measures $X_{t_N},\nu_{t_N}$ over $\mathfrak{S}_{n_N}$ the same way as in \Cref{sec: Intro}. Then, we have 
$$\lim_{N\rightarrow\infty}d_{TV}(X_{t_N},\nu_{t_N})\cdot(\log n_N)^{5/2}=0.$$
\end{thm}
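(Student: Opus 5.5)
We follow the Jain--Sawhney strategy and compare $X_t$ and $\nu_t$ through their Fourier transforms. Both measures are class functions on $\mathfrak S_n$ supported on the same parity coset. For class functions $\mu,\nu$ the Cauchy--Schwarz/Plancherel bound gives
$$
4\,d_{TV}(\mu,\nu)^2\le \sum_{\lambda} d_\lambda^2\,\bigl|\hat\mu(\lambda)-\hat\nu(\lambda)\bigr|^2 ,
$$
where $\hat\mu(\lambda)$ denotes the scalar by which $\mu$ acts on the irreducible $V_\lambda$. For $X_t$ this scalar is $r_\lambda^t$, with $r_\lambda:=\chi_\lambda(k\text{-cycle})/d_\lambda$. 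The trivial and sign representations agree on both sides by the parity construction of $\nu_t$.

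\textbf{Step 1: compute $\hat\nu_t(\lambda)$.} Conditional on $S_t$ with $|S_t|=m$, the measure is uniform on $\mathfrak A_{[n]\setminus S_t}$ or on its complement. By branching from $\mathfrak S_n$ to $\mathfrak S_{n-m}$, its Fourier coefficient at $\lambda$ is $\frac{1}{d_\lambda}\bigl(\#\{\text{paths from }\lambda\text{ to }(n-m)\}\pm\#\{\text{paths to }(1^{n-m})\}\bigr)$. The first count equals the number of standard skew tableaux of shape $\lambda/(n-m)$. This is nonzero only if $\lambda_1\ge n-m$, i.e.\ when $n-\lambda_1\le m$ or $\lambda'_1\ge n-m$. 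Averaging over the truncated Poisson law of $M_t$ gives, for $\lambda=(n-j,\mu)$,
$$
\hat\nu_t(\lambda)\approx \frac{d_\mu}{d_\lambda}\,\mathbf E\!\left[(M_t)_j\right]\frac{1}{j!}\cdot(\text{explicit factor})\approx \frac{d_\mu}{j!}\cdot\frac{\gamma^j\,j!}{d_\lambda}\cdot\frac{\cdots}{\cdots}.
$$
Heuristically this matches $\hat\nu_t(\lambda)\approx d_\mu\gamma^j/(d_\lambda\cdot\text{const})\cdot\binom{n}{j}$-type expressions. We record the answer in the form used by Teyssier and Nestoridi--Olesker-Taylor: the leading-order profile $\hat\nu_t(\lambda)\approx \gamma^{j}\,d_\mu/(j!\,d_\lambda)\cdot n^j$. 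The conjugate (sign-twisted) partitions are handled identically.

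\textbf{Step 2: character asymptotics.} For $\lambda$ with $j=n-\lambda_1$ small (say $j\le (\log n)^{2}$), we use Frobenius/Ingram-type formulas for $k$-cycle characters, as in \cite{shen2026k} and Hough. These give
$$
r_\lambda=\Bigl(1-\tfrac{j}{n}\Bigr)^{k}\bigl(1+O(jk/n^2\cdot(j+k))\bigr)+\text{(contribution of } \mu\text{)} .
$$
Consequently $r_\lambda^t=n^{-j}e^{-jkt'/n}\,(\text{content correction})=\gamma^j n^{-j}(1+\varepsilon_\lambda)$. The relative error $\varepsilon_\lambda$ is of size $O\bigl(t(jk^2/n^2+j^2k/n^2)\bigr)=O\bigl(j^2 k\log n/n+jk\log n/n\bigr)$, up to $\log\log\log$ factors from $|t'|$. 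Since $d_\lambda\approx n^j d_\mu/j!$, the term $d_\lambda^2|r_\lambda^t-\hat\nu_t(\lambda)|^2$ is roughly $(\gamma^{j}d_\mu/j!)^2\varepsilon_\lambda^2$. Summing over $\mu\vdash j$ gives $\sum_j \gamma^{2j}\varepsilon_j^2/j!$. With $\gamma\le (\log\log n)^{1/2}$ and $\varepsilon_j\lesssim j^2 k\log n/n$, the hypothesis $k=o(n/(\log n)^4)$ makes this sum $o((\log n)^{-5})$. Hence $d_{TV}=o((\log n)^{-5/2})$. The same must also be checked for $\lambda'$ (the sign twist).

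\textbf{Step 3: the tail.} For $\lambda$ with $j=n-\max(\lambda_1,\lambda'_1)>(\log n)^2$, the coefficient $\hat\nu_t(\lambda)$ is negligible, since $\mathbf P(M_t\ge j)$ is superexponentially small. It therefore suffices to bound $\sum d_\lambda^2 r_\lambda^{2t}$ over this range. We use the Larsen--Shalev / Müller--Schlage-Puchta character bounds already employed in \cite{shen2026k}, namely $|r_\lambda|\le (\max(\lambda_1,\lambda_1')/n)^{k(1-o(1))}$-type estimates. These give, at $t\ge \frac{n}{k}(\log n-\frac12\log\log\log n)$, a total of $\sum_{j>(\log n)^2} n^{2j}/j!\cdot n^{-2j(1-o(1))}e^{O(j\log\log\log\log n)}$, which is superpolynomially small.

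The \textbf{main obstacle} is Step 2. We need a uniform second-order expansion of $r_\lambda$ for $k$ growing up to $n/(\log n)^4$, valid for all $\mu\vdash j$ with $j$ up to polylog. Crucially, the first-order term must match $\hat\nu_t$ exactly, so that only the $O(jk\log n/n)$-relative error survives. The analogous error in the coarser theorem of \cite{shen2026k} was only $o(1)$. To get the power $(\log n)^{-5/2}$, I would first try to expand $\log r_\lambda$ via the Frobenius formula $r_\lambda=\frac{1}{(n)_k}\sum_i \frac{(\text{shifted }\lambda_i)_k}{\cdots}$ in the contents of $\mu$. I would then show that the $\mu$-dependent corrections are $O(jk/n)$ per step times $t$, accepting loss factors of $(\log n)^{O(1)}$, which the $(\log n)^4$ margin is meant to absorb.
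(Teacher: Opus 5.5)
Your overall route is the paper's: Cauchy--Schwarz and Plancherel, the branching (Kostka) evaluation $\widehat{\nu}_t(\lambda)=\frac{d_{\lambda^*}}{d_\lambda}\,\mathbf E\binom{M_t}{n-\lambda_1}$ for long first row (the paper quotes this from \cite[Theorem 2.16]{shen2026k}), a comparison with $s_\lambda(k)^t$ at relative error $O(r(k+r)\log n/n)$, a separate tail estimate, and conjugation for the long-column side.

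The step you call the main obstacle is not new work in the paper. It is \Cref{prop: small r}, imported from \cite[Proposition 2.6]{shen2026k}, which already contains the explicit second-order term
\[
s_\lambda(k)^t=\exp\Bigl(-\frac{rkt}{n}-\frac{r(k+r)\log n}{2n}\bigl(1+o(1)+O(1/k)\bigr)\Bigr),\qquad 1\le r=n-\lambda_1\le \log n .
\]
So you do not need a fresh Frobenius expansion. Once $k=o(n/(\log n)^4)$, the sharper rate is bookkeeping. The tail is also imported, as \Cref{thm: removal}, for every $\lambda\notin\L$.

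There are two slips in your write-up. First, the recorded profile $\gamma^j d_\mu n^j/(j!\,d_\lambda)$ has a spurious factor $n^j$. The correct value $\gamma^j d_\mu/(j!\,d_\lambda)\approx \gamma^j n^{-j}$ is what your Step 2 actually uses. Second, $t(jk^2+j^2k)/n^2=O((jk+j^2)\log n/n)$, not $O(j^2k\log n/n)$.

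Where you depart from the paper, the argument does not close as written.
\begin{itemize}
\item Cutting at $j=(\log n)^2$ forces you to have the precise expansion uniformly for $j\le(\log n)^2$. That is outside the range $r\le\log n$ in which it is available.
\item Your Step 3 fails just above the cut. If your character bound has exponent $k(1-\delta)$, your estimate for the layer $\lambda_1=n-j$ is about $\exp(2j\delta\log n+j\log\log\log n)/j!$. At $j\approx(\log n)^2$ this is small only if $\delta\lesssim \log\log n/\log n$, and an unquantified $o(1)$ does not give that.
\end{itemize}
The paper avoids both issues by cutting at $\log n$:
\begin{itemize}
\item It truncates the Poisson law at $\log n$, at cost $n^{-\omega(1)}$, so the comparison measure has vanishing Fourier coefficients off $\L$. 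It then applies \Cref{thm: removal} there.
\item For $\frac12\log n\le r\le \log n$ it uses only the crude bound $|\alpha_\lambda|\le 3e^{-rkt/n}$ together with $\sum_{\lambda_1=n-r}d_\lambda^2\le n^{2r}/r!$. This gives $O((\log\log n)^r/r!)=n^{-\omega(1)}$.
\item It uses the precise expansion only for $r<\frac12\log n$.
\end{itemize}

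One caveat applies to your sketch and to the paper alike. At the left end of the window $\gamma^2=\log\log n$, so $\sum_r r^m\gamma^{2r}/r!\asymp(\log n)(\log\log n)^m$, not $O(\log n)$ as the paper asserts. The main sum is therefore $O\bigl((k^2(\log\log n)^2+(\log\log n)^4)(\log n)^3/n^2\bigr)$. This is $o((\log n)^{-5})$ for all admissible $k$ only if $k=o(n/((\log n)^4\log\log n))$. The loss is a single $\log\log n$ factor in $d_{TV}$, which is harmless for the later hitting-time argument.
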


For the rest of the proof, for notational ease, we will drop off the subscript and simply write $k,n$ for $k_N,n_N$. Now, let us recall necessary background from representation theory.

\subsection{Nonabelian Fourier transform and related notations}

Given a finite group $G$, we let $\widehat G$ denote the set of irreducible representations. We define convolution $f_1,f_2:G\rightarrow\C$ to be
$$(f_1*f_2)(z):=\sum_{x\in G}f_1(x)f_2(x^{-1}z).$$
For $\rho\in \widehat G$, let $d_\rho$ denote the dimension of $\rho$. The nonabelian Fourier transform for a function $f:G\rightarrow\C$ is the map $\widehat f$ given by
$$\widehat f(\rho):=\sum_{g\in G}f(g)\rho(g)\in\Mat_{d_\rho}(\C),\quad\forall \rho\in\widehat G.$$
Then, for all $f_1,f_2:G\rightarrow\C$ and $\rho\in\widehat G$, we have
\begin{equation}\label{eq: convolution to multiplication}
\widehat{f_1*f_2}(\rho)=\widehat f_1(\rho)\cdot\widehat f_2(\rho).
\end{equation}
In this paper, we always take $G=\mathfrak{S}_n$. Following the standard construction of Specht modules, we can identify elements in $\widehat{\mathfrak{S}}_n$ with (positive integer) partitions of $n$. From now on, we use $\l\vdash n$ to denote that $\l$ is a partition of $n$. Given a partition $\l=(\l_1,\ldots,\l_j)$ with $\l_1\ge\cdots\ge\l_j$, we let $\l'$ denote the conjugate partition and $\l^*=(\l_2,\ldots,\l_j)$ be the partition of $n-\l_1$ obtained by truncating $\l$. Denote by $l(\lambda)$ the number of parts of $\lambda$, and
$$
\lambda^\bullet:=\bigl((\lambda')^*\bigr)'\,,
$$
which is obtained from $\lambda$ by deleting its first column. 

Let
\begin{equation}\label{eq: long first row and column partitions}
\mathcal L \ :=\ \Bigl\{\lambda\vdash n:\ \lambda_1\ge n-\log n\ \ \text{or}\ \ l(\l)\ge n-\log n\Bigr\}
\end{equation}
be the set of partitions with long first row or long first column. It is clear that $\mathcal L=\mathcal L_1\bigsqcup \mathcal L_2$, where
$$\mathcal L_1:=\ \Bigl\{\lambda\vdash n:\ \lambda_1\ge n-\log n\Bigr\},\quad\mathcal L_2:=\ \Bigl\{\lambda\vdash n:\ l(\l)\ge n-\log n\Bigr\}.$$
Moreover, under the conjugation map, the partitions in $\mathcal{L}_1$ have one-to-one correspondence with the partitions in $\mathcal{L}_2$.  

Probability measures on $\mathfrak{S}_n$ will be viewed as real-value functions. In our applications, we will furthermore restrict attention to functions $f$ which are class functions, i.e., functions that are constants on conjugacy classes. 
\begin{lemma}[Schur's lemma]
Suppose $f:\mathfrak{S}_n\rightarrow \C$ is a class function. Then, we have
$$\widehat f(\lambda)=\frac{\sum_{g\in G}f(g)\chi_\lambda(g)}{d_\lambda}\Id_{d_\lambda},$$
where $\chi_\lambda$ is the character corresponding to $\lambda\vdash n$.
\end{lemma}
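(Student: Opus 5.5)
We prove Schur's lemma in the present setting directly, in two steps: first show that $\widehat f(\lambda)$ commutes with the whole representation, then identify the resulting scalar by taking traces. Throughout, $\rho_\lambda$ denotes the irreducible (Specht) representation attached to $\lambda\vdash n$, so that $\widehat f(\lambda)=\sum_{g}f(g)\rho_\lambda(g)$.

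\textbf{Step 1: $\widehat f(\lambda)$ commutes with $\rho_\lambda$.} Fix $h\in\mathfrak S_n$. Then
$$\rho_\lambda(h)\,\widehat f(\lambda)\,\rho_\lambda(h)^{-1}=\sum_{g}f(g)\rho_\lambda(hgh^{-1}).$$
Substitute $g'=hgh^{-1}$. This map is a bijection of the group, and since $f$ is a class function, $f(h^{-1}g'h)=f(g')$. Hence the right-hand side equals $\widehat f(\lambda)$. So $\widehat f(\lambda)$ lies in the commutant of the representation $\rho_\lambda$.

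\textbf{Step 2: the commutant consists of scalars.} The representation $\rho_\lambda$ is irreducible over $\C$, so the classical Schur lemma applies: any matrix $A$ commuting with all $\rho_\lambda(h)$ is scalar. The argument is short. Since $\C$ is algebraically closed, $A$ has an eigenvalue $c$. The eigenspace $\ker(A-c\Id)$ is nonzero and invariant under every $\rho_\lambda(h)$. By irreducibility this eigenspace is the whole space, so $A=c\Id_{d_\lambda}$.

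\textbf{Step 3: identify the scalar.} Write $\widehat f(\lambda)=c\,\Id_{d_\lambda}$ and take traces. Using $\Tr\rho_\lambda(g)=\chi_\lambda(g)$, we get
$$c\,d_\lambda=\sum_{g}f(g)\chi_\lambda(g).$$
Solving for $c$ gives exactly the displayed formula in the lemma.

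The only step with any real content is Step 2, the standard irreducibility argument; it depends on working over $\C$, which is why the lemma is stated with $\C$-valued functions.
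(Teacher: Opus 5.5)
Your proof is correct. The paper gives no proof and simply cites this as the standard Schur's lemma; your three steps are exactly the textbook argument being invoked: conjugation invariance of $\widehat f(\lambda)$ for a class function, the classical Schur lemma for the irreducible Specht representation over $\C$, and taking traces to identify the scalar as $\frac{1}{d_\lambda}\sum_g f(g)\chi_\lambda(g)$.
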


In particular, let $P_n$ be the probability measure over $\mathfrak{S}_n$ as in \eqref{eq: measure of k-cycle}. Then, its Fourier transform is given by
\begin{equation}\label{eq: Fourier transform of Pn}
\widehat P_n(\lambda)=\frac{\chi_\lambda(\tau_k)}{d_\lambda}\Id_{d_\lambda},\quad \lambda\vdash n.
\end{equation}
where $\tau_k$ is an arbitrary $k$-cycle. Following \cite{nestoridi2022limit}, we will also use the abbreviation $s_\lambda(k)$ for $\frac{\chi_\lambda(\tau_k)}{d_\lambda}$. In particular, by the hook length formula, we have $d_\l=d_{\l'}$. Furthermore, the natural isomorphism $S^{\lambda'}\cong S^\lambda\otimes\sgn$ leads to the fact that $\chi_{\l'}(k)=\chi_\l(k)\cdot(-1)^{k-1}$, and therefore
$$s_{\l'}(k)=s_\l(k)\cdot(-1)^{k-1}.$$

\subsection{Character estimates for $P_n^{*t}$}

Following \eqref{eq: convolution to multiplication} and \eqref{eq: Fourier transform of Pn}, it is clear that
$$\widehat {P_n^{*t}}(\l)=s_\l(k)^t\Id_{d_\l},\quad\l\vdash n.$$
In the following, we provide quantitative bounds that will be useful later. Our range of $t$ will always follow \Cref{thm: X and nu}.

\begin{lemma}[{\cite[Lemma 3.3]{jain2024hitting}}]\label{lem: dimension bound}
Let $\l\vdash n$, and $r:=n-\l_1$. Then, we have
\begin{enumerate}
\item $d_\lambda\le\binom{n}{r}d_{\lambda^*}\le\binom{n}{r}\cdot \sqrt{r!}\le n^r/\sqrt{r!}.$
\item When $r\le \log n$, we have
$$d_\l=\binom{n}{r}\cdot d_{\l^*}\left(1-\frac{r}{n}+O(n^{-2+o(1)})\right).$$
\end{enumerate}
\end{lemma}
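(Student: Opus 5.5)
The plan is to work directly from the hook length formula, reducing both parts to comparing $d_\lambda$ with $d_{\lambda^*}$. Write $\lambda=(\lambda_1,\lambda^*)$ with $\lambda_1=n-r$, and split the hook lengths of $\lambda$ into those in the first row and those in the rows $2,\dots,l(\lambda)$. The hooks in rows $2$ onward are exactly the hooks of $\lambda^*$. Hence
$$
d_\lambda=\frac{n!}{\prod_{j=1}^{\lambda_1} h(1,j)\,\prod_{\square\in\lambda^*}h(\square)}
=\frac{n!}{r!\,\prod_{j=1}^{\lambda_1}h(1,j)}\,d_{\lambda^*}.
$$
The whole argument then reduces to comparing $\prod_j h(1,j)$ with $\lambda_1!=(n-r)!$.

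For part (1), I first observe that $h(1,j)\ge \lambda_1-j+1$, since the arm alone already gives this. Therefore $\prod_j h(1,j)\ge (n-r)!$, which yields $d_\lambda\le \binom{n}{r}d_{\lambda^*}$. (Alternatively one can branch: $d_\lambda$ counts standard Young tableaux, and choosing which $r$ entries lie outside the first row gives an injection into $\binom{n}{r}\times\mathrm{SYT}(\lambda^*)$.)

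Next I use the standard identity $\sum_{\mu\vdash r} d_\mu^2=r!$, which gives $d_{\lambda^*}\le\sqrt{r!}$. Finally, $\binom{n}{r}\le n^r/r!$, so $\binom{n}{r}\sqrt{r!}\le n^r/\sqrt{r!}$.

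For part (2), I need a matching lower bound when $r\le\log n$. Write $h(1,j)=(\lambda_1-j)+1+c_j$, where $c_j$ is the length of column $j$ minus one. Only the first $\lambda_2\le r$ columns have $c_j\ge1$, and for those $c_j\le r$. So
$$
\frac{\prod_j h(1,j)}{(n-r)!}=\prod_{j\le\lambda_2}\Bigl(1+\frac{c_j}{\lambda_1-j+1}\Bigr).
$$
Since $\lambda_1-j+1\ge n-2r$, each factor is $1+c_j/n+O(r^2/n^2)$. Moreover, $\sum_j c_j=r$ (the cells below row one). Taking logarithms gives
$$
\sum_j\Bigl(\frac{c_j}{n}+O\Bigl(\frac{r\,c_j}{n^2}\Bigr)\Bigr)+O\Bigl(\sum_j \frac{c_j^2}{n^2}\Bigr)=\frac{r}{n}+O\Bigl(\frac{r^2}{n^2}\Bigr).
$$
With $r\le\log n$, the error is $O(n^{-2+o(1)})$. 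Exponentiating and inverting gives the factor $1-r/n+O(n^{-2+o(1)})$, where the $r^2/n^2$ term from the expansion is absorbed into $n^{-2+o(1)}$.

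The main obstacle, though a mild one, is the bookkeeping of hook lengths in the first row: checking that the hooks outside the first row are exactly those of $\lambda^*$, and verifying the uniform second-order error bound. Both are elementary.
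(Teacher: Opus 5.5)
Your proof is correct. The identity $d_\lambda=\frac{n!}{r!\prod_j h(1,j)}\,d_{\lambda^*}$, the bound $h(1,j)\ge\lambda_1-j+1$ for part (1), and the uniform expansion $\prod_{j\le\lambda_2}\bigl(1+\frac{c_j}{\lambda_1-j+1}\bigr)=1+\frac rn+O(r^2/n^2)$ for part (2) are all sound. The paper does not reprove this lemma; it quotes it from Jain--Sawhney. Your first-row hook-length ratio is the same identity the paper uses later, $d_\eta/d_{\eta^*}=\binom{N}{s}\prod_{j\le\eta_2}\frac{\eta_1-j+1}{\eta_1-j+\eta'_j}$, so this is the standard route.
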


\begin{prop}[{\cite[Proposition 2.6]{shen2026k}}]\label{prop: small r}
Suppose $r:=n-\l_1\in[1, \log n]$, and $2\le k\le o(n/(\log n)^4)$. Moreover, suppose that $t':=t-\frac{n\log n}{k}$ satisfies $|t'|\le \frac{n\log\log\log n}{2k}$. Then, we have
$$s_\l(k)^t=\left(1-O\left(\frac{r(k+r)\log n}{n}\right)\right)\exp\left(-\frac{rkt}{n}\right).$$
\end{prop}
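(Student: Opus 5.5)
The plan is to compute $s_\lambda(k)=\chi_\lambda(\tau_k)/d_\lambda$ exactly up to a tiny additive error using the Murnaghan--Nakayama rule. I would show that, in the regime $r=n-\lambda_1\le\log n$, $k=o(n/(\log n)^4)$, the character is dominated by a single border strip lying in the first row. I would then raise the resulting single-step estimate to the power $t$.

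\emph{Step 1 (border strips).} Write $\lambda=(n-r,\mu)$ with $\mu=\lambda^*\vdash r$. By Murnaghan--Nakayama, $\chi_\lambda(\tau_k)=\sum_\xi(-1)^{\mathrm{ht}(\xi)}d_{\lambda\setminus\xi}$, where the sum is over border strips $\xi$ of size $k$ whose removal from $\lambda$ leaves a partition. I would classify these strips as follows.
\begin{itemize}
\item The strip consisting of the last $k$ cells of the first row. It is admissible because $\lambda_1-k\ge n-r-k\ge r\ge\lambda_2$ for large $n$, and it has height $0$.
\item Strips meeting both the first row and a lower row. These cannot occur. Connectivity forces such a strip to contain every cell of row $1$ in columns $\lambda_2,\dots,\lambda_1$, so $k\ge\lambda_1-\lambda_2+1\ge n-2r$, which contradicts $k=o(n)$.
\item Strips contained entirely in $\mu$. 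These exist only when $k\le r\le\log n$, and there are at most $l(\mu)\le r$ of them.
\end{itemize}

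\emph{Step 2 (main term).} By \Cref{lem: dimension bound}(2), applied to both $\lambda$ and $\lambda\setminus\xi_0=(n-r-k,\mu)$ (a partition of $n-k$ with the same $r$), the main term is
\[
\frac{d_{(n-r-k,\mu)}}{d_\lambda}=\frac{\binom{n-k}{r}}{\binom{n}{r}}\cdot\frac{1-r/(n-k)+O(n^{-2+o(1)})}{1-r/n+O(n^{-2+o(1)})}=\prod_{i=0}^{r-1}\frac{n-k-i}{n-i}\,\Bigl(1+O\bigl(\tfrac{rk}{n^2}\bigr)\Bigr).
\]
Taking logarithms, $\log\frac{n-k-i}{n-i}=-\frac{k}{n}+O\bigl(\frac{k^2+ki}{n^2}\bigr)$. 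Summing over $i<r$ gives
\[
\log(\text{main term})=-\frac{rk}{n}+O\Bigl(\frac{r(k+r)k}{n^2}\Bigr).
\]
Multiplying by $t=\frac{n\log n}{k}+O\bigl(\frac{n}{k}\log\log\log n\bigr)$ turns the error into $O\bigl(r(k+r)\log n/n\bigr)$. Since this is $o(1)$ in our range, exponentiating yields the claimed factor $1-O\bigl(r(k+r)\log n/n\bigr)$.

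\emph{Step 3 (strips inside $\mu$).} It remains to show that the strips inside $\mu$ change $s_\lambda(k)$ by an additive amount $\delta$ with $t\delta/s_\lambda(k)=O\bigl(r(k+r)\log n/n\bigr)$. Since $s_\lambda(k)\approx e^{-rk/n}\ge 1/2$, it suffices to have $\delta=O\bigl(rk(k+r)/n^2\bigr)$. For such a strip $\xi$, \Cref{lem: dimension bound}(2) gives
\[
\frac{d_{(n-r,\mu\setminus\xi)}}{d_\lambda}\approx\frac{\binom{n-k}{r-k}}{\binom{n}{r}}\cdot\frac{d_{\mu\setminus\xi}}{d_\mu}\le\frac{(r)_k}{(n)_k}\cdot\frac{d_{\mu\setminus\xi}}{d_\mu}.
\]
I expect this to be the delicate step: the bound must be uniform in $\mu$, and I need to control $d_{\mu\setminus\xi}/d_\mu$. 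I would first try the trivial bound $d_{\mu\setminus\xi}\le\sqrt{(r-k)!}$ together with $d_\mu\ge1$. If that proves too lossy, I would instead use the branching rule: $d_\mu$ is at least the number of ways to rebuild $\mu$ from $\mu\setminus\xi$ cell by cell, so $d_{\mu\setminus\xi}\le d_\mu$.

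With the second bound and $k\ge2$, the total contribution is at most
\[
r\cdot\Bigl(\frac{r}{n}\Bigr)^k\le\frac{r^3}{n^2}.
\]
For $k\ge2$ this is $O\bigl(rk(k+r)/n^2\bigr)$ (in fact $r^3\le rk(k+r)$ since $k\le r$ here). Combining this with Step 2 finishes the proof.
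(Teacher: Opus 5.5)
\textbf{Gap in Step 3.} The last line of Step 3 is false, and for $k=2$ the bound it rests on is too weak to give the stated error.

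\textbf{Comparison with the paper.} The paper does not rederive this estimate. It quotes from \cite[Proposition 2.6]{shen2026k} the sharper expansion $s_\lambda(k)^t=\exp\bigl(-\frac{rkt}{n}-\frac{r(k+r)\log n}{2n}(1+o(1)+O(1/k))\bigr)$ and only notes that the second term is $o(1)$ in this range. Your self-contained Murnaghan--Nakayama route is a legitimate alternative, and Steps 1 and 2 are essentially right.

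One bookkeeping point in Step 2: the uniform error $O(n^{-2+o(1)})$ in \Cref{lem: dimension bound}(2) is not small enough after multiplying by $t\approx n\log n/k$ when $r$ and $k$ are small. You need its actual $r$-dependent size $O(r^2/n^2)$. Alternatively, compare the first-row hook products of $\lambda$ and $(n-r-k,\mu)$ directly; this gives the ratio $1-O(rk/n^2)$ exactly as you claim.

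\textbf{The error in Step 3.} The inequality $r^3\le rk(k+r)$ is equivalent to $r^2\le k^2+kr$. This fails as soon as $k<0.6\,r$; for example $k=2$, $r=10$ gives $1000>240$. The hypothesis $k\le r$ points the wrong way. For $k\ge 3$ this is only a careless simplification: keeping $(r/n)^k\le (r/n)^3$ gives $\delta\le r^4/n^3=o(r^2/n^2)$, which suffices. For $k=2$, however, your bound $\delta\le r\,(r/n)^2$ is itself too weak. It gives $t\delta\lesssim r^3\log n/n$, which exceeds the target $r(2+r)\log n/n$ by a factor of order $r$, unbounded when $r\asymp\log n$. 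So as written you only obtain the weaker error $O(r^3\log n/n)$ when $k=2$.

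\textbf{Fix.} The loss comes from bounding the number of strips by $r$ and each ratio $d_{\mu\setminus\xi}/d_\mu$ by $1$ separately. Use instead $\sum_{\xi}d_{\mu\setminus\xi}\le d_\mu$. For distinct removable strips $\xi$, the sets of standard Young tableaux of shape $\mu$ whose entries $1,\dots,r-k$ fill $\mu\setminus\xi$ are disjoint. Each such set has at least $d_{\mu\setminus\xi}$ elements. This gives $|\delta|\le(1+o(1))(r)_k/(n)_k\le(1+o(1))(r/n)^2$ for all $k\ge 2$. Hence $t|\delta|=O(r^2\log n/(kn))=O(r(k+r)\log n/n)$ uniformly in $k\ge2$, and your argument then closes.
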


\begin{proof}
Following \cite[Proposition 2.6]{shen2026k}, we already have
$$s_\l(k)^t=\exp\left(-\frac{rkt}{n}-\frac{r(k+r)\log n}{2n}(1+o(1)+O(1/k))\right).$$
Now, since $k=o(n/(\log n)^4)$ and $r\le \log n$, we have $\frac{r(k+r)\log n}{2n}(1+o(1)+O(1/k))=o(1)$, so that
$$s_\l(k)^t=\left(1-O\left(\frac{r(k+r)\log n}{n}\right)\right)\exp\left(-\frac{rkt}{n}\right).$$
\end{proof}

The following bound generalizes \cite[Lemma 3.4]{jain2024hitting}. It shows that the sum $\sum_{\lambda\vdash n}d_\l^2|s_\l(k)|^{2t}$ is mainly contributed by partitions in $\mathcal{L}$. 

\begin{thm}[{\cite[Theorem 2.15]{shen2026k}}]\label{thm: removal}
Suppose that $2\le k\le o(n/(\log n)^4)$, and that $t':=t-\frac{n\log n}{k}$ satisfies $|t'|\le \frac{n\log\log\log n}{2k}$. Then, we have
$$\sum_{\substack{\lambda\vdash n\\\lambda\notin \mc{L}}}d_\l^2|s_\l(k)|^{2t}=n^{-\omega(1)}.$$
\end{thm}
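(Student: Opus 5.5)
By the symmetry $s_{\l'}(k)=\pm s_\l(k)$ and $d_{\l'}=d_\l$ recorded above, the summand is invariant under conjugation. It therefore suffices to bound the sum over $\l\notin\mathcal L$ with $\l_1\ge l(\l)$ and double the result. I would parametrize such $\l$ by $r:=n-\l_1>\log n$ and split into two regimes: a \emph{near-trivial regime} $\log n<r\le \delta n$ for a small constant $\delta>0$, and a \emph{bulk regime} $r>\delta n$. In both regimes the goal is to compare $d_\l^2$ against a power of an explicit upper bound on $|s_\l(k)|$, using $t\ge \frac{n\log n}{k}-\frac{n}{2k}\log\log\log n$.

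\textbf{Near-trivial regime.} Here I would use a Hough / Nestoridi--Olesker-Taylor type character estimate for $k$-cycles. When $\l_1$ dominates, this estimate reads
$$|s_\l(k)|\le \Bigl(\frac{\l_1}{n}\Bigr)^{k}\bigl(1+O(\tfrac{k r}{n^{2}})\bigr)+\Bigl(\frac{r}{n}\Bigr)^{k/2+o(k)},$$
which follows from the Murnaghan--Nakayama rule or from the contents/Frobenius formula for $s_\l(k)$. It gives
$$|s_\l(k)|^{2t}\le \exp\Bigl(-\frac{2rkt}{n}(1+o(1))\Bigr)\le n^{-2r(1+o(1))}(\log\log n)^{r(1+o(1))}.$$
Combining this with \Cref{lem: dimension bound}(1), $d_\l^2\le n^{2r}/r!$, and with the fact that the number of $\l$ for a given $r$ is at most $p(r)\le e^{C\sqrt r}$, the contribution of the regime is at most
$$\sum_{r>\log n} e^{C\sqrt r}\,\frac{(\log\log n)^{r(1+o(1))}\,n^{o(r)}}{r!}.$$
Since $r!\ge (r/e)^r$ and $r>\log n$, each term is at most $\bigl(e(\log\log n)^{1+o(1)}n^{o(1)}/r\bigr)^{r}$, which is $n^{-\omega(1)}$ provided the $n^{o(r)}$ loss is genuinely $n^{o(1)}$ per unit of $r$. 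I would check this using $k=o(n/(\log n)^4)$, which ensures $kr/n\cdot\log n=o(r)$ for $r\le\delta n$ after taking the $t$-th power.

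\textbf{Bulk regime.} Here $\max(\l_1,l(\l))\le(1-\delta)n$. I would use the crude bound $d_\l^2\le n!\le e^{n\log n}$ together with a uniform exponential character bound $|s_\l(k)|\le e^{-c(\delta)k}$ with $c(\delta)$ large. One source is the Larsen--Shalev bound $|\chi_\l(\sigma)|\le d_\l^{1-c'}$-type, another is the Roichman bound $|s_\l(k)|\le(\max(\l_1/n,\,l(\l)/n,\,q))^{ck}$. Either gives $|s_\l(k)|^{2t}\le \exp(-2c\,n\log n(1-o(1)))$, which beats $n!$ times the number $p(n)=e^{O(\sqrt n)}$ of partitions as long as $2c>1$. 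If the constant falls short, I would interpolate: refine $d_\l\le\binom{n}{r}\sqrt{r!}$ and use $|s_\l(k)|\le(1-r/n)^{k(1-o(1))}$, which holds for $r\le n/2$ by Roichman-type bounds for $k=o(n)$. The resulting sum $\sum_r \binom nr^2 r!\,e^{-2rk t/n}$ is again summable and tiny.

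\textbf{Main obstacle.} The hardest step is getting a character bound for growing $k$ that is sharp to within a factor $1+o(1/\log n)$ in the exponent throughout $\log n<r\le\delta n$, since any constant-factor loss in the rate $rk/n$ destroys the $n^{-2r}$ cancellation against $d_\l^2$. My first attempt would be to expand $s_\l(k)$ via the Frobenius formula as a sum over the rows of $\l$ and isolate the first-row term $\approx(\l_1/n)^k$. The remaining terms are controlled by $(r/n)^{k/2}$, and this is exactly where the hypothesis $k(\log n)^4=o(n)$ is used.
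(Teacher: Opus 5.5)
The paper does not prove this statement. It is imported from \cite[Theorem 2.15]{shen2026k} and used as a black box; the paper's own sharp asymptotics (\Cref{prop: small r}) only cover $n-\lambda_1\le\log n$. So your sketch has to carry the whole argument, and it has a genuine gap. The conjugation reduction and the bookkeeping are fine, namely $\sum_{\lambda_1=n-r}d_\lambda^2\le n^{2r}/r!$ and $e^{-2rkt'/n}\le(\log\log n)^r$. But all the content lies in character bounds that you assert rather than derive, and the forms you state are not strong enough.

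\textbf{Required precision.} Suppose $|s_\lambda(k)|^{2t}\le n^{-2r}(\log\log n)^rL^r$. Then the level-$r$ block is at most $(eL\log\log n/r)^r$. At $r\approx\log n$ this is $n^{-\omega(1)}$ only when $L=o(\log n/\log\log n)$. So the rate $rk/n$ must be captured to relative accuracy $O(\log r/\log n)$. A loss of $n^{o(1)}$ per unit of $r$, for example $(\log n)^2$, already makes the sum diverge, so your stated criterion is wrong.

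\textbf{Your near-trivial estimate fails at $k=2$.} The theorem includes $k=2$. Read at face value, your additive term $(r/n)^{1+o(1)}$ is of the same order as the gap $1-(1-r/n)^2\approx 2r/n$. So it only yields $|s_\lambda(2)|\lesssim 1-r/n$, i.e.\ $|s_\lambda(2)|^{2t}\approx n^{-r}$, and blocks can be as large as $n^r/r!$. The true lower-order terms are much smaller: for two-row shapes, Young's rule plus the count of fixed $r$-subsets gives a correction of order $(r/n)^k$. But proving an estimate of the required precision uniformly over all $\lambda$ with $\log n<n-\lambda_1\le\delta n$ and all $k=o(n/(\log n)^4)$ is the substance of the theorem. ``Isolate the first-row term in the Frobenius formula'' is not yet an argument for it.

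\textbf{Bulk regime.} This step rests on bounds that are not available. For the two-row shape with $r=\delta n$ one computes $s_\lambda(k)=(1-\delta)^{k(1+o(1))}$. Hence no uniform bound $|s_\lambda(k)|\le e^{-c(\delta)k}$ on $\{r>\delta n\}$ can have $c(\delta)$ much larger than $\log\frac{1}{1-\delta}$. That exceeds $1/2$ only for $\delta>1-e^{-1/2}\approx 0.39$, and for such $\delta$ your near-trivial estimate is useless: for $k=3$ its additive term already dominates, since $0.39^{3/2}>0.61^3$. So some band of $r$ is covered by neither of your estimates.

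The cited tools do not fill this band. Roichman's inequality has an unspecified exponent constant, not close to $1$, and a constant floor $q<1$. It therefore gives neither $2c>1$ nor $|s_\lambda(k)|\le(1-r/n)^{k(1-o(1))}$. Larsen--Shalev-type bounds are vacuous here, since a $k$-cycle with $k=o(n)$ has $n-k$ fixed points and $n-k+1$ cycles.

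Your interpolation sum $\sum_r\binom{n}{r}^2 r!\,(1-r/n)^{2bkt}$ needs the exponent factor $b$ to exceed roughly $1/2$ at $r\approx\delta n$ for small $\delta$. It also needs $b=1-O(\log r/\log n)$ for $r$ near $\log n$. The missing ingredient is therefore a uniform, quantitatively sharp character-ratio bound for $k$-cycles over all $\lambda\notin\mathcal L$. Without it, the proposal is a heuristic reduction rather than a proof.
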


Let us turn to character estimates for $\nu_t$. Fix integers $n\ge 1$ and $0\le M\le n/3$. In this case, for $\l\vdash n$, at most one of the following holds:
\begin{enumerate}
\item $\l_1\ge n-M$.
\item $l(\l)\ge n-M$.
\end{enumerate}

Define a probability measure $\xi_M$ (resp. $\xi_M^c$) on $\mathfrak{A}_n$ as follows: choose a subset $S\subset[n]$ uniformly among all subsets of size $M$, then sample a permutation $\pi$ uniformly from $\A_{[n]\setminus S}$ (resp. $\A_{[n]\setminus S}^c$ ), and extend it to an element of $\mathfrak{A}_n$ (resp. $\mathfrak{A}_n^c$) by fixing every point of $S$. The following lemma gives expression of the Fourier transform of $\xi_M$ and $\xi_M^c$.

\begin{thm}[Fourier transform of $\xi_M$ and $\xi_M^c$]
\label{thm :zetaM_Fourier_An}
We have
$$
\widehat{\xi_M}(\lambda)=\frac{K_{\lambda,\mu}+K_{\lambda',\mu}}{d_\lambda}\,\Id_{d_\lambda}, \quad\widehat{\xi_M^c}(\lambda)=\frac{K_{\lambda,\mu}-K_{\lambda',\mu}}{d_\lambda}\,\Id_{d_\lambda},
$$
where
$$
\mu=\mu_M:=(n-M,1,1,\dots,1)\vdash n,
$$
and $K_{\alpha,\mu}$ denotes the Kostka number.

Moreover, if $\l_1<n-M$ and $l(\l)<n-M$, then $\widehat{\xi_M}(\lambda)=\widehat{\xi_M^c}(\lambda)=0$. If $\l_1\ge n-M$, we have
$$\widehat{\xi_M}(\lambda)=\binom{M}{\,n-\lambda_1\,}\frac{
d_{\lambda^*}}{d_\lambda}\,\Id_{d_\lambda},\quad\widehat{\xi_M^c}(\lambda)=\binom{M}{\,n-\lambda_1\,}\frac{
d_{\lambda^*}}{d_\lambda}\,\Id_{d_\lambda}.$$
If $l(\l)\ge n-M$, we have
$$\widehat{\xi_M}(\lambda)=\binom{M}{\,n-l(\lambda)\,}\frac{d_{\lambda^\bullet}}{d_\lambda}\,\Id_{d_\lambda},\quad\widehat{\xi_M^c}(\lambda)=-\binom{M}{\,n-l(\lambda)\,}\frac{d_{\lambda^\bullet}
}{d_\lambda}\,\Id_{d_\lambda}.$$
\end{thm}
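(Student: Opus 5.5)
The plan is to exploit the fact that $\xi_M$ and $\xi_M^c$ are class functions, so that Schur's lemma reduces everything to averaging characters over a Young subgroup. The resulting multiplicities are Kostka numbers, which can then be evaluated directly.

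\emph{Step 1: reduction to a fixed subset.} Both $\xi_M$ and $\xi_M^c$ are invariant under conjugation, because $S$ is uniform among $M$-subsets and conjugation maps $\mathfrak A_{[n]\setminus S}$ onto $\mathfrak A_{[n]\setminus gS}$. Hence Schur's lemma applies, and $\widehat{\xi_M}(\lambda)=d_\lambda^{-1}\,\E[\chi_\lambda(\pi)]\,\Id_{d_\lambda}$. Since $\chi_\lambda$ is a class function, we may fix $S=\{n-M+1,\dots,n\}$. Then $\pi$ is uniform on $\mathfrak A_{n-M}$ (respectively $\mathfrak A_{n-M}^c$), viewed inside the Young subgroup $H=\mathfrak S_{n-M}\times\mathfrak S_1^{M}$ of type $\mu=(n-M,1^M)$.

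\emph{Step 2: characters on the Young subgroup.} The uniform measure on $\mathfrak A_{n-M}$, respectively $\mathfrak A^c_{n-M}$, has density $(1\pm\sgn(g))/(n-M)!$ on $H$, where $\sgn$ is the restriction of the sign of $\mathfrak S_n$. Therefore
$$\E[\chi_\lambda(\pi)]=\langle \operatorname{Res}_H\chi_\lambda,\mathbf 1\rangle_H\pm\langle \operatorname{Res}_H\chi_\lambda,\sgn\rangle_H .$$
By Frobenius reciprocity and Young's rule, the first term equals the multiplicity of $S^\lambda$ in $\operatorname{Ind}_H^{\mathfrak S_n}\mathbf 1=M^\mu$, namely $K_{\lambda,\mu}$. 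For the second term, use $\chi_\lambda\cdot\sgn=\chi_{\lambda'}$, so that $\langle\operatorname{Res}_H\chi_\lambda,\sgn\rangle_H=\langle\operatorname{Res}_H\chi_{\lambda'},\mathbf 1\rangle_H=K_{\lambda',\mu}$. This gives the first displayed formula, with the sign $+$ for $\xi_M$ and $-$ for $\xi_M^c$.

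\emph{Step 3: evaluating the Kostka numbers.} This is the main, if combinatorial, step. $K_{\lambda,\mu}$ counts semistandard tableaux of shape $\lambda$ with $n-M$ ones and one copy each of $2,\dots,M+1$.

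\begin{itemize}
\item All ones must lie in the first row, so $K_{\lambda,\mu}=0$ unless $\lambda_1\ge n-M$.
\item If $\lambda_1\ge n-M$, then $\lambda_2\le n-\lambda_1\le M<n-M$ (using $M\le n/3$). So the cells of row one to the right of position $n-M$ have no cells beneath them.
\item Consequently a tableau is determined by choosing which $\lambda_1-(n-M)$ of the $M$ distinct labels occupy those cells, placed in increasing order, together with a standard filling of $\lambda^*$ by the remaining labels.
\item This gives $K_{\lambda,\mu}=\binom{M}{n-\lambda_1}d_{\lambda^*}$.
\end{itemize}

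Applying the same count to $\lambda'$, with $\lambda'_1=l(\lambda)$, gives $K_{\lambda',\mu}=\binom{M}{n-l(\lambda)}d_{(\lambda')^*}$ when $l(\lambda)\ge n-M$, and $0$ otherwise. Moreover $d_{(\lambda')^*}=d_{\lambda^\bullet}$ by conjugation invariance of dimensions.

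\emph{Step 4: case analysis.} Since $M\le n/3$, the conditions $\lambda_1\ge n-M$ and $l(\lambda)\ge n-M$ are mutually exclusive, as noted before the statement. So in each case exactly one Kostka number survives, and if neither condition holds both vanish. Substituting the values from Step 3 into the formula from Step 2 yields the three stated cases, including the minus sign for $\xi_M^c$ in the long-column case.
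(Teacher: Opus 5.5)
Your proof is correct. The paper gives no argument of its own for this statement; it imports it as \cite[Theorem 2.16]{shen2026k}. Your write-up is therefore a self-contained substitute for that citation, and it follows the route that the Kostka-number form of the statement suggests:
\begin{enumerate}
\item Schur's lemma, together with reduction to the Young subgroup of type $(n-M,1^M)$.
\item Writing the uniform law as the density $(1\pm\sgn)/(n-M)!$.
\item Frobenius reciprocity with Young's rule and $\chi_\lambda\cdot\sgn=\chi_{\lambda'}$, giving $K_{\lambda,\mu}\pm K_{\lambda',\mu}$.
\item The tableau count $K_{\lambda,\mu}=\binom{M}{n-\lambda_1}d_{\lambda^*}$.
\end{enumerate}

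What your version adds over the citation is a clear view of where the hypotheses enter. The Kostka count only needs $\lambda_2\le n-M$, hence only $M\le n/2$. The stronger assumption $M\le n/3$ is used solely for the mutual exclusivity in Step 4.

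That exclusivity in fact requires $2(n-M)-1>n$, i.e.\ $n>2M+1$, so under $M\le n/3$ it needs $n\ge 4$. At $(n,M)=(3,1)$ the partition $\lambda=(2,1)$ satisfies both conditions, and the case formulas fail there. Indeed $\xi_1=\delta_e$, so $\widehat{\xi_1}((2,1))=\Id_2$ rather than $\tfrac12\Id_2$, although your Kostka formula still gives the right value. Likewise, Step 2 tacitly needs $n-M\ge 2$ so that $\mathfrak A_{n-M}^c$ is nonempty and $(1\pm\sgn)/(n-M)!$ is a probability density.

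These degenerate small cases come from the remark preceding the statement in the paper. They are irrelevant in the asymptotic regime ($n\to\infty$, $M\le\log n$) where the theorem is applied.
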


\begin{proof}
This is \cite[Theorem 2.16]{shen2026k}.
\end{proof}

We already have the necessary preparation to prove \Cref{thm: restate of X and nu}. 

\begin{proof}[Proof of \Cref{thm: restate of X and nu}]
In order to simplify computations, we consider the following truncated version $\mu_{n,k}^t$ of $\nu_{n,k}^t$, which is defined in a similar fashion, except that the distribution of $M_t$ (i.e., the size of the random subset $S_t$) is given by
$$\mathbf{P}[M_t=x]=\mathbf{P}[\Pois(\gamma_{n,t})=x]/\mathbf{P}[\Pois(\gamma_{n,t})\le\log n],\quad x\le \log n.$$
Here, $\gamma_{n,t}:=\exp(-kt'/n)$ follows from \Cref{defi: construction of nu}. Due to our assumption on $t'$, we have $\gamma_{n,t}\le\log\log n$, and therefore
\begin{equation}\label{eq: Poisson truncate}
\mathbf{P}[\Pois(\gamma_{n,t})\ge \log n]\le \mathbf{P}[\Pois(\log\log n)\ge \log n]=n^{-\omega(1)}.
\end{equation}
Thus, it follows from the natural decoupling that $d_{TV}(\nu_{n,k}^t,\mu_{n,k}^t)=n^{-\omega(1)}$. Thus, it suffices to prove the same bound for $d_{TV}(P_n^{*t},\mu_{n,k}^t)$. Let $f_1$ be the probability density function of $P_n^{*t}$, and $f_2$ be the probability density function of $\mu_{n,k}^t$. Using the Cauchy-Schwarz inequality followed by the Plancherel formula, we have that (the dagger superscript refers to the conjugate transpose matrix)
\begin{align}
\begin{split}
4\cdot d_{TV}(P_n^{*t},\mu_{n,k}^t)^2&\le n!\sum_{\sigma\in\S_n}(f_1(\sigma)-f_2(\sigma))^2\\
&=\sum_{\l\vdash n}d_\l\Tr\left((\widehat f_1(\l)-\widehat f_2(\l))(\widehat f_1(\l)-\widehat f_2(\l))^\dag\right)\\
&=\sum_{\l\notin\L}d_\l^2|s_\l(k)|^{2t}+\sum_{\l\in\L}d_\l\Tr\left((\widehat f_1(\l)-\widehat f_2(\l))(\widehat f_1(\l)-\widehat f_2(\l))^\dag\right)\\
&=n^{-\omega(1)}+\sum_{\l\in\L}d_\l\Tr\left((\widehat f_1(\l)-\widehat f_2(\l))(\widehat f_1(\l)-\widehat f_2(\l))^\dag\right).\\
\end{split}
\end{align}
Here, the third line follows from \Cref{thm :zetaM_Fourier_An} since $\widehat f_2(\l)=0$ when $\l\notin\L$, and the final line follows from \Cref{thm: removal}. Moreover, due to the symmetry under conjugation, the partitions in $\mathcal L_1$ and $\mathcal L_2$ contribute equally to the whole sum:
the sign change $s_{\lambda'}(k)^t=(-1)^{t(k-1)}s_\lambda(k)^t$ is matched
by the corresponding sign change in the Fourier coefficient of the parity-supported comparison measure. Hence, we only need to prove
\begin{equation*}
\sum_{\l\in\L_1}d_\l\Tr\left((\widehat f_1(\l)-\widehat f_2(\l))(\widehat f_1(\l)-\widehat f_2(\l))^\dag\right)=o\left((\log n)^{-5}\right).
\end{equation*}
Now, suppose that $\l\in\mathcal{L}_1$, and denote $r:=n-\l_1$. On the one hand, by \Cref{prop: small r}, we have
\begin{equation}
\widehat f_1(\l)=s_\l(k)^t\Id_{d_\l}=\left(1-O\left(\frac{r(k+r)\log n}{n}\right)\right)\exp\left(-\frac{rkt}{n}\right)\Id_{d_\l}.
\end{equation}
On the other hand, notice that
$$\mu_{n,k}^t\sim\begin{cases}
\sum_{l=0}^{\log n}\xi_l^c\cdot\frac{\mathbf{P}[\Pois(\gamma_{n,t})=l]}{\mathbf{P}[\Pois(\gamma_{n,t})\le\log n]} & t \text{ odd},k\text{ even}\\
\sum_{l=0}^{\log n}\xi_l\cdot\frac{\mathbf{P}[\Pois(\gamma_{n,t})=l]}{\mathbf{P}[\Pois(\gamma_{n,t})\le\log n]} & \text{else}
\end{cases},$$
where $\xi_l,\xi_l^c$ are the distributions in the statement of \Cref{thm :zetaM_Fourier_An}. Therefore, it follows from \Cref{thm :zetaM_Fourier_An} and \Cref{lem: dimension bound} that
\begin{align}
\begin{split}
\widehat f_2(\l)&=\mathbf{P}[\Pois(\gamma_{n,t})\le\log n]^{-1}\cdot\sum_{l=0}^{\log n}\frac{d_{\lambda^*}\binom{l}{r}}{d_\lambda}\mathbf{P}[\Pois(\gamma_{n,t})=l]\Id_{d_\l}\\
&=\left(1-\frac rn+O(n^{-2+o(1)})\right)\sum_{l=0}^{\log n}\binom{l}{r}\cdot r!n^{-r}\cdot\frac{\gamma_{n,t}^l \exp(-\gamma_{n,t})}{l !}\Id_{d_\l}\\
&=\left(1-\frac rn+O(n^{-2+o(1)})\right)\sum_{l=r}^{\log n}n^{-r}\cdot\frac{\gamma_{n,t}^l \exp(-\gamma_{n,t})}{(l-r)!}\Id_{d_\l}\\
&=\left(1-\frac rn+O(n^{-2+o(1)})\right)n^{-r}\cdot\gamma_{n,t}^r\cdot\mathbf{P}[\Pois(\gamma_{n,t})\le \log n-r]\Id_{d_\l}\\
&=\left(1-\frac rn+O(n^{-2+o(1)})\right)\exp\left(-rkt/n\right)\mathbf{P}[\Pois(\gamma_{n,t})\le\log n-r]\Id_{d_\l}.
\end{split}
\end{align}
Here, we use the fact from \eqref{eq: Poisson truncate} that $\mathbf{P}[\Pois(\gamma_{n,t})\le\log n]=1-n^{-\omega(1)}$. Now, suppose
$$\widehat f_1(\l)-\widehat f_2(\l)=\alpha_\l\Id_{d_\l},\quad \l\in\L_1,$$
so that 
$$\sum_{\l\in\L_1}d_\l\Tr\left((\widehat f_1(\l)-\widehat f_2(\l))(\widehat f_1(\l)-\widehat f_2(\l))^\dag\right)=\sum_{\l\in\L_1}d_{\l}^2|\alpha_{\l}|^2.$$
We break the above sum into two parts, depending on whether $r\ge\frac12\log n$ or not.
\begin{enumerate}
\item Suppose $r\ge \frac 12\log n$. In this case, we can use the rough bound $\widehat f_1(\l)-\widehat f_2(\l)=\alpha_\l\Id_{d_\l}$ where $|\alpha_\l|\le 3\exp(-rkt/n)$, along with the dimension bound $d_\l^2\le n^{2r}/r!$ in \Cref{lem: dimension bound} to see that
\begin{align}
\begin{split}
\sum_{n-\l_1\in[\frac 12\log n,\log n]}d_\l^2|\alpha_\l|^2&\le9\sum_{r\in[\frac 12\log n,\log n]}\sum_{\l_1=n-r}\exp(-2rkt/n)\frac{n^{2r}}{r!}\\
&\le O(\log n)\cdot\max_{r\in[\frac 12\log n,\log n]}\frac{\exp(-2rkt'/n)}{r!}\exp\left(\pi\sqrt{2r/3}\right)\\
&= n^{-\omega(1)}.
\end{split}
\end{align}
Here, in the last inequality, we used Stirling's formula and the assumption $|2kt'/n|\le\log\log\log n$.

\item Suppose $r<\frac 12\log n$. In this case, we have 
$$\mathbf{P}[\Pois(\gamma_{n,t})\le \log n-r]\ge \mathbf{P}\left[\Pois(\log\log n)\le \frac 12\log n\right]\ge1-n^{-\omega(1)}.$$ 
Therefore, we have $\widehat f_2(\l)=\left(1-\frac rn+O(n^{-2+o(1)})\right)\exp\left(-rkt/n\right)$, and
\begin{align*}
\begin{split}
\alpha_\lambda&=\exp(-rkt/n)\left(\left(1-O\left(\frac{r(k+r)\log n}{n}\right)\right)-\left(1-\frac rn+O(n^{-2+o(1)})\right)\right)\\
&=O\left(\frac{r(k+r)\log n}{n}\right)\exp(-rkt/n).
\end{split}
\end{align*}
Also, by \Cref{lem: dimension bound}, we obtain
\begin{equation}\label{eq: estimate of sum of d_l^2}
\sum_{\l_1=n-r}d_\l^2\le \binom{n}{r}^2\sum_{\mu\vdash r}d_\mu^2=\binom{n}{r}^2\cdot r!\le\frac{n^{2r}}{r!}.
\end{equation}
Thus, applying \eqref{eq: estimate of sum of d_l^2}, we have 
\begin{align}
\begin{split}
\sum_{n-\l_1<\frac 12\log n}d_\l^2|\alpha_\l|^2&\le \sum_{r<\frac 12\log n}O\left(\frac{r(k+r)\log n}{n}\right)^2\frac{n^{2r}}{r!}\exp(-2rkt/n)\\
&\le \sum_{r\ge 0}O\left(\frac{r(k+r)\log n}{n}\right)^2\frac{\exp(-2rkt'/n)}{r!}\\
&=O\left(\frac{k^2(\log n)^2}{n^2}\cdot\log n\right)\\
&=o\left((\log n)^{-5}\right).
\end{split}
\end{align}
Here, on the third line, we use the fact that $|2kt'/n|\le\log\log\log n$, which implies 
$$\sum_{r\ge 0}r^m\frac{\exp(-2rkt'/n)}{r!}\le\sum_{r\ge 0}r^m\frac{(\log\log n)^r}{r!}=O(\log n)$$
for integers $m=2,3,4$.
\end{enumerate}

Combining our discussion of $r\in[1,\frac 12\log n)$ and $r\in[\frac 12\log n,\log n]$ above, we complete the proof.
\end{proof}

\section{From Fixed Time Approximation to Hitting Time Mixing}\label{sec: fixed to hitting}

\subsection{Reduction to distributional approximation at a fixed time}
The role of this subsection is to upgrade the fixed-time approximation of \Cref{sec: Jain-Sawhney} into a conditional approximation given the exact set of untouched cards. This will provide the initial distribution for the marking argument started at $t^*$ defined in \eqref{eq: starting time of marking scheme}.

For a subset $T \subset [n]$, let $\mathcal{G}(T) = \mathcal{G}^t(T)$ denote the event that the random walk does not touch $T$ up to time $t$ and let $\mathcal{F}(T) = \mathcal{F}^t(T)$ denote the event that the random walk touches everything outside $T$ and does not touch $T$ up to time $t$. We use $\mathcal{F}(M)$ to denote $\mathcal{F}([M])$ for an integer $1\leq M \leq n$. Recall from \Cref{thm: X and nu} that $t =  \frac{n \log n}{k}  + t'$ where $|t'| \leq \frac{n}{2k}\log \log \log n$.

\begin{lemma}\label{lemma::Self-reduction lemma}
    Let $M \in \mathbb{N}$ and $T \subset [n]$ such that $M \leq |T| = o((\log n)^{3/2})$. Recall that $\gamma_{n,t} = \exp(-kt'/n).$ Then
    \begin{enumerate}
        \item $\mathbf{P}[\mathcal{G}(T)] = (1+(\log n)^{-1/2+o(1)})\mathbf{P}[\mathcal{G}([M])]\gamma_{n,t}^{|T| - M}n^{M-|T|}$,
        \item $\mathbf{P}[\mathcal{F}([M])] =(1+(\log n)^{-1/2+o(1)})\mathbf{P}[\mathcal{G}([M])]\exp(-\gamma_{n,t})$,
        \item $\sum_{T \supseteq [M], |T| \leq o((\log n)^{3/2})} \mathbf{P}[\mathcal{G}(T)] \leq (\log n)^{o(1)}\mathbf{P}[\mathcal{F}([M])]$.
    \end{enumerate}
\end{lemma}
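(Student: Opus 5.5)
The plan is to compute $\mathbf P[\mathcal G(T)]$ exactly and then argue by direct asymptotics, inclusion--exclusion, and a union sum. The support of a uniform $k$-cycle is a uniform $k$-subset of $[n]$, and the $t$ steps are independent. Hence
$$
\mathbf P[\mathcal G(T)]=\left(\frac{(n-|T|)_k}{(n)_k}\right)^{t}\quad\text{and}\quad \frac{\mathbf P[\mathcal G(T)]}{\mathbf P[\mathcal G([M])]}=\left(\frac{(n-|T|)_k}{(n-M)_k}\right)^{t}.
$$
All three parts reduce to estimating this ratio uniformly for $|T|=o((\log n)^{3/2})$.

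\emph{Part (1).} Write $m:=|T|-M$ and take logarithms:
$$
t\sum_{j=0}^{k-1}\log\Bigl(1-\frac{m}{n-M-j}\Bigr)=-t\,m\Bigl(\frac kn+O\Bigl(\frac{k(k+|T|)}{n^2}\Bigr)\Bigr)+O\Bigl(\frac{t\,k\,m^2}{n^2}\Bigr).
$$
Insert $t=\frac{n\log n}{k}+t'$. The main term is $-m\log n-mkt'/n$, which exponentiates to exactly $n^{-m}\gamma_{n,t}^{m}$. The error terms are
$$
O\Bigl(\frac{m(k+|T|)\log n}{n}\Bigr)+O\Bigl(\frac{m^2\log n}{n}\Bigr).
$$
Both are $o(1)$, in fact polynomially small in $n$, because $k=o(n/(\log n)^4)$ and $|T|=o((\log n)^{3/2})$. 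This gives (1) with room to spare.

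\emph{Part (2).} Use inclusion--exclusion over the set $U\subseteq[n]\setminus[M]$ of cards outside $[M]$ that are required to be untouched:
$$
\mathbf P[\mathcal F([M])]=\sum_{j\ge0}(-1)^j\sum_{|U|=j}\mathbf P[\mathcal G([M]\cup U)].
$$
By Bonferroni, truncating at $j\le L$ with, say, $L=(\log n)^{1/2}$ costs at most the size of the $L$-th term. For $j\le L$, part (1) together with $\binom{n-M}{j}n^{-j}=(1+O(j^2/n))/j!$ makes the $j$-th term equal to
$$
(1+\varepsilon_j)\,\mathbf P[\mathcal G([M])]\,\frac{\gamma_{n,t}^j}{j!},\qquad \sup_{j\le L}|\varepsilon_j|=\varepsilon=o(1).
$$
The main terms sum to $\exp(-\gamma_{n,t})\mathbf P[\mathcal G([M])]$ up to the tail $\gamma_{n,t}^L/L!$. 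The relative error terms contribute at most $\varepsilon e^{\gamma_{n,t}}\mathbf P[\mathcal G([M])]$.

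The main obstacle is that we must divide by $e^{-\gamma_{n,t}}$. This produces a relative error of order $(\varepsilon+\gamma_{n,t}^L/L!)\,e^{2\gamma_{n,t}}$, and $e^{2\gamma_{n,t}}$ is not bounded. Our assumption on $t'$ gives $\gamma_{n,t}\le(\log\log n)^{1/2}$, so $e^{2\gamma_{n,t}}=(\log n)^{o(1)}$. This is harmless against $\varepsilon$, which is polynomially small in $n$. The choice of $L$ then only needs $\gamma_{n,t}^L/L!\le(\log n)^{-1/2}$, which holds easily for $L=(\log n)^{1/2}$. The result is the stated relative error $(\log n)^{-1/2+o(1)}$. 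I would make sure that the uniformity of part (1) over all $U$ with $|U|\le L$ is stated explicitly, since the truncation argument relies on it.

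\emph{Part (3).} Group the sets $T\supseteq[M]$ by $j=|T|-M$ and apply part (1) to each group:
$$
\sum_{T}\mathbf P[\mathcal G(T)]\le(1+o(1))\,\mathbf P[\mathcal G([M])]\sum_j\binom{n-M}{j}\frac{\gamma_{n,t}^j}{n^j}\le(1+o(1))\,e^{\gamma_{n,t}}\,\mathbf P[\mathcal G([M])].
$$
By part (2), $\mathbf P[\mathcal G([M])]$ is $(1+o(1))\,e^{\gamma_{n,t}}\,\mathbf P[\mathcal F([M])]$. So the sum is at most $(1+o(1))\,e^{2\gamma_{n,t}}\,\mathbf P[\mathcal F([M])]=(\log n)^{o(1)}\,\mathbf P[\mathcal F([M])]$, which is (3).
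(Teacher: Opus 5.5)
Your proposal is correct and follows the paper's route: the exact product formula and a logarithmic expansion for (1), inclusion--exclusion with Bonferroni truncation for (2), and grouping by $|T|-M$ plus (1)--(2) for (3). Your explicit tracking of how termwise relative errors in the alternating sum are amplified by $e^{2\gamma_{n,t}}=(\log n)^{o(1)}$ is more careful than the paper, which simply factors a single $(1+(\log n)^{-1/2+o(1)})$ out of that sum. One inaccuracy: the error $O(m(k+|T|)\log n/n)$ is not polynomially small in $n$ when $k$ is close to $n/(\log n)^4$ (in general it is only $o((\log n)^{-3/2})$, and $o((\log n)^{-5/2})$ for $m\le(\log n)^{1/2}$), but this still lies well below the required $(\log n)^{-1/2+o(1)}$ after multiplying by $e^{2\gamma_{n,t}}$, so the argument goes through.
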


\begin{proof}
For the first item,
\begin{equation}
\begin{aligned}
        \mathbf{P}[\mathcal{G}(T)] = \left(\frac{(n-|T|)_k}{(n)_k}\right)^{t} &= \exp\left(t\sum_{i=0}^{k-1}\log\left(1-\frac{|T|}{n-i}\right)\right)\\
        &= \exp\left(-kt\frac{|T|}{n}\right)\exp\left(O\left(\frac{tk^2|T|}{n^2}\right)+O\left(\frac{tk|T|^2}{n^2}\right)\right).
\end{aligned}
\end{equation}
Since $O\left(\frac{tk^2|T|}{n^2}\right) + O\left(\frac{tk|T|^2}{n^2}\right)=O\left(\frac{1}{(\log n)^{1/2}}\right)$, we have
\begin{equation}
\begin{aligned}
        \mathbf{P}[\mathcal{G}(T)]
        &= (1+(\log n)^{-1/2+o(1)})\exp\left(-kt\frac{|T|}{n}\right)\\
        &= (1+(\log n)^{-1/2+o(1)})\exp\left(-kt\frac{M}{n}\right)\exp\left(-kt\frac{|T|-M}{n}\right)\\
        &=  (1+(\log n)^{-1/2+o(1)})\mathbf{P}[\mathcal{G}(M)]\gamma_{n,t}^{|T|-M}n^{M-|T|}.
\end{aligned}
\end{equation}

For the second item, we use the principle of inclusion-exclusion to rewrite the indicator function of $\mathcal{F}(M)$ as a sum of indicator functions of $\mathcal{G}(T)$,
\begin{equation}
    \indic \{\mathcal{F}(M)\} = \sum_{T \supseteq [M]} (-1)^{|T| - M} \indic \{\mathcal{G}(T)\}.
\end{equation}
By the Bonferroni inequality, we have
\begin{equation}\label{eq: Bonferroni}
\sum_{T \supseteq [M], |T| \leq M + 2L +1} (-1)^{|T| - M} \indic \{\mathcal{G}(T)\} \leq \indic\{\mathcal{F}(M)\} \leq \sum_{T \supseteq [M], |T| \leq M + 2L} (-1)^{|T| - M} \indic \{\mathcal{G}(T)\}
\end{equation}
for any positive integer $L$. Take \(L=\lfloor\log n\rfloor\) and take expectations on both sides of \eqref{eq: Bonferroni}. Since \(\mathbf P[\mathcal G(T)]\) depends only on \(|T|\), the contribution of all sets \(T\supseteq[M]\) with \(|T|=M+j\) is
\(\binom{n-M}{j}\mathbf P[\mathcal G(T)]\), where \(T\) is any such set. Hence
$$
\sum_{j=0}^{2L+1}
(-1)^j
\binom{n-M}{j}
\mathbf P[\mathcal G(T)]
\leq
\mathbf P[\mathcal F(M)]
\leq
\sum_{j=0}^{2L}
(-1)^j
\binom{n-M}{j}
\mathbf P[\mathcal G(T)].
$$
Here, in the term indexed by \(j\), \(T\) denotes any set satisfying \(T\supseteq[M]\) and \(|T|=M+j\).
We then apply the first item
\[
\begin{aligned}
(1+(\log n)^{-1/2+o(1)})\frac{\mathbf{P}[\mathcal{F}(M)]}{\mathbf{P}[\mathcal{G}(M)]} &= \sum_{j=0}^{2L} \binom{n-M}{j}\frac{(-\gamma_{n,t})^j}{n^j} \pm  \binom{n-M}{2L+1}\frac{(-\gamma_{n,t})^{2L+1}}{n^{2L+1}}\\
&=(1+n^{-1+o(1)})\left(\sum_{j=0}^{2L} \frac{(-\gamma_{n,t})^j}{j!} \pm \frac{(-\gamma_{n,t})^{2L+1}}{(2L+1)!}  \right)\\
&= (1+n^{-1+o(1)})\exp(-\gamma_{n,t})
\end{aligned}
\]

where the last equality follows from the fact that $\gamma_{n,t} \leq \sqrt{\log \log n}$ and $L=\lfloor\log n\rfloor$.

For the last item, we use the first and second items
\[
\begin{aligned}
    \sum_{T \supseteq [M], T \leq o((\log n)^{3/2})} \mathbf{P}[\mathcal{G}(T)] &= \sum_{s=0}^{o((\log n)^{3/2})}\binom{n-M}{s}\mathbf{P}\left[\mathcal{G}([M+s])\right]\\
    & = (1+(\log n)^{-1/2+o(1)}) \mathbf{P}[\mathcal{F}(M)]e^{\gamma_{n,t}} \sum_{s=0}^{ o((\log n)^{3/2})} \binom{n-M}{s}\left( \frac{\gamma_{n,t}}{n}\right)^s\\
    & \leq (1+(\log n)^{-1/2+o(1)}) \mathbf{P}[\mathcal{F}(M)]e^{\gamma_{n,t}} \sum_{s=0}^{ o((\log n)^{3/2})} \binom{n}{s}\left( \frac{\gamma_{n,t}}{n}\right)^s\\
    &\leq (1+(\log n)^{-1/2+o(1)}) \mathbf{P}[\mathcal{F}(M)]e^{\gamma_{n,t}} \sum_{s=0}^{ o((\log n)^{3/2})}\left( \frac{\gamma_{n,t}^s}{s!}\right)\\
    &\leq (1+(\log n)^{-1/2+o(1)}) \mathbf{P}[\mathcal{F}(M)]e^{2\gamma_{n,t}}\\
    &\leq (\log n)^{o(1)}\mathbf{P}[\mathcal{F}(M)]
\end{aligned}
\]
where the last equality follows from the fact that $\gamma_{n,t} \leq \sqrt{\log \log n}$. 
\end{proof}

For the next lemma, suppose that $|t'| \leq \frac{n}{2k}\log \log \log n$. For a subset $T \subset [n]$, let 
\[
t_T := \frac{(n-|T|)\log(n-|T|)}{k}+ \frac{t'(n-|T|)}{n}.
\]
Then, we have 
\begin{equation}\label{eq: gamma_T}
\begin{aligned}
    \gamma_{n-|T|,t_T} = \exp(-kt'(n-|T|)/n(n-|T|)) = \exp(-kt'/n) =\gamma_{n,t}.
\end{aligned}
\end{equation}
\begin{defi}\label{def: nu^T_t}
    Let $\nu^T_t$ denote the distribution on $\mathfrak{S}_{[n]\setminus T}$  defined by the following sampling process. First, sample $M_t\in\{0,1,\ldots,n-|T|\}$ according to the distribution $\mathbf{P}[M_t=x]=\mathbf{P}(\Pois(\gamma_{n-|T|,t_T})=x)/\mathbf{P}(\Pois(\gamma_{n-|T|,t_T})\le n-|T|)$, then sample a uniformly random subset $S_t$ of size $M_t$ in $[n]\setminus T$. Finally,
\begin{enumerate}
\item When $k$ is odd, sample a uniformly random element of $\mathfrak{A}_{[n]\backslash S_t \cup T}$ and view it as an element of $\mathfrak{A}_n$ by fixing all of the elements in $S_t$ and $T$.
\item When $k$ is even, and $t$ is odd, sample a uniformly random element of $\mathfrak{A}_{[n]\backslash S_t\cup T}^c$ and view it as an element of $\mathfrak{A}_n^c$ by fixing all of the elements in $S_t$ and $T$.
\item When $k$ is even, and $t$ is even, sample a uniformly random element of $\mathfrak{A}_{[n]\backslash S_t\cup T}$ and view it as an element of $\mathfrak{A}_n$ by fixing all of the elements in $S_t$ and $T$.
\end{enumerate}
\end{defi}
Using natural inclusion from $\mathfrak{S}_{[n]\setminus T}$ to $\mathfrak{S}_n$, we view $\nu_t^T$ as a distribution on $\mathfrak{S}_n$. For the following \Cref{lem: relation of T and M} and \Cref{lem: conditioned}, we choose $ t = t_T$ and use the notation $\nu^T = \nu_{t_T}^T$. For the following lemma, we let $\Omega_{t_T}$ denote the parity class at the relevant actual time $t_T.$
\begin{lemma}\label{lem: relation of T and M}
    For any $\sigma \in \Omega_{t_T}$ such that $\mathrm{Fix}(\sigma) \supseteq [M]$ and $|\mathrm{Fix}(\sigma)| \leq {o((\log n)^{3/2})}$, we have
    \[
    \sum_{[M] \subseteq T \subseteq \mathrm{Fix}(\sigma)}(-1)^{|T| - M}\mathbf{P}[\mathcal{G}(T)]\nu^T(\sigma) = (2+(\log n)^{-1/2+o(1)})\frac{\mathbf{P}[\mathcal{F}(M)]}{(n-M)!}.
    \]
\end{lemma}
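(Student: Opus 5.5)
We compute both factors explicitly and then sum the alternating series. Write $F:=\mathrm{Fix}(\sigma)$, $f:=|F|$, and for $[M]\subseteq T\subseteq F$ write $j:=|T|-M$ and $m:=n-|T|$.

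\emph{Step 1: evaluate $\nu^T(\sigma)$.} By \Cref{def: nu^T_t}, $\nu^T$ first picks $M_t\sim\Pois(\gamma)$, truncated at $m$, where by \eqref{eq: gamma_T} we have $\gamma=\gamma_{n-|T|,t_T}=\gamma_{n,t}$. It then picks a uniform subset $S\subseteq[n]\setminus T$ of size $M_t$, and finally a uniform element of the parity class $\Omega_{t_T}$ on $[n]\setminus(S\cup T)$. Since $\sigma\in\Omega_{t_T}$ fixes $T$, it is produced exactly when $S\subseteq F\setminus T$. In that case it has probability $2/(m-|S|)!$, with the appropriate convention when $m-|S|\le 1$; this regime is negligible because $m\approx n$. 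Hence
\[
\nu^T(\sigma)=\frac{2}{m!}\sum_{s=0}^{f-|T|}\binom{f-|T|}{s}\frac{(m)_s}{\binom{m}{s}}\frac{\gamma^s e^{-\gamma}}{s!}\bigl(1+n^{-\omega(1)}\bigr)
=\frac{2e^{-\gamma}}{m!}\sum_{s}\binom{f-|T|}{s}\gamma^s\bigl(1+n^{-\omega(1)}\bigr).
\]
The factor $(m)_s/\binom{m}{s}=s!$ comes from counting: a fixed $s$-set $S$ has probability $1/\binom{m}{s}$, and the uniform permutation on the remaining $m-s$ points has weight $1/(m-s)!=(m)_s/m!$. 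The sum equals $(1+\gamma)^{f-|T|}$, so
\[
\nu^T(\sigma)=\frac{2e^{-\gamma}(1+\gamma)^{f-M-j}}{(n-M-j)!}\bigl(1+n^{-\omega(1)}\bigr).
\]

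\emph{Step 2: insert the first item of \Cref{lemma::Self-reduction lemma}.} That item gives $\mathbf P[\mathcal G(T)]=(1+(\log n)^{-1/2+o(1)})\mathbf P[\mathcal G([M])]\gamma^jn^{-j}$, uniformly over $|T|=o((\log n)^{3/2})$. Combining this with the factorial ratio $(n-M)!/(n-M-j)!=n^j(1+O(j^2/n))$, each term of the sum becomes
\[
(-1)^j\,\frac{2e^{-\gamma}\,\mathbf P[\mathcal G([M])]}{(n-M)!}\,\gamma^j(1+\gamma)^{f-M-j}\bigl(1+\epsilon_T\bigr),
\]
where $\epsilon_T=(\log n)^{-1/2+o(1)}$ uniformly in $T$. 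There are $\binom{f-M}{j}$ sets $T$ with $|T|=M+j$.

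\emph{Step 3: sum.} The main terms add up by the binomial theorem:
\[
\sum_j\binom{f-M}{j}(-\gamma)^j(1+\gamma)^{f-M-j}=\bigl((1+\gamma)-\gamma\bigr)^{f-M}=1.
\]
By the second item of \Cref{lemma::Self-reduction lemma}, $e^{-\gamma}\mathbf P[\mathcal G([M])]=(1+(\log n)^{-1/2+o(1)})\mathbf P[\mathcal F(M)]$, which yields the claimed main term $2\mathbf P[\mathcal F(M)]/(n-M)!$.

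\emph{Main obstacle: the error terms.} Because the sum is alternating, relative errors do not simply pass through. The accumulated error is bounded by
\[
(\log n)^{-1/2+o(1)}\sum_j\binom{f-M}{j}\gamma^j(1+\gamma)^{f-M-j}=(\log n)^{-1/2+o(1)}(1+2\gamma)^{f-M}.
\]
With $f$ as large as $o((\log n)^{3/2})$, the factor $(1+2\gamma)^{f-M}$ is not obviously $(\log n)^{o(1)}$. I would first try to sharpen the first item of \Cref{lemma::Self-reduction lemma} to the form $\mathbf P[\mathcal G(T)]=\mathbf P[\mathcal G([M])]\gamma^jn^{-j}e^{\delta(j)}$, where the correction $\delta(j)$ is explicit, smooth in $j$, and of size $O(tk(k+|T|)j/n^2)$. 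The leading correction is then of the form $e^{cj}$ with $c$ small and depending only on $j$. This can be absorbed by replacing $\gamma$ with $\gamma e^{c}$ in the binomial identity, which still yields exactly $1$ plus a small error, instead of being bounded in absolute value. If that fails, I would restrict to $f-M$ of order at most $\log\log n$, presumably enough for the later application since the untouched set is Poisson of mean $\le\sqrt{\log\log n}$, and use the crude absolute bound there.
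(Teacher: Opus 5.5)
Your Steps 1--3 coincide with the paper's proof. The paper, however, just moves the $T$-dependent factors $1+(\log n)^{-1/2+o(1)}$ from the first item of \Cref{lemma::Self-reduction lemma} outside the alternating sum over $T$. That is exactly the step you flag, so there is no argument in the paper to borrow, and your proposal stops precisely where one is needed. Neither of your fallbacks closes the gap.

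Write $N=n-M$, $F=f-M$, $g(m)=((n-m)_k/(n)_k)^t$ (so $\mathbf P[\mathcal G(T)]=g(|T|)$), and
\[
\phi(j):=\frac{g(M+j)}{g(M)}\cdot\frac{(N)_j}{\gamma^j}.
\]
Then your sum equals $\frac{2e^{-\gamma}g(M)}{N!}\sum_{j}\binom{F}{j}(-\gamma)^j(1+\gamma)^{F-j}\phi(j)$, up to Poisson-truncation factors $1+e^{-\Omega(n)}$ that are harmless even against $(1+2\gamma)^F$. Expanding gives $\log\phi(j)=aj+O(j^2\log n/n)$ with $a=O((k+M)\log n/n)$. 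The quadratic term, coming from $t\sum_i j^2/(2(N-i)^2)$ and from $(N)_j$, really is of order $j^2\log n/n$, so your $\delta(j)$ is not linear.

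\emph{First fallback.} Absorbing $e^{aj}$ into $\gamma$ is fine. But if the residual is then bounded in absolute value, it gets multiplied by $(1+2\gamma)^F$. At $t^*$ one has $\gamma=(\log\log n)^{1/2+o(1)}$. For $F\asymp\log n$, which does occur since \Cref{prop:key} uses $|\operatorname{Fix}(\sigma)|\le 3\log n$, this factor is $n^{\Theta(\log\log\log n)}$ and swamps $n^{-1+o(1)}$.

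\emph{Second fallback.} The cutoff fails the same way. With relative error $(\log n)^{-1/2+o(1)}$, already $F=\log\log n$ gives $(1+2\gamma)^F=(\log n)^{(1/2+o(1))\log\log\log n}$. You would need $F\le c\log\log n/\log\log\log n$ with $c<1$. That proves only a weaker lemma and forces you to redo the split in \Cref{prop:key}.

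The missing idea is that the signed weights see only finite differences of $\phi$. So the cancellation must be used for the whole $j$-dependence, not just its linear part. Set $\gamma'=\gamma e^{a}$ and $\tilde\phi(j)=e^{-aj}\phi(j)$, let $E$ be the shift, and put $\Delta=E-I$. Expanding $\bigl((1+\gamma)I-\gamma'E\bigr)^F$ gives
\[
\sum_{j}\binom{F}{j}(-\gamma)^j(1+\gamma)^{F-j}\phi(j)=\sum_{u=0}^{F}\binom{F}{u}(1+\gamma-\gamma')^{F-u}(-\gamma')^{u}\,(\Delta^u\tilde\phi)(0).
\]
For $p\ge2$, the $z^p$ coefficient of $\log\tilde\phi$ is $O(\log n/n^{p-1})$. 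Hence $\tilde\phi$ extends analytically and is bounded on $|z|\le 2R$ with $R=\sqrt{n/\log n}$. The mean value theorem for finite differences plus Cauchy's estimate then give $|(\Delta^u\tilde\phi)(0)|\le C\,u!\,R^{-u}$. The right-hand side is therefore $1+O(F\gamma|a|+F\gamma/R)=1+o((\log n)^{-1})$, uniformly for $F=o((\log n)^{3/2})$. Only after this may you apply the second item of \Cref{lemma::Self-reduction lemma}; it involves $M$ alone and so enters multiplicatively, giving the stated asymptotics.

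Your two fixes can also be combined. Once $e^{aj}$ is removed, the residual is $O(F^2\log n/n)$, which does beat $(1+2\gamma)^F$ when $F\le\log n/(\log\log\log n)^2$. This again comes at the price of weakening the lemma and re-tuning \Cref{prop:key}.
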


\begin{proof}
Notice that $\nu^T(\sigma) = 0$ if $T \not\subseteq \mathrm{Fix}(\sigma)$. Thus, we can only consider $\sigma \in \Omega_{t_T}$ such that $[M], T \subseteq \mathrm{Fix}(\sigma)$ and $|\mathrm{Fix}(\sigma)| \leq o((\log n)^{3/2})$, we see that
    \[
    \nu^T(\sigma) = \sum_{r=0}^{|\mathrm{Fix}(\sigma)| - |T|}\frac{\mathbf{P}(\mathrm{Pois}(\gamma_{n-|T|, t_T}) = r)}{\mathbf{P}(\mathrm{Pois}(\gamma_{n-|T|, t_T}) \leq n-|T|)}\frac{2\binom{|\mathrm{Fix}(\sigma)| -|T|}{r}}{\binom{n-|T|}{r}(n-|T|-r)!}.
    \]
Because $\gamma_{n-|T|, t_T} \leq \sqrt{\log \log n}$ and $|T| \leq o((\log n)^{3/2})$, we have that
\[
\mathbf{P}(\mathrm{Pois}(\gamma_{n-|T|, t_T}) > n-|T|) \leq \left( \frac{e\sqrt{\log \log n}}{n-|T|} \right)^{n-|T|} = O(n^{-1}).
\]
Also,
\[
\frac{\mathbf{P}(\mathrm{Pois}(\gamma_{n-|T|, t_T}) = r)}{n^{|T|-M}\binom{n-|T|}{r}(n-|T|-r)!} = \frac{e^{-\gamma_{n-|T|, t_T}}\gamma_{n-|T|, t_T}^r}{r!(n-|T|-r)!\binom{n-|T|}{r}n^{|T|-M}} = (1+n^{-1+o(1)})\frac{e^{-\gamma_{n-|T|, t_T}}\gamma_{n-|T|, t_T}^r}{(n-M)!}.
\]
Thus,
\[
\nu^T(\sigma) = (1+n^{-1+o(1)})\frac{e^{-\gamma_{n-|T|, t_T}}n^{|T|-M}}{(n-M)!} \sum_{r=0}^{|\mathrm{Fix}(\sigma)| - |T|} 2\gamma_{n-|T|, t_T}^r{\binom{|\mathrm{Fix}(\sigma)| -|T|}{r}}.
\]
Apply the first and second item of Lemma \ref{lemma::Self-reduction lemma} to the following expression, we have
\[
\begin{split}
    &(1+n^{-1+o(1)})(n-M)!\sum_{[M] \subseteq T \subseteq \mathrm{Fix}(\sigma)}(-1)^{|T| - M}\mathbf{P}[\mathcal{G}(T)]\nu^T(\sigma) \\
    &= (1+(\log n)^{-1/2+o(1)})\mathbf{P}[\mathcal{F}(M)]\sum_{[M] \subseteq T \subseteq \mathrm{Fix}(\sigma)}(-\gamma_{n-|T|, t_T})^{|T| - M}\sum_{r=0}^{|\mathrm{Fix}(\sigma)| - |T|} 2 \gamma_{n-|T|, t_T}^{r}\binom{|\mathrm{Fix}(\sigma)| -|T|}{r}\\
    & = (2+(\log n)^{-1/2+o(1)})\mathbf{P}[\mathcal{F}(M)]\sum_{[M] \subseteq T \subseteq \mathrm{Fix}(\sigma)}(-\gamma_{n-|T|, t_T})^{|T| - M}(1+\gamma_{n-|T|, t_T})^{|\mathrm{Fix}(\sigma)| - |T|}\\
    & = (2+(\log n)^{-1/2+o(1)})\mathbf{P}[\mathcal{F}(M)]\sum_{|T| - M \geq 0} \binom{|\mathrm{Fix}(\sigma)| - M}{|T|-M}
    (-\gamma_{n-|T|, t_T})^{|T|-M}(1+\gamma_{n-|T|, t_T})^{|\mathrm{Fix}(\sigma)| - |T|}\\
    &= (2+(\log n)^{-1/2+o(1)})\mathbf{P}[\mathcal{F}(M)].
\end{split}
\]
Here, we used binomial theorem in the second and fourth lines.
\end{proof}

\begin{lemma}\label{lem: conditioned}
Fix $t$ such that $|t'|\leq \frac{n}{k}(\frac{1}{2}\log\log\log n-2\log\log\log\log n)$. For $T \subset [n]$, let $\tilde{X}_t^{T}$ denote the random variable $X_t$ conditioned on the event $\mathcal{G}(T)$. Let $\nu_t^T$ denote the distribution described in \Cref{def: nu^T_t}. If $|T| = n^{o(1)}$, then
\[
d_{\mathrm{TV}}(\tilde{X}_t^T, \nu_t^T) \leq o\left((\log n)^{-5/2}\right).
\]
\end{lemma}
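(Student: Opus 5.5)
Conditioned on $\mathcal{G}(T)$, each step of the walk is a uniformly random $k$-cycle on $[n]\setminus T$. The increments are i.i.d.\ with law $P_n$ conditioned on avoiding $T$, which is exactly the uniform measure on $k$-cycles of $\mathfrak{S}_{[n]\setminus T}$. Hence $\tilde X_t^T$ has the law of the random $k$-cycle walk on $n_T:=n-|T|$ cards, run for $t$ steps and embedded into $\mathfrak{S}_n$ by fixing $T$. The plan is therefore to apply \Cref{thm: restate of X and nu} to the sequence $(n_T,k,t)$ in place of $(n,k,t)$, and to check that the resulting comparison measure $\nu_t$ on $n_T$ cards is close to $\nu^T_t$.

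\textbf{Step 1 (hypotheses of \Cref{thm: restate of X and nu}).} We have $n_T=n-n^{o(1)}\to\infty$, and $k=o(n/(\log n)^4)=o(n_T/(\log n_T)^4)$. Writing $t''=t-\frac{n_T\log n_T}{k}$, we get
$$t''=t'+\frac{n\log n-n_T\log n_T}{k}=t'+O\!\left(\frac{|T|\log n}{k}\right).$$
Since $|T|\log n=n^{o(1)}$, this is smaller than $\frac{n}{k}\log\log\log\log n$ by a huge margin. The hypothesis $|t'|\le \frac nk(\frac12\log\log\log n-2\log\log\log\log n)$ was chosen exactly to leave this room, so $|t''|\le \frac{n_T}{2k}\log\log\log n_T$. (Note that $\frac{n_T}{2k}\log\log\log n_T$ differs from $\frac n{2k}\log\log\log n$ only by a relative factor $1-n^{-1+o(1)}$.) \Cref{thm: restate of X and nu} therefore gives $d_{TV}(\tilde X^T_t,\nu'_t)=o((\log n_T)^{-5/2})=o((\log n)^{-5/2})$. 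Here $\nu'_t$ is the measure of \Cref{defi: construction of nu} on $[n]\setminus T$, with Poisson parameter $\gamma'=e^{-kt''/n_T}$ and the same parity rule, since the parity of $t$ is unchanged.

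\textbf{Step 2 (compare $\nu'_t$ with $\nu^T_t$).} Both measures are mixtures over $M$ of the same conditional laws (a uniform $M$-subset of fixed points, then uniform on the correct parity class of the rest). Their total variation distance is therefore at most the total variation distance between the truncated Poisson laws with parameters $\gamma'$ and $\gamma_{n-|T|,t_T}=\gamma_{n,t}$, using \eqref{eq: gamma_T}. I read $t$ as the time in question, so the parameter of $\nu_t^T$ is $\exp(-kt'/n)$. The Poisson total variation distance is $O(|\gamma'-\gamma|)$, and $\gamma\le(\log\log n)^{1/2}$. We have
$$\frac{kt''}{n_T}-\frac{kt'}{n}=O\!\left(\frac{|T|\log n}{n_T}+\frac{k|t'||T|}{n^2}\right)=n^{-1+o(1)},$$
so $|\gamma'-\gamma|=n^{-1+o(1)}$, which is negligible.

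\textbf{Main obstacle.} The delicate point is bookkeeping. One must check that the conditional increment law is exactly uniform on $k$-cycles of $[n]\setminus T$; this holds because $P_n$ is uniform on its support. One must also check that the hypothesis on $t'$ after the shift by $O(|T|\log n/k)$ still lies inside the window where \Cref{thm: restate of X and nu} applies. If the intended meaning of $\nu_t^T$ uses time $t_T$ rather than $t$, the same Poisson-parameter comparison applies, because \eqref{eq: gamma_T} shows the parameters agree exactly; only the parity convention needs to be matched to the actual time $t$.
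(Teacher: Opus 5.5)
Your proposal is correct and follows the paper's proof essentially step for step. You identify the conditioned walk with the $k$-cycle walk on $n-|T|$ cards, use the $\frac{2n}{k}\log\log\log\log n$ slack in the hypothesis on $t'$ to absorb the $O(|T|\log n/k)$ time shift and land in the window of \Cref{thm: restate of X and nu}, and then pass to $\nu_t^T$ by showing the two Poisson parameters differ by $n^{-1+o(1)}$; the paper does this last comparison in two stages, going through $\exp(-kt'/(n-|T|))$, instead of your single estimate, which is only a cosmetic difference.
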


\begin{proof}
    Conditional on the event $\mathcal{G}(T)$, every sampled $k$-cycle is supported on $[n]\setminus T$, and the induced process on $[n]\setminus T$ is exactly the random $k$-cycle walk on $n-|T|$ points, run for $t$ steps.
    On the one hand, recall from \eqref{eq: gamma_T} that our times $t,t_T$ are designed to satisfy
    $$
        \gamma_{n-|T|,t_T}
        =
        \exp\left(
        -\frac{k}{n-|T|}\cdot \frac{n-|T|}{n}t'
        \right)
        =
        \exp\left(-\frac{kt'}{n}\right)
        =
        \gamma_{n,t}.
    $$

    On the other hand, when we apply \Cref{thm: X and nu} to the conditioned walk on
    $[n]\setminus T$ at the actual time $t$, the corresponding time shift is
    $$
        t-\frac{(n-|T|)\log(n-|T|)}{k}.
    $$
    Since
    $$
        t-\frac{(n-|T|)\log(n-|T|)}{k}=t'+O\left(\frac{|T|\log n}{k}+1\right),
    $$
    the corresponding Poisson parameter differs from
    $$
        \exp\left(-\frac{kt'}{n-|T|}\right)
    $$
    by a multiplicative factor $1+n^{-1+o(1)}$. It remains to compare
    $\exp(-kt'/n)$ and $\exp(-kt'/(n-|T|))$. We have
    \begin{align}
        \left|
        \exp\left(-\frac{kt'}{n}\right)
        -
        \exp\left(-\frac{kt'}{n-|T|}\right)
        \right|
        &\leq
        \exp\left(\frac{k|t'|}{n}\right)
        \left|
        1-\exp\left(-\frac{kt'|T|}{n(n-|T|)}\right)
        \right| \notag\\
        &\leq
        2\exp\left(\frac{k|t'|}{n}\right)
        \frac{k|t'||T|}{n(n-|T|)} \notag\\
        &= n^{-1+o(1)}.
    \end{align}
    Therefore, by the standard coupling of Poisson random variables with nearby means, the total variation distance between the two corresponding auxiliary measures is $n^{-1+o(1)}$.

    It remains to check that the actual time $t$ lies in the window where \Cref{thm: X and nu}
    applies to the random $k$-cycle walk on $n-|T|$ points. We need
    \begin{equation}
        \left|
        t-\frac{(n-|T|)\log(n-|T|)}{k}
        \right|
        \leq
        \frac{n-|T|}{2k}\log\log\log (n-|T|).
    \end{equation}
    Since $t=\frac{n\log n}{k}+t'$, it suffices to compare $n\log n$ with $(n-|T|)\log(n-|T|)$. We have
    \begin{equation}
        \left|
        \frac{n\log n-(n-|T|)\log(n-|T|)}{k}
        \right|
        \leq
        O\left(\frac{|T|\log n}{k}\right)
        =
        o\left(\frac{n}{k}\log\log\log\log n\right),
    \end{equation}
    while
    \begin{equation}
        \frac{n-|T|}{2k}\log\log\log(n-|T|)
        =
        \frac{n}{2k}\log\log\log n
        +
        o\left(\frac{n}{k}\log\log\log\log n\right).
    \end{equation}
    Hence the assumed bound on $t'$ implies the desired window condition for $n-|T|$. Applying \Cref{thm: X and nu} to the conditioned walk on $[n]\setminus T$, and then using
    the above comparison of the Poisson parameters, gives
    $$
        d_{\mathrm{TV}}\left(\widetilde X_t^T,\nu_t^T\right)
        \leq
        o\left((\log n)^{-5/2}\right)+n^{-1+o(1)}
        =
        o\left((\log n)^{-5/2}\right).
    $$
    This completes the proof.
\end{proof}

For each integer \(t\), define the parity class
$$
\Omega_t:=
\begin{cases}
\mathfrak A_n, & k \text{ is odd}\\
\mathfrak A_n, & k \text{ is even and } t \text{ is even}\\
\mathfrak A_n^c, & k \text{ is even and } t \text{ is odd}
\end{cases}.
$$
For a subset \(M\subseteq [n]\), set
$$
    \Omega_t(M):=\{\sigma\in \Omega_t:\sigma(i)=i\text{ for every }i\in M\}.
$$
Let \(\mu_t^M\) denote the uniform measure on \(\Omega_t(M)\). Equivalently, \(\mu_t^M\) is obtained by sampling uniformly from the appropriate parity class on \([n]\setminus M\), and then extending the permutation to \([n]\) by fixing every point of \(M\).

In the following proposition, we show that when the time is close to $n\log n/k$, the random walk is nearly uniform after conditioning on the event that the set of untouched cards is exactly $M$.

\begin{prop}\label{prop:key}
    Fix $t$ and $M\subseteq [n]$ such that
    $$
        |t'|\leq \frac{n}{k}
        \left(
        \frac{1}{2}\log\log\log n
        -2\log\log\log\log n
        \right)
        \qquad\text{and}\qquad
        |M|\leq \log n.
    $$
    Let $X_t^M$ denote the random variable $X_t$ conditioned on the event
    $\mathcal F(M)$. Then
    $$
        d_{\mathrm{TV}}(X_t^M,\mu_t^M)
        \leq
        (\log n)^{-1/2+o(1)}.
    $$
\end{prop}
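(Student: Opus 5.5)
By symmetry it suffices to take $M=[M]$, the first $|M|$ cards. I will follow the Jain--Sawhney inclusion--exclusion strategy: express the law of $X_t$ on $\mathcal F(M)$ through the laws on the events $\mathcal G(T)$, replace each conditioned law by $\nu^T$ using \Cref{lem: conditioned}, and identify the resulting combination as nearly uniform using \Cref{lem: relation of T and M}.

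\textbf{Inclusion--exclusion and bookkeeping.} For $\sigma\in\Omega_t$, inclusion--exclusion gives
$$\mathbf P[X_t=\sigma,\mathcal F(M)]=\sum_{T\supseteq[M]}(-1)^{|T|-|M|}\mathbf P[\mathcal G(T)]\,\mathbf P[\tilde X_t^T=\sigma].$$
On $\mathcal G(T)$ every card of $T$ is fixed, so only $T\subseteq\mathrm{Fix}(\sigma)$ contribute.

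I will restrict to $\sigma$ with $|\mathrm{Fix}(\sigma)|\le L:=(\log n)^{1.4}$. Under $\mu_t^M$ the number of fixed points is $|M|$ plus an approximately $\mathrm{Pois}(1)$ variable, so the discarded $\sigma$ carry negligible $\mu_t^M$-mass. Under $X_t^M$ the same holds because, by \Cref{lem: conditioned}, $\tilde X^T_t$ is close to $\nu^T$, and $\nu^T$ has Poisson fixed-point tails.

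For the remaining $\sigma$, \Cref{lem: relation of T and M} (with $t_T$ replaced by $t$, which costs a factor $1+n^{-1+o(1)}$ via \eqref{eq: gamma_T}) gives
$$\sum_{T}(-1)^{|T|-|M|}\mathbf P[\mathcal G(T)]\,\nu^T(\sigma)=(2+(\log n)^{-1/2+o(1)})\,\frac{\mathbf P[\mathcal F(M)]}{(n-|M|)!}.$$
The right-hand side is $\mathbf P[\mathcal F(M)]\,\mu_t^M(\sigma)$ up to a relative error $(\log n)^{-1/2+o(1)}$, since $|\Omega_t(M)|=(n-|M|)!/2$.

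\textbf{Error terms.} Subtracting, and summing over $\sigma$ with absolute values,
$$2\,\mathbf P[\mathcal F(M)]\,d_{TV}(X^M_t,\mu^M_t)\le \sum_{T\supseteq[M],\,|T|\le L}\mathbf P[\mathcal G(T)]\sum_{\sigma}\bigl|\mathbf P[\tilde X^T_t=\sigma]-\nu^T(\sigma)\bigr|+(\log n)^{-1/2+o(1)}\,\mathbf P[\mathcal F(M)]+\text{tail}.$$
I bound the inner sum by $2d_{TV}(\tilde X^T_t,\nu^T)=o((\log n)^{-5/2})$ using \Cref{lem: conditioned}. The window hypothesis on $t'$ is exactly the one that lemma needs, and $|T|\le L=n^{o(1)}$. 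Then item (3) of \Cref{lemma::Self-reduction lemma} bounds $\sum_T\mathbf P[\mathcal G(T)]$ by $(\log n)^{o(1)}\mathbf P[\mathcal F(M)]$. Dividing by $\mathbf P[\mathcal F(M)]$ gives the claimed $(\log n)^{-1/2+o(1)}$.

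\textbf{Main obstacle.} The delicate step is the exchange of summations: taking absolute values inside the alternating sum discards the cancellation. This is harmless only because item (3) controls the total weight $\sum_T\mathbf P[\mathcal G(T)]$ by $(\log n)^{o(1)}\mathbf P[\mathcal F(M)]$, and the per-$T$ total variation error $(\log n)^{-5/2}$ is small enough to absorb it. The truncation to $|T|\le L$ must be handled carefully, since there are many sets $T$ and $\sigma$ with many fixed points. I would handle it with the Bonferroni inequalities as in \eqref{eq: Bonferroni}, so that truncated terms enter with a definite sign, and bound their total contribution by Poisson tails of order $\gamma^{L}/L!=n^{-\omega(1)}$.
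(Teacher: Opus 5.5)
Your proposal is correct and is essentially the paper's argument. The paper likewise discards permutations with many fixed points, using the threshold $3\log n$ instead of your $(\log n)^{1.4}$, and controls that piece via $\mathcal F(M)\subseteq\mathcal G(M)$, \Cref{lem: conditioned} with $T=M$, and $\mathbf P(\mathcal G(M))\le(\log n)^{o(1)}\mathbf P(\mathcal F(M))$; you should state this transfer explicitly, since \Cref{lem: conditioned} concerns conditioning on $\mathcal G$, not $\mathcal F$. It then handles the remaining $\sigma$ by inclusion--exclusion, \Cref{lem: relation of T and M}, \Cref{lem: conditioned} and item (3) of \Cref{lemma::Self-reduction lemma}, exactly as you do. The Bonferroni truncation you anticipate is unnecessary: once $|\operatorname{Fix}(\sigma)|\le L$, only sets $T$ with $M\subseteq T\subseteq\operatorname{Fix}(\sigma)$ contribute, so the inclusion--exclusion sum is already finite and exact, with $|T|\le L$.
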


\begin{proof}
    By the definition of total variation distance,
    \begin{align*}
        d_{\mathrm{TV}}(X_t^M,\mu_t^M)
        &=
        \frac{1}{2}
        \sum_{\sigma\in \Omega_t(M)}
        \left|
        \mathbf P(X_t^M=\sigma)-\mu_t^M(\sigma)
        \right| \\
        &=
        \frac{1}{2}
        \sum_{\sigma\in \Omega_t(M)}
        \left|
        \frac{\mathbf P(\{X_t=\sigma\}\cap \mathcal F(M))}
        {\mathbf P(\mathcal F(M))}
        -
        \frac{2}{(n-|M|)!}
        \right|.
    \end{align*}
    Therefore, it suffices to prove
    $$
        \sum_{\sigma\in \Omega_t(M)}
        \left|
        \mathbf P(\{X_t=\sigma\}\cap \mathcal F(M))
        -
        \frac{2\mathbf P(\mathcal F(M))}{(n-|M|)!}
        \right|
        \leq
        (\log n)^{-1/2+o(1)}\mathbf P(\mathcal F(M)).
    $$

    We split $\Omega_t(M)$ according to the number of fixed points. Let
    $$
        S:=
        \Omega_t(M)\cap
        \{\sigma\in \Omega_t:|\operatorname{Fix}(\sigma)|\leq 3\log n\},
        \qquad
        L:=\Omega_t(M)\setminus S.
    $$
    We bound the contributions from $L$ and $S$ separately.

    \medskip
    \noindent
    \textbf{Contribution from $L$.}
    By \Cref{lem: conditioned}, we have
    $$
        \sum_{\sigma\in \Omega_t(M)}
        \left|
        \mathbf P(\widetilde X_t^M=\sigma)-\nu^M(\sigma)
        \right|
        =
        o\left((\log n)^{-5/2}\right).
    $$
    Recall that
    $$
        t_M=\frac{(n-|M|)\log(n-|M|)}{k}
        +\frac{n-|M|}{n}t',
    $$
    and that $\nu^M$ is obtained by first sampling a Poisson number of additional
    fixed points in $[n]\setminus M$, with parameter $\gamma_{n-|M|,t_M}$, and then
    sampling uniformly from the appropriate parity class on the remaining points.
    Hence
    \begin{align*}
        \sum_{\sigma\in L}\mathbf P(\widetilde X_t^M=\sigma)
        &\leq
        \sum_{\sigma\in L}
        \left|
        \mathbf P(\widetilde X_t^M=\sigma)-\nu^M(\sigma)
        \right|
        +\nu^M(L) \\
        &\leq
        o\left((\log n)^{-5/2}\right)
        +
        \mathbf P\left(\operatorname{Pois}(\gamma_{n-|M|,t_M})\geq \log n\right) +
        \sup_{\substack{Q\subseteq [n]\setminus M\\ |Q|\leq \log n}}
        \mu_t^{M\cup Q}(L).\\
    \end{align*}
    Since
    $$
        \gamma_{n-|M|,t_M}
        =
        \gamma_{n,t}
        \leq
        \exp\left(
        \frac{1}{2}\log\log\log n
        -2\log\log\log\log n
        \right)
        \leq
        (\log\log n)^{1/2+o(1)},
    $$
    Chernoff's bound gives
    $$
        \mathbf P\left(\operatorname{Pois}(\gamma_{n-|M|,t_M})\geq \log n\right)
        \leq
        \left(
        \frac{e\sqrt{\log\log n}}{\log n}
        \right)^{\log n}
        \leq
        (\log n)^{-1/2+o(1)}.
    $$
    Moreover, uniformly over all $Q\subseteq [n]\setminus M$ with $|Q|\leq \log n$, a uniform permutation on $[n]\setminus (M\cup Q)$ has at least $\log n$ fixed
    points with probability at most $
    \frac{2}{(\log n)!}\leq(\log n)^{-1/2+o(1)}$. Therefore,
    $$
        \sum_{\sigma\in L}\mathbf P(\widetilde X_t^M=\sigma)
        \leq
        (\log n)^{-1/2+o(1)}.
    $$

    Since $\mathcal F(M)\subseteq \mathcal G(M)$, we have
    \begin{align*}
        \sum_{\sigma\in L}
        \mathbf P(\{X_t=\sigma\}\cap \mathcal F(M))
        &\leq
        \sum_{\sigma\in L}
        \mathbf P(\{X_t=\sigma\}\cap \mathcal G(M)) \\
        &=
        \mathbf P(\mathcal G(M))
        \sum_{\sigma\in L}
        \mathbf P(\widetilde X_t^M=\sigma) \\
        &\leq
        (\log n)^{-1/2+o(1)}\mathbf P(\mathcal G(M)).
    \end{align*}
    By \Cref{lemma::Self-reduction lemma},
    $$
        \mathbf P(\mathcal G(M))
        \leq
        (\log n)^{o(1)}\mathbf P(\mathcal F(M)).
    $$
    Hence
    $$
        \sum_{\sigma\in L}
        \mathbf P(\{X_t=\sigma\}\cap \mathcal F(M))
        \leq
        (\log n)^{-1/2+o(1)}\mathbf P(\mathcal F(M)).
    $$
    On the other hand, since $\mu_t^M$ is uniform on $\Omega_t(M)$,
    $$
        \sum_{\sigma\in L}
        \frac{2\mathbf P(\mathcal F(M))}{(n-|M|)!}
        =
        \mathbf P(\mathcal F(M))\mu_t^M(L)
        \leq
        (\log n)^{-1/2+o(1)}\mathbf P(\mathcal F(M)).
    $$
    Combining the preceding two estimates gives
    $$
        \sum_{\sigma\in L}
        \left|
        \mathbf P(\{X_t=\sigma\}\cap \mathcal F(M))
        -
        \frac{2\mathbf P(\mathcal F(M))}{(n-|M|)!}
        \right|
        \leq
        (\log n)^{-1/2+o(1)}\mathbf P(\mathcal F(M)).
    $$

    \medskip
    \noindent
    \textbf{Contribution from $S$.}
    For $\sigma\in S$, the inclusion-exclusion identity gives
    $$
        \mathbf 1_{\mathcal F(M)}
        =
        \sum_{T\supseteq M}
        (-1)^{|T|-|M|}\mathbf 1_{\mathcal G(T)}.
    $$
    Since $\nu^T(\sigma)=0$ unless $T\subseteq \operatorname{Fix}(\sigma)$, we may restrict
    the sum to $M\subseteq T\subseteq \operatorname{Fix}(\sigma)$. By \Cref{lem: relation of T and M},
    for every $\sigma\in S$ we have
    $$
        \sum_{M\subseteq T\subseteq \operatorname{Fix}(\sigma)}
        (-1)^{|T|-|M|}
        \mathbf P(\mathcal G(T))\nu^T(\sigma)
        =
        \left(2+(\log n)^{-1/2+o(1)}\right)
        \frac{\mathbf P(\mathcal F(M))}{(n-|M|)!}.
    $$
    Therefore,
    \begin{align*}
        &\sum_{\sigma\in S}
        \left|
        \mathbf P(\{X_t=\sigma\}\cap \mathcal F(M))
        -
        \frac{2\mathbf P(\mathcal F(M))}{(n-|M|)!}
        \right| \\
        &\leq
        (\log n)^{-1/2+o(1)}\mathbf P(\mathcal F(M))+
        \sum_{\sigma\in S}
        \sum_{M\subseteq T\subseteq \operatorname{Fix}(\sigma)}
        \left|
        \mathbf P(\{X_t=\sigma\}\cap \mathcal G(T))
        -
        \mathbf P(\mathcal G(T))\nu^T(\sigma)
        \right| \\
        &\leq
        (\log n)^{-1/2+o(1)}\mathbf P(\mathcal F(M))\quad+
        \sum_{\substack{T\supseteq M\\ |T|\leq 3\log n}}
        \mathbf P(\mathcal G(T))
        \sum_{\sigma\in S}
        \left|
        \mathbf P(X_t=\sigma\mid \mathcal G(T))
        -
        \nu^T(\sigma)
        \right|. \\
    \end{align*}
    By \Cref{lem: conditioned}, for every $T$ appearing in the last sum,
    $$
        \sum_{\sigma\in S}
        \left|
        \mathbf P(X_t=\sigma\mid \mathcal G(T))
        -
        \nu^T(\sigma)
        \right|
        \leq
        2d_{\mathrm{TV}}(\widetilde X_t^T,\nu^T)
        =
        o\left((\log n)^{-5/2}\right).
    $$
    Hence, using the last item of \Cref{lemma::Self-reduction lemma},
    \begin{align*}
        &\sum_{\sigma\in S}
        \left|
        \mathbf P(\{X_t=\sigma\}\cap \mathcal F(M))
        -
        \frac{2\mathbf P(\mathcal F(M))}{(n-|M|)!}
        \right| \\
        &\leq
        (\log n)^{-1/2+o(1)}\mathbf P(\mathcal F(M))
        +
        o\left((\log n)^{-5/2}\right)
        \sum_{\substack{T\supseteq M\\ |T|\leq 3\log n}}
        \mathbf P(\mathcal G(T)) \\
        &\leq
        (\log n)^{-1/2+o(1)}\mathbf P(\mathcal F(M)).
    \end{align*}

    Combining the contributions from $S$ and $L$ proves the desired estimate, and hence
    completes the proof.
\end{proof}

\subsection{The marking schemes for odd and even $k$-cycles}\label{subsec: marking schemes}

In this subsection, we introduce the marking schemes used in the proof of \Cref{thm: hitting time mixing_main thm}. The role of these schemes is to turn the conditional uniformity statement from \Cref{prop:key} into a stopped-time statement. We start the marking procedure at the deterministic time
$t^*$ defined in \eqref{eq: starting time of marking scheme}. At this time, all cards which have already been touched are declared marked, and the remaining cards are marked one by one during the subsequent evolution.

The construction has to serve two purposes at once. On the one hand, it should remain close to the genuine touching process of the random $k$-cycle walk, so that the terminal marking time is close to the hitting time $\tau$. On the other
hand, it should be artificial enough to preserve an exact conditional uniformity property after conditioning on the current marked set. We therefore use two related marking schemes. Marking scheme B uses the original transition kernel $P_n$ and is coupled directly to the original walk. Marking scheme A uses a
modified time-inhomogeneous transition kernel $Q_{n,t}$, designed so that an auxiliary process remains conditionally uniform on the appropriate parity class.

More precisely, the proof will compare four processes
$$
\rho_t\longleftrightarrow Y_t\longleftrightarrow Z_t\longleftrightarrow X_t .
$$
Here $X_t$ is the original random $k$-cycle walk. The process $Z_t$ uses marking
scheme B and is therefore closest to $X_t$. The process $Y_t$ uses marking scheme
A and is coupled to $Z_t$ by comparing the kernels $Q_{n,t}$ and $P_n$.
Finally, $\rho_t$ uses the same $Q_{n,t}$-moves and the same marked sets as
$Y_t$, but its action on the permutation is modified in order to maintain exact
conditional uniformity. Thus $\rho_t$ is the process which supplies the uniform
distribution at the terminal marking time, while $Y_t$ and $Z_t$ bridge it back
to the original chain.

For the rest of this subsection, let $S_{t^*}$ denote the set of elements in
$[n]$ which have not been touched by time $t^*$. For $t\ge t^*$, let $S_t$
denote the set of unmarked elements and let
$$
M_t=[n]\setminus S_t
$$
be the marked set. The odd and even cases have the same basic marking structure:
a useful update is one which contains exactly one element of $S_t$, in which
case that element is marked. The only additional issue is parity. When $k$ is
odd, each $k$-cycle is an even permutation, so the walk stays in a fixed parity
class. When $k$ is even, each $k$-cycle is odd, so the auxiliary chain must also
change parity at every step. This is the reason for the parity-correcting
transpositions supported on the marked set in the definition of $Q_{n,t}$ below.

\begin{defi}[The modified transition kernel $Q_{n,t}$]
For $t\ge t^*$, set
$$
C_t=(|M_t|-1)_{k-2},
\qquad
q_t=\frac{|M_t|}{|M_t|+1}
\left(\frac{n!}{k(n-k)!}\right)^{-1}.
$$
If $k$ is odd, then the kernel $Q_{n,t}$ is defined by
\[
Q_{n,t}=\begin{cases}
(i\cdots i) & \text{with probability } C_tq_t \ \text{for } i\in S_t,\\[6pt]
(i_1\cdots i_k) & \text{with probability } q_t\ \text{for } i_1\in S_t,\ i_2,\ldots,i_k\in M_t,\\[6pt]
(i_1\cdots i_k) & \text{with probability } \left(\dfrac{n!}{k(n-k)!}\right)^{-1}\ \text{otherwise}.
\end{cases}
\]
If $k$ is even, then the kernel $Q_{n,t}$ is defined by
\[
Q_{n,t}=\begin{cases}
(ab)(i\cdots i) & \text{with probability } \dfrac{C_tq_t}{\binom{|M_t|}{2}}
\ \text{for } i\in S_t,\ \{a,b\}\subset M_t,\\[8pt]
(i_1\cdots i_k) & \text{with probability } q_t\ \text{for } i_1\in S_t,\ i_2,\ldots,i_k\in M_t,\\[6pt]
(i_1\cdots i_k) & \text{with probability } \left(\dfrac{n!}{k(n-k)!}\right)^{-1}\ \text{otherwise}.
\end{cases}
\]

Here $(i\cdots i)$ is a labelled dummy move. In the odd case it acts as the identity. In the even case $(ab)(i\cdots i)$ acts on the permutation as the transposition $(ab)$ and carries the label $i$, which indicates the element to be marked. Throughout the marking argument we restrict to the event $|S_{t^*}|\leq \log n$, whose complement will be handled separately. Under both marking schemes the number of unmarked elements never increases, so for all sufficiently large $n$ we have $|M_t|\geq n-\log n$ for every $t\geq t^*$. Hence all quantities appearing in the definition of $Q_{n,t}$ are well-defined.

Let us check that $Q_{n,t}$ is indeed a probability kernel. The total mass assigned by the original uniform $k$-cycle kernel to cycles with exactly one element in $S_t$ is
$$
|S_t|\cdot|M_t|\cdot(|M_t|-1)_{k-2}
\left(\frac{n!}{k(n-k)!}\right)^{-1}=|S_t|\cdot|M_t|\cdot C_t\cdot
\left(\frac{n!}{k(n-k)!}\right)^{-1}.
$$
Under $Q_{n,t}$, this mass is replaced by two contributions. The ordinary one-unmarked $k$-cycles have total mass $|S_t|\cdot|M_t|\cdot C_t q_t$,
while the dummy moves have total mass
$|S_t|\cdot C_t q_t$ in the odd case, and
$$
|S_t|\cdot \binom{|M_t|}{2}\cdot\frac{C_tq_t}{\binom{|M_t|}{2}}=|S_t|\cdot C_t q_t
$$
in the even case. Hence the total replacement mass is
$$
|S_t|\cdot (|M_t|+1)\cdot C_tq_t=|S_t|\cdot|M_t|\cdot C_t
\left(\frac{n!}{k(n-k)!}\right)^{-1},
$$
by the definition of $q_t$. All other $k$-cycles retain their original mass. Therefore the total mass of $Q_{n,t}$ is indeed one.

\end{defi}

\begin{defi}[Marking scheme A]
At time $t^*$, mark all elements in $[n]\setminus S_{t^*}$. At time $t+1$,
sample a move $\sigma$ from the kernel $Q_{n,t}$.

The marked set is updated as follows.
\begin{itemize}
\item If the sampled move is a dummy move labelled by $i\in S_t$, then mark
$i$.
\item If $\sigma=(i_1\cdots i_k)$ with $i_1\in S_t$ and
$i_2,\ldots,i_k\in M_t$, then mark $i_1$. Thus, $M_{t+1} = M_t \cup \{i_1\}$ and $S_{t+1} = S_t \setminus \{i_1\}$.
\item In all remaining cases, namely when the ordinary $k$-cycle does not
mark exactly one new element, pull back the marked set under $\sigma$:
$$
M_{t+1}=\{i\in[n]:\sigma(i)\in M_t\}=\sigma^{-1}(M_t),
\qquad
S_{t+1}=\{i\in[n]:\sigma(i)\in S_t\}=\sigma^{-1}(S_t).
$$
\end{itemize}
\end{defi}

\begin{defi}[Marking scheme B]
At time $t^*$, mark all elements in $[n]\setminus S_{t^*}$. At time $t+1$,
sample a genuine $k$-cycle
$$
\sigma=(i_1\cdots i_k)
$$
from the original transition kernel $P_n$.

The marked set is updated by the same rule as in marking scheme A, except that
there are no dummy moves. Thus, if exactly one point of the cycle is in $S_t$,
say $i_1\in S_t$ and $i_2,\ldots,i_k\in M_t$, then mark $i_1$ and let $M_{t+1} = M_t \cup \{i_1\}$ and $S_{t+1} = S_t \setminus \{i_1\}$. Otherwise, pull
back the marked set:
$$
M_{t+1}=\sigma^{-1}(M_t),
\qquad
S_{t+1}=\sigma^{-1}(S_t).
$$
\end{defi}

We now define the coupled processes which will be compared in the proof. Let
$$
\rho_{t^*}=Y_{t^*}=Z_{t^*}
$$
be a common random variable sampled uniformly from
$\Omega_{t^*}(S_{t^*})$.

\begin{defi}[The coupled processes]
We define four Markov processes
$$
(X_t)_{t\ge t^*},\qquad
(Y_t)_{t\ge t^*},\qquad
(Z_t)_{t\ge t^*},\qquad
(\rho_t)_{t\ge t^*}
$$
on a common probability space as follows.
\begin{itemize}
\item The process $(X_t)_{t\ge t^*}$ is the original random $k$-cycle walk,
evolving according to the transition kernel $P_n$. Let $\tau$ denote
the first time at which all elements have been touched.

\item The process $(Z_t)_{t\ge t^*}$ is initialized by $Z_{t^*}$ and evolves
according to the original transition kernel $P_n$. Its marked set is
updated according to marking scheme B. Let $\tau_1$ denote the first time at
which all elements have been marked.

\item The process $(Y_t)_{t\ge t^*}$ is initialized by $Y_{t^*}$ and evolves
according to the modified transition kernel $Q_{n,t}$. Its marked set is
updated according to marking scheme A. Let $\tau_2$ denote the first time at
which all elements have been marked.

\item The process $(\rho_t)_{t\ge t^*}$ is initialized by $\rho_{t^*}$ and
uses the same $Q_{n,t}$-moves and the same marked sets as $Y_t$. Its action
on the current state is modified as follows.

At time $t+1$, sample $\sigma\sim Q_{n,t}$.

\begin{itemize}
    \item If the sampled move marks exactly one new element $i$, either through
    a dummy move or through a one-unmarked $k$-cycle, set
    $$
      M_{t+1}:=M_t \cup \{i\},
        \qquad
        S_{t+1}:=S_t \setminus \{i\}, \qquad  \rho_{t+1}:=\sigma\rho_t.
    $$

    \item If no new element is marked and
    $\operatorname{supp}(\sigma)\subset M_t$, set
    $$
       M_{t+1}:=M_t,
        \qquad
        S_{t+1}:=S_t, \qquad  \rho_{t+1}:=\sigma\rho_t.
    $$

    \item If no new element is marked and
    $\operatorname{supp}(\sigma)\not\subset M_t$, first update
    $$
        M_{t+1}:=\sigma^{-1}(M_t),
        \qquad
        S_{t+1}:=\sigma^{-1}(S_t).
    $$
    If $k$ is odd, set
    $$
        \rho_{t+1}:=\sigma^{-1}\rho_t\sigma.
    $$
    If $k$ is even, then, conditionally on $M_{t+1}$, choose an unordered pair
    $\{U_{t+1},V_{t+1}\}\subset M_{t+1}$ uniformly and independently of
    $\rho_t$, and set
    $$
        \rho_{t+1}:=(U_{t+1}\,V_{t+1})\sigma^{-1}\rho_t\sigma.
    $$
\end{itemize}
Let $\tau_3$ denote the first time at which all elements have been marked
for the process $\rho_t$.

\end{itemize}
\end{defi}

\subsection{Proof of \Cref{thm: hitting time mixing_main thm}}
Finally, we are ready for the proof of \Cref{thm: hitting time mixing_main thm}. First, we show that, conditioned on the marked set \(M_t\), the random permutation \(\rho_t\) is uniform on \(\Omega_t(S_t)\). In particular, at the marking time \(\tau_3\), all elements are marked, and hence \(\rho_{\tau_3}\sim U_{\mathfrak A_n}\) when $k$ is odd, and   \(\rho_{\tau_3}\sim \mathbf{P}(\tau_3 ~\text{even})\,U_{\mathfrak A_n}+\mathbf{P}(\tau_3 ~\text{odd})\,U_{\mathfrak A_n^c}\) when $k$ is even.

Second, we compare the auxiliary chains with the original chain. More precisely, we show that
\[
    d_{\mathrm{TV}}(\rho_{\tau_3},Y_{\tau_2}),\qquad
    d_{\mathrm{TV}}(Y_{\tau_2},Z_{\tau_1}),\qquad
    d_{\mathrm{TV}}(Z_{\tau_1},X_{\tau})
\]
are all small. Combining these estimates with the uniformity of \(\rho_{\tau_3}\), together with the fact that in the even-\(k\) case the hitting time has asymptotically balanced parity, will yield the desired hitting-time mixing statement. The start coupling time $t^*$ will again be the same as in \eqref{eq: starting time of marking scheme}. The following lemma is the key reason for introducing the modified process $\rho_t$: it shows that the artificial dynamics preserve exact conditional uniformity, so that once all elements have been marked, the terminal distribution is the desired uniform measure on the appropriate parity class.

\begin{lemma}\label{lem:uniform}
    For $t \geq t^*$, conditioned on the marked set $M_t$, $\rho_t$ is uniform on $\Omega_{t}(S_t)$. In particular, $\rho_{\tau_3}|\{\tau_3= t\}$ is uniform on $\Omega_t$. Consequently, if $k$ is odd, then
    \begin{align*}
        \rho_{\tau_3}\sim U_{\mathfrak{A}_n}.
    \end{align*}
    If $k$ is even, then
\begin{align*}
    \rho_{\tau_3}\sim \mathbf{P}(\tau_3\text{ is even}) \, U_{\mathfrak{A}_n}+\mathbf{P}(\tau_3\text{ is odd}) \, U_{\mathfrak{A}_n^c}.
\end{align*}
\end{lemma}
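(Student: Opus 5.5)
We will prove by induction on $t\ge t^*$ a slightly stronger statement. Let $\mathcal H_t$ be the $\sigma$-algebra generated by $S_{t^*}$, all moves sampled up to time $t$, and (for even $k$) the auxiliary pairs $\{U_s,V_s\}$ with $s\le t$. The claim is that, conditionally on $\mathcal H_t$, $\rho_t$ is uniform on $\Omega_t(S_t)$, i.e.\ uniform among permutations of the correct parity class that fix every unmarked card. The marked sets $M_s$, $s\le t$, are $\mathcal H_t$-measurable, so conditioning only on $M_t$ follows by averaging.

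The base case $t=t^*$ holds by the definition of $\rho_{t^*}$. For the induction step, we use that the move $\sigma\sim Q_{n,t}$ depends only on $M_t$ and fresh randomness, hence is independent of $\rho_t$ given $\mathcal H_t$. We then treat the three update rules for $\rho$ separately.

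\textbf{Support inside $M_t$, nothing marked.} Here $\rho_{t+1}=\sigma\rho_t$ with $\sigma\in\mathfrak S_{M_t}$. The sign of $\sigma$ is $(-1)^{k-1}$, so in every case $\sigma\Omega_t(S_t)=\Omega_{t+1}(S_t)$, and left translation is a bijection; uniformity is preserved.

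\textbf{Support meets $S_t$, nothing marked.} Conjugation $\pi\mapsto\sigma^{-1}\pi\sigma$ is a parity-preserving bijection from permutations fixing $S_t$ onto permutations fixing $\sigma^{-1}(S_t)=S_{t+1}$. For odd $k$ this already gives a bijection $\Omega_t(S_t)\to\Omega_{t+1}(S_{t+1})$. For even $k$ we compose with the transposition $(U_{t+1}V_{t+1})$. This transposition is supported in $M_{t+1}$ and chosen independently of $\rho_t$, so for each fixed pair the resulting map is a bijection onto $\Omega_{t+1}(S_{t+1})$. Uniformity is again preserved given $\mathcal H_{t+1}$.

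\textbf{A new card $i$ is marked.} This is the main step, and the one I expect to require care, since the move alone need not move $i$. The point is to condition on the event that $i$ is marked rather than on the specific move.
\begin{itemize}
\item The $k$-cycles $(i\,i_2\cdots i_k)$ with $i_2,\dots,i_k\in M_t$ each have weight $q_t$. For a fixed value of $i_2$ there are exactly $C_t=(|M_t|-1)_{k-2}$ such cycles, giving total weight $C_tq_t$ for each $i_2\in M_t$.
\item The dummy moves labelled $i$ also have total weight $C_tq_t$. In the even case this weight is spread uniformly over the transpositions $(ab)$ with $\{a,b\}\subset M_t$.
\item Hence, given that $i$ is marked, the value $\sigma(i)$ is uniform on $M_t\cup\{i\}=M_{t+1}$.
\end{itemize}
Now $\rho_t$ is uniform on $\Omega_t(S_t)$, which is the parity class of $\mathfrak A_{M_t}$ inside $\mathfrak S_{M_t}$, and $M_t$ is the stabiliser of $i$ in $M_{t+1}$. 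Therefore $\sigma\rho_t$ is uniform on the set of permutations of $M_{t+1}$ in $\Omega_{t+1}$ that send $i$ to $\sigma(i)$. This is the left coset determined by $\sigma(i)$ of the parity-$\Omega_t$ stabiliser of $i$; the parity comes out right because $\sigma$ has sign $(-1)^{k-1}$ in both the $k$-cycle and the $(ab)$-dummy cases.
\begin{itemize}
\item These cosets, over the $|M_{t+1}|$ possible values of $\sigma(i)$, partition $\Omega_{t+1}(S_{t+1})$ into blocks of equal size.
\item $\sigma(i)$ is uniform over $M_{t+1}$.
\end{itemize}
Together these show $\rho_{t+1}$ is uniform on $\Omega_{t+1}(S_{t+1})$, conditionally on the history up to $t$ and on the event that $i$ is marked. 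To keep the induction hypothesis in its stated form, we take $\mathcal H_{t+1}$ to record only the label of the marked card in this case, not the specific move; with that convention the step closes.

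\textbf{Conclusion.} $\tau_3$ is a stopping time for $(\mathcal H_t)$, and on $\{\tau_3=t\}$ we have $S_t=\emptyset$. Hence $\rho_{\tau_3}$ given $\{\tau_3=t\}$ is uniform on $\Omega_t$. Summing over $t$ gives $U_{\mathfrak A_n}$ for odd $k$, and for even $k$ the stated mixture weighted by the parity of $\tau_3$.
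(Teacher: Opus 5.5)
Your proof is correct and follows the paper's argument: the same induction with the same three update cases. Your observation that $\sigma(i)$ is uniform on $M_{t+1}$ given that $i$ is marked is exactly the paper's count that the moves with $\alpha(i)=\pi(i)$ carry total weight $C_tq_t$ for every target $\pi$. Your filtration, which records only the label at marking steps, adds some precision: it justifies passing to the stopped variable $\rho_{\tau_3}$, which the paper leaves implicit.
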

\begin{proof}
    At time $t^*$, the claim follows from the construction of $\rho_{t^*}$. Suppose that the claim holds at time $t\geq t^*$. We prove it at time $t+1$.

First consider the case in which no new element is marked. There are two subcases. If the sampled $k$-cycle satisfies $\operatorname{supp}(\sigma)\subset M_t$, then $M_{t+1} = M_t, S_{t+1}= S_t,\rho_{t+1}= \sigma\rho_t$. Left multiplication by $\sigma$ gives a bijection $\Omega_t(S_t)\longrightarrow \Omega_{t+1}(S_{t+1})$. Hence $\rho_{t+1}$ is uniform on $\Omega_{t+1}(S_{t+1})$. On the other hand, if the sampled $k$-cycle satisfies $\operatorname{supp}(\sigma)\not\subset M_t$, then $M_{t+1} = \sigma^{-1}(M_t), S_{t+1}= \sigma^{-1}(S_t)$. If $k$ is odd, then $\rho_{t+1}= \sigma^{-1}\rho_t\sigma$. For every sampled move $\sigma$, conjugation by $\sigma$ gives a bijection $\Omega_{t}(S_t)\rightarrow \Omega_{t+1}(S_{t+1})$. Therefore, $\rho_{t+1}$ is uniform on $\Omega_{t+1}(S_{t+1})$. If $k$ is even, then the random permutation
$
\sigma^{-1}\rho_t\sigma
$
is uniform on
$
\sigma^{-1}\Omega_t(S_t)\sigma=\Omega_t(S_{t+1}).
$ Conditionally on $M_{t+1}$, the unordered pair
$
\{U_{t+1},V_{t+1}\}
$
is chosen uniformly from the set of unordered pairs contained in $M_{t+1}$, independently of
$\rho_t$, and
\[
\rho_{t+1}=(U_{t+1}V_{t+1})\sigma^{-1}\rho_t\sigma.
\]
For every fixed unordered pair $\{u,v\}\subset M_{t+1}$, left multiplication by the transposition
$(uv)$ gives a bijection
$
\Omega_t(S_{t+1})\longrightarrow \Omega_{t+1}(S_{t+1}),
$
because $(uv)$ is supported on $M_{t+1}$ and changes parity. Averaging over the uniformly
chosen pair therefore preserves the uniform law on $\Omega_{t+1}(S_{t+1})$.

It remains to consider the case in which exactly one new element is marked. Then there exists
$i\in S_t$ such that
\[
M_{t+1}=M_t\cup\{i\},\qquad S_{t+1}=S_t\setminus\{i\}.
\]
Fix $\pi\in \Omega_{t+1}(S_{t+1})$. Since $\pi$ fixes every element of
$S_t\setminus\{i\}$, bijectivity implies $\pi(i)\in M_t\cup\{i\}$. 
We show
that the total transition weight of all moves taking a state in
$\Omega_t(S_t)$ to $\pi$ is independent of $\pi$.

Let $\alpha$ denote the sampled move acting on $\rho_t$, and write
$m=|M_t|$. If $k$ is odd, the moves that mark $i$ are the identity dummy move,
with weight $C_tq_t$, and the one-unmarked cycles
$(i\,a_2\,\cdots\,a_k)$ with $a_2,\ldots,a_k\in M_t$ distinct, each with
weight $q_t$. If $\pi(i)=i$, only the identity dummy move contributes, so the
total contribution is $C_tq_t=(m-1)_{k-2}q_t$. If $\pi(i)\in M_t$, then a
contributing genuine cycle must satisfy $\alpha(i)=\pi(i)$; hence
\[
    \alpha=(i\,\pi(i)\,a_3\,\cdots\,a_k),
\]
where $a_3,\ldots,a_k$ are distinct elements of
$M_t\setminus\{\pi(i)\}$. There are $(m-1)_{k-2}=C_t$ such cycles, so the total
contribution is again $C_tq_t$.

If $k$ is even, the moves that mark $i$ are the dummy transpositions
$(ab)(i\cdots i)$, where $\{a,b\}\subset M_t$, each with weight
$C_tq_t/\binom{m}{2}$, and the same one-unmarked genuine cycles as above, each
with weight $q_t$. If $\pi(i)=i$, only dummy transpositions can contribute, and
their total contribution is
\[
    \binom{m}{2}\cdot \frac{C_tq_t}{\binom{m}{2}}=C_tq_t.
\]
If $\pi(i)\in M_t$, no dummy transposition can contribute because every dummy
transposition fixes $i$. The same counting of genuine cycles as in the odd case
therefore gives total contribution $C_tq_t$.

Thus, in both parity cases, the total preimage weight is independent of the
target permutation $\pi\in\Omega_{t+1}(S_{t+1})$. Consequently, conditionally on
marking the element $i$, the random permutation $\rho_{t+1}$ is uniform on
$\Omega_{t+1}(S_{t+1})$.
\end{proof} 


The next lemma compares marking scheme A with marking scheme B by showing that the corresponding terminal times and stopped permutations agree with high probability under a natural coupling.

\begin{lemma}\label{lem: Z and Y}
    With notations defined in \Cref{subsec: marking schemes}, we have $\mathbf P(\tau_1\neq \tau_2)\le n^{-1+o(1)}$
    and
    $$d_{\mathrm{TV}}(\mathcal{L}(Z_{\tau_1}),\mathcal{L}(Y_{\tau_2})) \leq n^{-1+o(1)}.$$
\end{lemma}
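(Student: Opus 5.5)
We compare the two schemes through a step-by-step maximal coupling of the kernels $Q_{n,t}$ and $P_n$. The state is the pair (current permutation, marked set), and both processes start from the same $Y_{t^*}=Z_{t^*}$ and the same $S_{t^*}$. At each step $t+1$ where the two marked sets still agree, we sample $(\sigma^A,\sigma^B)$ from an optimal coupling of $Q_{n,t}$ and $P_n$. These two kernels give identical mass to every $k$-cycle that does not contain exactly one element of $S_t$. They differ only on the one-unmarked cycles, which have mass $q_t$ under $Q_{n,t}$ versus $\bigl(\frac{n!}{k(n-k)!}\bigr)^{-1}$ under $P_n$, and on the dummy moves, which $P_n$ does not have. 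Hence
$$
d_{\mathrm{TV}}(Q_{n,t},P_n)=|S_t|\,C_tq_t,
$$
which is exactly the total dummy mass. Explicitly,
$$
|S_t|\,C_tq_t=\frac{|S_t|}{|M_t|+1}\cdot|M_t|C_t\Bigl(\tfrac{n!}{k(n-k)!}\Bigr)^{-1}\approx\frac{|S_t|}{n}\cdot\frac{k}{n}.
$$
Here $|M_t|C_t\bigl(\frac{n!}{k(n-k)!}\bigr)^{-1}$ is the probability under $P_n$ that a given unmarked card $i$ is the unique unmarked card of the cycle, which is about $k/n$. So at each step the coupling fails with probability $O(k|S_t|/n^2)$. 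On the success event both processes apply the same genuine $k$-cycle, use the same marking rule, and therefore keep identical states and marked sets.

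The next step is to sum these failure probabilities up to the marking time. Restrict to the event $|S_{t^*}|\le \log n$; its complement is handled separately, as stated in the definition of $Q_{n,t}$, and by the choice of $t^*$ the Poisson parameter is at most $(\log\log n)^{1/2}$. We then need $\E\sum_{t\ge t^*}^{\tau_1} k|S_t|/n^2$. While $|S_t|=s\ge1$, each step marks a new card with probability about $sk/n$. So the expected time spent with exactly $s$ unmarked cards is about $n/(sk)$, and its contribution to the sum is $O(1/n)$. Summing over $s\le \log n$ gives $O(\log n/n)=n^{-1+o(1)}$.

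One must also control the steps where no card is marked: a cycle with two or more unmarked cards is taken, and $S_t$ is pulled back. Such a step keeps $|S_t|$ unchanged and occurs with probability $O(s^2k^2/n^2)$, much smaller than the marking probability $sk/n$. So the waiting-time bound above is unaffected. I would handle this by a geometric-waiting-time computation, or by bounding $\E[\tau_1-t^*]$ by $O(\frac nk\log\log n)$ and multiplying by $k\log n/n^2$, which still gives $n^{-1+o(1)}$.

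On the event that no failure occurs before either process finishes marking, the two processes agree at every step, so $\tau_1=\tau_2$ and $Z_{\tau_1}=Y_{\tau_2}$. This gives both claims: the first directly, and the second because $d_{\mathrm{TV}}\bigl(\mathcal L(Z_{\tau_1}),\mathcal L(Y_{\tau_2})\bigr)$ is at most the probability of that coupling failing. The main obstacle is making the accumulated failure bound rigorous, uniformly in $k$ up to $o(n/(\log n)^4)$. This requires the first-order estimates of $C_tq_t$ relative to the marking rate, and control of the tail of $\tau_1-t^*$, where the bound $|S_t|\le |S_{t^*}|\le\log n$ (the number of unmarked cards never increases) is essential.
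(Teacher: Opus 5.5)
Your proposal is correct and is essentially the paper's argument. Both use a step-by-step maximal coupling of $Q_{n,t}$ and $P_n$ whose per-step failure probability is the dummy mass $|S_t|C_tq_t\approx k|S_t|/n^2$, accumulated until marking ends after discarding $\{|S_{t^*}|>\log n\}$. The only difference is bookkeeping: the paper truncates at the deterministic horizon $t^*+n\log n/k$ and adds a coupon-collector tail bound for $\max\{\tau_1,\tau_2\}$, whereas your occupation-time count needs no tail estimate and is in fact exact. Indeed, $|S_t|$ is a nonincreasing Markov chain that leaves level $s$ with probability $p_s=sk(n-s)_{k-1}/(n)_k$; steps with two or more unmarked cards simply leave $|S_t|$ unchanged. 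Since $sC_tq_t=p_s/(n-s+1)$, given $|S_{t^*}|=s_0\le\log n$ the accumulated failure probability is at most $\sum_{s=1}^{s_0}\frac{1}{n-s+1}\le\frac{\log n}{n-\log n}$.
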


\begin{proof}
    Let $\mathcal E:=\{|S_{t^*}|>\log n\}$.
    For $m\geq 0$, let $B_m$ be the event that, in the natural step-by-step coupling of the $Z$-chain and the $Y$-chain, the two chains use different updates at least once during the time interval from $t^*$ to $t^*+m$.

    We first record the consequence of the coupling on the good event
    $$
    \mathcal E^c\cap B^c_{\frac{n\log n}{k}}\cap
    \left\{
    \max\{\tau_1,\tau_2\}-t^*\leq \frac{n\log n}{k}
    \right\}.
    $$
    On this event, the number of initially unmarked cards is at most $\log n$,
    and the two chains use the same update at every step until both stopping
    times have occurred. Since marking schemes A and B mark exactly the same
    card whenever the same admissible update is used, the marked sets in the
    two chains evolve identically throughout this time interval. Therefore the
    two terminal marking times are equal, namely
    $$
    \tau_1=\tau_2,
    $$
    and the corresponding permutations also agree:
    $$
    Z_{\tau_1}=Y_{\tau_2}.
    $$
    Hence
    $$
    \{\tau_1\neq \tau_2\}\cup\{Z_{\tau_1}\neq Y_{\tau_2}\}
    \subset
    \mathcal E\cup
    \left(B_{\frac{n\log n}{k}}\cap \mathcal E^c\right)
    \cup
    \left\{
    \max\{\tau_1,\tau_2\}-t^*>\frac{n\log n}{k}
    \right\}.
    $$
    It follows that
    $$
    \mathbf P(\tau_1\neq \tau_2)
    \leq
    \mathbf P(\mathcal E)
    +
    \mathbf P\left(B_{\frac{n\log n}{k}}\cap \mathcal E^c\right)
    +
    \mathbf P\left(
    \max\{\tau_1,\tau_2\}-t^*>\frac{n\log n}{k}
    \right).
    $$
    The same bound also controls the coupling error between the stopped
    variables. By the coupling inequality,
    $$
    d_{\mathrm{TV}}\bigl(\mathcal L(Z_{\tau_1}),\mathcal L(Y_{\tau_2})\bigr)
    \leq
    \mathbf P(Z_{\tau_1}\neq Y_{\tau_2})
    $$
    and hence
    $$
    d_{\mathrm{TV}}\bigl(\mathcal L(Z_{\tau_1}),\mathcal L(Y_{\tau_2})\bigr)
    \leq
    \mathbf P(\mathcal E)
    +
    \mathbf P\left(B_{\frac{n\log n}{k}}\cap \mathcal E^c\right)
    +
    \mathbf P\left(
    \max\{\tau_1,\tau_2\}-t^*>\frac{n\log n}{k}
    \right).
    $$

    We now bound the three terms on the right-hand side. First, by the estimate
    on the number of cards not yet touched at time $t^*$, we have
    \begin{equation}\label{eq: St small}
        \mathbf P(\mathcal E)=\mathbf P(|S_{t^*}|>\log n)=n^{-\omega(1)}.
    \end{equation}
    Second, on the event $\mathcal E^c$, the number of unmarked cards is at most
    $\log n$ throughout the coupling. For simpler notation, we let \(s = |S_t|\) and \(|M_t| = n-s\). 
Then we compare the one-step transition rules of the two coupled chains and get
\begin{equation}
\begin{aligned}
d_{\mathrm{TV}}\!\left(P_{n,t},Q_{n,t}\right)= s C_t q_t 
= \frac{s}{n(n-1)}
   \frac{(n-s)k}{n-s+1}
   \prod_{i=2}^{k-1}
   \left(1-\frac{s-1}{n-i}\right) = \frac{sk}{n^2}\left(1+o(1)\right),
\end{aligned}
\tag{3.16}
\end{equation}
for \(k=o\!\left(n/(\log n)^4\right)\) and \(s=O(\log n)\).
    
We obtain by a union bound over the next
    $n\log n/k$ steps that
    $$
    \mathbf P\left(B_{\frac{n\log n}{k}}\cap \mathcal E^c\right)
    \leq
    \frac{n\log n}{k}\cdot
    \frac{k\log n}{n^2}(1+o(1))
    =
    n^{-1+o(1)}.
    $$
    Finally, we use the coupon-collector-type estimates for the two marking schemes. Fix a point in $S_{t^*}$. At each
subsequent step, this point is included in the chosen uniform $k$-cycle with probability $k/n$.
Hence, after the next $\left\lceil n\log n/k\right\rceil$ steps, the probability that this point
is still untouched is at most
\begin{equation}
    \left(1-\frac{k}{n}\right)^{\left\lceil n\log n/k\right\rceil}
    \leq \exp(-\log n)
    = n^{-1}.
\end{equation}
Taking a union bound over all points in $S_{t^*}$ gives
\begin{equation}\label{eq: coupon}
    \mathbf P\left(\tau-t^*>\frac{n\log n}{k},\ |S_{t^*}|\leq \log n\right)\leq\log n\cdot\left(1-\frac{k}{n}\right)^{\left\lceil n\log n/k\right\rceil}\leq\frac{\log n}{n}
    = n^{-1+o(1)}.
\end{equation}
As a result, we get
    $$
    \mathbf P\left(
    \tau_1-t^*>\frac{n\log n}{k},\,\mathcal E^c
    \right)
    \leq n^{-1+o(1)}\quad \text{and} \quad
    \mathbf P\left(
    \tau_2-t^*>\frac{n\log n}{k},\,\mathcal E^c
    \right)
    \leq n^{-1+o(1)}.
    $$
    Therefore
    $$
    \mathbf P\left(
    \max\{\tau_1,\tau_2\}-t^*>\frac{n\log n}{k}
    \right)
    \leq
    n^{-1+o(1)}+\mathbf P(\mathcal E)
    =
    n^{-1+o(1)}.
    $$

    Combining the preceding estimates yields
    $$
    \mathbf P(\tau_1\neq \tau_2)\leq n^{-1+o(1)}
    $$
    and
    $$
    d_{\mathrm{TV}}\bigl(\mathcal L(Z_{\tau_1}),\mathcal L(Y_{\tau_2})\bigr)
    \leq n^{-1+o(1)}.
    $$
    This completes the proof.
\end{proof}


We next compare the terminal time of marking scheme B with the genuine hitting time $\tau$ of the original random $k$-cycle walk.

\begin{lemma}\label{lemma: Z,X gap}
    With notations defined in \Cref{subsec: marking schemes}, we have 
    $$d_{\mathrm{TV}}(\mathcal{L}(Z_{\tau_1}),\mathcal{L}(X_{\tau}))\leq \exp\left(-(\log\log n)^{1/2+o(1)}\right).$$
\end{lemma}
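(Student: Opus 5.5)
The plan is to couple $X$ and $Z$ so that from time $t^*$ onward they multiply by the \emph{same} $k$-cycles $\sigma_{t^*+1},\sigma_{t^*+2},\ldots$. Then, for every $t\ge t^*$,
$$
X_t=\sigma_t\cdots\sigma_{t^*+1}X_{t^*},\qquad Z_t=\sigma_t\cdots\sigma_{t^*+1}Z_{t^*}.
$$
Hence $X_\tau=Z_{\tau_1}$ as soon as two things hold: $X_{t^*}=Z_{t^*}$, and $\tau=\tau_1$. By the coupling inequality, the total variation distance is then at most the probability of four bad events, which I will bound one at a time.
\begin{enumerate}
\item $\tau\le t^*$, i.e.\ $S_{t^*}=\emptyset$.
\item $|S_{t^*}|>\log n$.
\item $X_{t^*}\neq Z_{t^*}$.
\item $\tau\neq\tau_1$ although $X_{t^*}=Z_{t^*}$.
\end{enumerate}

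\emph{Events (1) and (2).} At time $t^*$ we have
$$
\frac{kt'}{n}=-\tfrac12\log\log\log n+4\log\log\log\log n,
$$
so $\gamma_{n,t^*}=(\log\log n)^{1/2}(\log\log\log n)^{-4}=(\log\log n)^{1/2+o(1)}$. Taking $M=0$ in the second item of \Cref{lemma::Self-reduction lemma}, which is legitimate since $|t'|$ lies in the required window, gives
$$
\mathbf P(S_{t^*}=\emptyset)=(1+o(1))e^{-\gamma_{n,t^*}}=\exp\bigl(-(\log\log n)^{1/2+o(1)}\bigr).
$$
This term is the source of the final error rate. Event (2) has probability $n^{-\omega(1)}$ by \eqref{eq: St small}.

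\emph{Event (3).} Condition on the untouched set $S_{t^*}=S$ with $1\le|S|\le\log n$. The time $t^*$ satisfies the hypothesis of \Cref{prop:key}, since $|t'|\le\frac nk(\frac12\log\log\log n-2\log\log\log\log n)$. So $X_{t^*}$ conditioned on $\mathcal F(S)$ is within $(\log n)^{-1/2+o(1)}$ of $\mu_{t^*}^{S}$, the uniform law on $\Omega_{t^*}(S)$. By construction $Z_{t^*}$ has exactly this law given $S_{t^*}$. We therefore use a maximal coupling of $X_{t^*}$ and $Z_{t^*}$ given $S_{t^*}$, and take the later cycles independent of both. Averaging over $S$, event (3) costs $(\log n)^{-1/2+o(1)}$, which is negligible compared with the main term.

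\emph{Event (4), the main step.} The task is to show that marking scheme B agrees with the touching process except on a small event. Under the coupling, the set of untouched cards of $X$ and the unmarked set of scheme B start equal at time $t^*$. I will show by induction that they remain equal as long as no chosen cycle contains two or more cards of the current $S_t$:
\begin{itemize}
\item If a cycle avoids $S_t$, it fixes $S_t$ pointwise, so $\sigma^{-1}(S_t)=S_t$, and the untouched set is also unchanged.
\item If a cycle contains exactly one card of $S_t$, both processes remove that card.
\end{itemize}
So $\tau\ne\tau_1$ forces some step $t\in(t^*,\tau]$ at which the cycle contains two cards that are still untouched. For a fixed pair of cards in $S_{t^*}$, the probability that their first touch is simultaneous is the ratio of the per-step probability of hitting both, which is $O(k^2/n^2)$, to the per-step probability of hitting at least one, which is of order $2k/n$; this gives $O(k/n)$. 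A union bound over at most $\binom{\log n}{2}$ pairs makes event (4) cost $O(k(\log n)^2/n)=o(1)$, and in fact polynomially small in $\log n$ because $k=o(n/(\log n)^4)$. One should check that on event (4) the pullback rule does not silently move an unmarked label onto a touched card in a way that breaks the identification; the induction above is designed to show it cannot before the first collision. If a cleaner bound is wanted, one can instead restrict to $\tau-t^*\le n\log n/k$ using \eqref{eq: coupon}, then union bound over steps with probability $\binom{s}{2}k^2/n^2$ per step.

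Summing the four contributions gives
$$
\exp\bigl(-(\log\log n)^{1/2+o(1)}\bigr)+n^{-\omega(1)}+(\log n)^{-1/2+o(1)}+o\bigl((\log n)^{-2}\bigr)=\exp\bigl(-(\log\log n)^{1/2+o(1)}\bigr).
$$
I expect the main obstacle to be the bookkeeping in event (4): verifying that scheme B's pullback of the marked set exactly tracks the untouched set until the first step whose cycle contains two unmarked cards.
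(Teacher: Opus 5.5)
Your proof is correct and follows the paper's argument: the same coupling (shared $k$-cycles after $t^*$, with $X_{t^*}$ and $Z_{t^*}$ coupled via \Cref{prop:key} conditionally on $S_{t^*}$), the same identification of the untouched set with scheme B's unmarked set up to the first collision, and the same dominant error $\mathbf P(\tau\le t^*)\approx e^{-\gamma_{n,t^*}}$. The one difference is in how you bound the collision event: for a fixed pair, the first step touching it hits both cards with probability $\frac{k-1}{2n-k-1}$. This per-pair computation avoids the paper's truncation at $t^*+n\log n/k$ and its coupon-collector tail, and gives a slightly better bound, $O(k(\log n)^2/n)$ instead of $O(k(\log n)^3/n)$.
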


\begin{proof}
Let $\mathcal B$ be the event that, before time
$t^*+ n\log n/k$, some chosen $k$-cycle contains at least two elements of $S_{t^*}$ conditioned on $|S_{t^*}|\leq \log n$. Then, by a union bound,
$$
\mathbf P(\tau_1\neq \tau)\leq\mathbf P(\tau\leq t^*)+\mathbf P(|S_{t^*}|\geq \log n) +\mathbf P\left(\tau-t^*>\frac{n\log n}{k},\ |S_{t^*}|\leq \log n\right)+\mathbf P(\mathcal B).
$$
By \Cref{lemma::Self-reduction lemma}, we have
$$
    \mathbf P(\tau\leq t^*)
    \leq \mathbf P(\mathcal F^{t^*}(\varnothing)) =\left(1+(\log n)^{-1/2+o(1)}\right)
    \exp(-\gamma_{t^*}).
$$
Moreover,
\begin{align*}
    \gamma_{t^*}
    &=\exp\left(
    -\frac{\left(t^*-\frac{n\log n}{k}\right)k}{n}
    \right) \\
    &=\exp\left(
    \frac{1}{2}\log\log\log n
    -4\log\log\log\log n
    +o(1)
    \right) \\
    &=(\log\log n)^{1/2+o(1)}.
\end{align*}
Therefore,
\begin{equation}\label{eq: finish early}
    \mathbf P(\tau\leq t^*)
    \leq \exp\left(-(\log\log n)^{1/2+o(1)}\right).
\end{equation}
Also, by the definition of $t^*$ and the coupon-collector estimate for the number of untouched
points,
\begin{equation}
    \mathbf P(|S_{t^*}|\geq \log n)\leq n^{-\omega(1)}.
\end{equation}
We now condition on the event $|S_{t^*}|\leq \log n$.
By \eqref{eq: coupon}, we see that
$$\mathbf P\left(\tau-t^*>\frac{n\log n}{k},\ |S_{t^*}|\leq \log n\right)\leq\frac{\log n}{n}
    = n^{-1+o(1)}.$$

It remains to control the event that the touching scheme and the marking scheme differ before
time $t^*+ n\log n/k$. Conditional on $|S_{t^*}|\leq \log n$, at a single
step this can happen only if the chosen uniform $k$-cycle contains at least two elements of
$S_{t^*}$. The probability of this event is at most
\begin{equation}
    \binom{k}{2}\left(\frac{|S_{t^*}|}{n}\right)^2
    \leq \binom{k}{2}\frac{(\log n)^2}{n^2}.
\end{equation}
Summing over the next $\left\lceil n\log n/k\right\rceil$ steps, we obtain
\begin{equation}\label{eq: prob of B}
\mathbf P(\mathcal B)\leq\left\lceil\frac{n\log n}{k}\right\rceil\binom{k}{2}\frac{(\log n)^2}{n^2} \notag\leq \frac{k(\log n)^3}{n}\leq \frac{1}{\log n}.
\end{equation}
Combining the preceding estimates yields
\begin{equation}
    \mathbf P(\tau_1\neq \tau)
    \leq \exp\left(-(\log\log n)^{1/2+o(1)}\right).
\end{equation}

We first compare the initial laws of \(Z_{t^*}\) and \(X_{t^*}\). Recall that \(S_{t^*}\) denotes the set of cards which have not been touched by time \(t^*\). Conditional on the event \(S_{t^*}=M\), the random variable \(Z_{t^*}\) is sampled uniformly from \(\Omega_{t^*}(M)\), and hence has law \(\mu_{t^*}^M\). On the other hand, conditional on \(S_{t^*}=M\), the law of \(X_{t^*}\) is exactly the law of \(X_{t^*}\) conditioned on \(F(M)\). Therefore, by conditioning on \(S_{t^*}\), we obtain
$$
\begin{aligned}
    d_{\mathrm{TV}}\bigl(\mathcal L(Z_{t^*}),\mathcal L(X_{t^*})\bigr)
    &\leq
    \sum_{M\subseteq[n]}
    \mathbf P(S_{t^*}=M)\,
    d_{\mathrm{TV}}\bigl(\mu_{t^*}^M,\mathcal L(X_{t^*}\mid F(M))\bigr) \\
    &\leq
    (\log n)^{-1/2+o(1)}
    +
    \mathbf P(|S_{t^*}|>\log n).
\end{aligned}
$$
Here, in the second line we used \Cref{prop:key} for all \(M\) with \(|M|\leq \log n\), and the trivial bound \(d_{\mathrm{TV}}\leq 1\) for the remaining values of \(M\). By the coupon-collector estimate at time \(t^*\), we have
$$
\mathbf P(|S_{t^*}|>\log n)=n^{-\omega(1)}.
$$
Thus
$$
d_{\mathrm{TV}}\bigl(\mathcal L(Z_{t^*}),\mathcal L(X_{t^*})\bigr)
\leq
(\log n)^{-1/2+o(1)}+n^{-\omega(1)}.
$$

We now compare the laws at the stopping times. Consider a coupling of $(Z_t,X_t)$ such that $d_{TV}(\mathcal{L}(Z_{t^*}),\mathcal{L}(X_{t^*})) = \mathbf{P}(Z_{t^*} \neq X_{t^*})$ and $Z_t$, $X_t$ evolve according to the same transition kernel after time \(t^*\). The total variation distance between their laws cannot increase under this common Markov evolution. Then

$$
\begin{aligned}
    &d_{\mathrm{TV}}\bigl(\mathcal L(Z_{\tau_1}),\mathcal L(X_\tau)\bigr)
    \leq
    d_{\mathrm{TV}}\bigl(\mathcal L(Z_{t^*}),\mathcal L(X_{t^*})\bigr)
    +\mathbf P(\tau_1\neq \tau)+  \mathbf P(\max(\tau_1,\tau) - t^*> n\log n/k) \\
    &\leq
    (\log n)^{-1/2+o(1)}
    +n^{-\omega(1)}
    +\exp\left(-(\log\log n)^{1/2+o(1)}\right)  + n^{-1+o(1)}\\
    &\leq
    \exp\left(-(\log\log n)^{1/2+o(1)}\right).
\end{aligned}
$$
\end{proof}

The next lemma shows that the auxiliary uniform process $\rho_t$ and the $Y$-chain have the same terminal marking time, and remain close at that time.

\begin{lemma}\label{lem: Y and rho}
    Under the natural coupling in which $(Y_t)_{t\ge t^*}$ and $(\rho_t)_{t\ge t^*}$ both use the same sampled move generated from $Q_{n,t}$ at each time and $Y_{t^*} = \rho_{t^*}$, one has $\tau_2=\tau_3$. Moreover,
    \[
    d_{\mathrm{TV}}(\mathcal L(Y_{\tau_2}), \mathcal L(\rho_{\tau_3})) \leq \frac{1+o(1)}{\log n}.
    \]
\end{lemma}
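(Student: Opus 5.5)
The first assertion, $\tau_2=\tau_3$, holds by construction. Under the natural coupling the processes $(Y_t)$ and $(\rho_t)$ use the same sampled move $\sigma\sim Q_{n,t}$ at every step. The definition of $\rho_t$ updates the marked sets by exactly the rule of marking scheme A: mark $i$ after a dummy move or a one-unmarked cycle, and otherwise replace $M_t$ by $\sigma^{-1}(M_t)$. The two marked-set processes therefore coincide pathwise, and their terminal marking times agree.

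For the total variation bound I would apply the coupling inequality,
\[
d_{\mathrm{TV}}\bigl(\mathcal L(Y_{\tau_2}),\mathcal L(\rho_{\tau_3})\bigr)\le \mathbf P(Y_{\tau_2}\neq\rho_{\tau_3}),
\]
and identify the steps at which the two permutations evolve differently. Two kinds of step cause no trouble. If the move marks a new element through a genuine one-unmarked cycle, or if $\operatorname{supp}(\sigma)\subset M_t$, then both chains multiply by $\sigma$ on the left. If the move is an odd-case dummy move, it acts as the identity on both. In these cases $Y_t=\rho_t$ is preserved.

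Discrepancy can arise from only two sources.
\begin{enumerate}
\item A move $\sigma$ with $\operatorname{supp}(\sigma)\not\subset M_t$ that marks nothing new. Here $\rho$ is updated by conjugation, possibly followed by a transposition, while $Y$ is updated by left multiplication.
\item In the even case, a dummy move. This acts as $(ab)$ on both chains, so it also causes no discrepancy.
\end{enumerate}
So the only bad event is that, between $t^*$ and $\tau_2$, the sampled move touches $S_t$ in at least two points. This is because a move meeting $S_t$ in exactly one point is a marking move by definition.

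I would bound this bad event exactly as the event $\mathcal B$ in \Cref{lemma: Z,X gap}. First restrict to $\{|S_{t^*}|\le\log n\}$, which fails with probability $n^{-\omega(1)}$ as in \eqref{eq: St small}. Also restrict to $\{\tau_2-t^*\le n\log n/k\}$, which fails with probability $n^{-1+o(1)}$ by the coupon-collector estimate from \Cref{lem: Z and Y}. Under $Q_{n,t}$, moves that are not one-unmarked moves keep their original mass $\bigl(n!/(k(n-k)!)\bigr)^{-1}$. Hence the one-step probability of choosing a cycle with at least two points of $S_t$, where $|S_t|\le\log n$, is at most $\binom k2(\log n)^2/n^2$. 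Summing over $n\log n/k$ steps gives at most $k(\log n)^3/(2n)$, which is at most $(1+o(1))/\log n$ because $k=o(n/(\log n)^4)$. Adding the two exceptional probabilities, each $o(1/\log n)$, yields the claimed bound $(1+o(1))/\log n$.

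The main obstacle is the bookkeeping in the first step. I need to confirm carefully that every move other than a cycle meeting $S_t$ in at least two points acts identically on $Y$ and $\rho$. In particular, a cycle containing one unmarked point is always a marking move, regardless of where that point sits in the cycle, since the ordering $i_1\in S_t$ is merely a labeling of the cycle.
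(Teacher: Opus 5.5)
Your proposal is correct and follows essentially the same route as the paper. Both arguments obtain $\tau_2=\tau_3$ from the identical marked-set updates. Both then use the coupling inequality to reduce the bound to the event that some $Q_{n,t}$-move before time $t^*+n\log n/k$ meets the unmarked set in at least two points, controlled by the same union bound as in \Cref{lemma: Z,X gap} plus the exceptional events $\{|S_{t^*}|>\log n\}$ and $\{\tau_2-t^*>n\log n/k\}$. Your explicit check that only such moves make $Y$ and $\rho$ act differently, and your remark that the union bound in fact gives $o(1/\log n)$, simply make precise what the paper leaves implicit.
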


\begin{proof}
    Because we have coupled the two chains using the same sampled move $\sigma_t \sim Q_{n,t}$ at each time $t \geq t^*$ and set $Y_{t^*} = \rho_{t^*}$. The marked sets for these two chains are identical at any time and therefore $\tau_2 = \tau_3$.   
    Let $T = \frac{n \log n}{k} + t^*$. We know that
    \[
    \mathbf{P}\left(\tau_2  \geq T \right) \leq n^{-1+o(1)},\quad \mathbf{P}(|S_{t^*}| \geq \log n) \leq n ^{- \omega(1)}.
    \]
    Let $\mathcal{B}_T$ be the event that there exists some $1 \leq t \leq T - t^*$ such that $\sigma_{t+t^*}$ contains at least two elements from $[n] \setminus M_{t+t^*}$. Recall that $|[n] \setminus M_{t+t^*}| \leq |[n] \setminus M_{t^*}| = |S_{t^*}|$. By the same argument as in Lemma \ref{lemma: Z,X gap}, we get
    \[
    \mathbf{P} \left( \mathcal{B}_T \bigg| |S_{t^*}| \leq \log n \right) \leq \frac{1}{\log n}.
    \]
Recall that the processes \(Y_t\) and \(\rho_t\) are constructed on the same probability space, using the same \(Q_{n,t}\)-moves. Let \(\mathcal{B}_t\) be the bad event that the coupling between \(Y\) and \(\rho\) has failed by time \(t\). Then, on the event \(\mathcal{B}_{\tau_2}^c\), the two processes agree up to the marking time, and in particular \(Y_{\tau_2}=\rho_{\tau_3}\). Hence
$$
d_{\mathrm{TV}}\bigl(\mathcal L(Y_{\tau_2}),\mathcal L(\rho_{\tau_3})\bigr)
\leq
\mathbf P(Y_{\tau_2}\neq \rho_{\tau_3})
\leq
\mathbf P(\mathcal{B}_{\tau_2}).
$$
It remains to bound the probability of this bad event. We split according to whether the marking time \(\tau_2\) occurs before the deterministic time \(T\), and according to whether the number of initially unmarked cards is at most \(\log n\). Since \(\mathcal{B}_{\tau_2}\subseteq \{\tau_2\geq T\}\cup \mathcal{B}_T\), we have
$$
\begin{aligned}
    \mathbf P(\mathcal{B}_{\tau_2})
    &\leq
    \mathbf P(\tau_2\geq T)+\mathbf P(\mathcal{B}_T) \\
    &\leq
    \mathbf P(\tau_2\geq T)
    +\mathbf P\bigl(\mathcal{B}_T\,\big|\, |S_{t^*}|\leq \log n\bigr)
    +\mathbf P(|S_{t^*}|>\log n).
\end{aligned}
$$
By the preceding estimates, the three terms on the right-hand side satisfy
$$
\mathbf P(\tau_2\geq T)\leq n^{-1+o(1)}, \quad\mathbf P\bigl(\mathcal{B}_T\,\big|\, |S_{t^*}|\leq \log n\bigr)\leq \frac{1}{\log n},
$$
and
$$
\mathbf P(|S_{t^*}|\geq \log n)\leq n^{-\omega(1)}.
$$
Combining these estimates gives
$$
d_{\mathrm{TV}}\bigl(\mathcal L(Y_{\tau_2}),\mathcal L(\rho_{\tau_3})\bigr)\leq\frac{1}{\log n}+n^{-1+o(1)}+n^{-\omega(1)}\leq\frac{1+o(1)}{\log n}.
$$
This completes the proof.
\end{proof}

It remains to show that, when $k$ is even, the parity of the genuine hitting time $\tau$ is asymptotically balanced.

\begin{lemma}\label{lem:parity of the hitting time}
Suppose that \(k\) is even and \(2\leq k=o(n/(\log n)^4)\). Then 
$$ 
\mathbf P(\tau \text{ is even})=\frac12+O\left(\epsilon_n\right),
\qquad
\mathbf P(\tau \text{ is odd})=\frac12+O\left(\epsilon_n\right),
$$
where $\epsilon_n = \exp\left(-(\log \log n)^{1/2+o(1)}\right).$
In particular,
$$
d_{\mathrm{TV}}\left(
\mathbf P(\tau \text{ is even})U_{\mathfrak A_n}+\mathbf P(\tau \text{ is odd})U_{\mathfrak A_n^c},
U_{\mathfrak S_n}\right)=\exp\left(-(\log \log n)^{1/2+o(1)}\right).
$$
\end{lemma}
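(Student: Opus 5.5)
The plan is to isolate the last untouched card: once only one card remains untouched, the remaining waiting time is an exact geometric random variable whose parity is nearly balanced.

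Let $N_t$ denote the number of cards not touched by time $t$. Define the stopping time
$$\sigma_1:=\min\{t\ge 0: N_t\le 1\}.$$
Let $\mathcal D$ be the event that $N_{\sigma_1}=0$, i.e.\ the count jumps directly from at least $2$ to $0$. Off $\mathcal D$, exactly one card $j$ is untouched at time $\sigma_1$. By the strong Markov property, $\tau-\sigma_1$ is then independent of $\mathcal F_{\sigma_1}$ and is exactly $\mathrm{Geom}(p)$ with $p=k/n$: each step is an independent uniform $k$-cycle, which contains $j$ with probability $k/n$.

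A direct computation then gives
$$\mathbf P(\mathrm{Geom}(p)\text{ even})=\sum_{m\ge1}(1-p)^{2m-1}p=\frac{1-p}{2-p}=\frac12-\frac{p}{2(2-p)}.$$
Hence, conditionally on $\mathcal F_{\sigma_1}$ and on $\mathcal D^c$, the parity of $\tau=\sigma_1+(\tau-\sigma_1)$ is even with probability $\tfrac12+O(k/n)$, whatever the parity of $\sigma_1$. Averaging over $\mathcal F_{\sigma_1}$ gives
$$\bigl|\mathbf P(\tau\text{ even})-\tfrac12\bigr|\le \mathbf P(\mathcal D)+O(k/n).$$

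The main step is bounding $\mathbf P(\mathcal D)$, and here I reuse the estimates from the proof of \Cref{lemma: Z,X gap}. On $\mathcal D$ one of the following must occur:
\begin{enumerate}
\item $\tau\le t^*$, which has probability $\exp(-(\log\log n)^{1/2+o(1)})$ by \eqref{eq: finish early};
\item $|S_{t^*}|>\log n$, which has probability $n^{-\omega(1)}$;
\item $\tau-t^*>n\log n/k$ with $|S_{t^*}|\le\log n$, which has probability $n^{-1+o(1)}$ by \eqref{eq: coupon};
\item some cycle chosen in $(t^*,t^*+n\log n/k]$ contains two cards of $S_{t^*}$, which has probability at most $k(\log n)^3/n=o(1/\log n)$, as in \eqref{eq: prob of B}.
\end{enumerate}
Indeed, if $\tau>t^*$ then the final jump to $0$ happens after $t^*$, and a jump from at least $2$ to $0$ requires a single cycle containing two cards of the untouched set, which is contained in $S_{t^*}$. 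Summing these four bounds gives $\mathbf P(\mathcal D)\le \epsilon_n$. Since $k/n=o((\log n)^{-4})$ is much smaller than $\epsilon_n$, we obtain $\mathbf P(\tau\text{ even})=\tfrac12+O(\epsilon_n)$, and the odd case follows by complement.

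For the total variation statement, write $a=\mathbf P(\tau\text{ even})$ and note that $U_{\mathfrak S_n}=\tfrac12U_{\mathfrak A_n}+\tfrac12U_{\mathfrak A_n^c}$. Since $U_{\mathfrak A_n}$ and $U_{\mathfrak A_n^c}$ have disjoint supports, the distance between the two mixtures is exactly $|a-\tfrac12|=O(\epsilon_n)$. Because $\epsilon_n=\exp(-(\log\log n)^{1/2+o(1)})$, the constant in $O(\epsilon_n)$ is absorbed into the $o(1)$ in the exponent, which gives the stated bound.
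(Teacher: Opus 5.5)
Your proof is correct and takes essentially the same route as the paper. Both isolate the last untouched card and use that the residual waiting time is exactly geometric with parameter $k/n$, so the parity bias is $O(k/n)$. Both then absorb the exceptional events ($\tau\le t^*$, $|S_{t^*}|>\log n$, late hitting, and a cycle touching two untouched cards) using the estimates from \Cref{lemma: Z,X gap}. Your stopping time $\sigma_1$, counted from time $0$, together with the explicit check that $\mathcal D$ forces one of the four listed events, is a slightly tidier version of the paper's $\inf\{t\ge t^*:|S_t|=1\}$, but the substance is the same.
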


\begin{proof}
We first separate the exceptional event $\{\tau \leq t^*\}$. By \eqref{eq: finish early}, we already have $\mathbf{P}(\tau \leq t^*) \leq \epsilon_n.$ Let \(S_t\) be the set of cards not touched by time \(t\). By \eqref{eq: St small}, we  have that $\mathbf P(|S_{t^*}|>\log n)=n^{-\omega(1)}$.
Using the same argument as in Lemma \ref{lemma: Z,X gap}, we get that $$\mathbf{P}\left(
\mathcal B \bigg| |S_{t^*}|\leq \log n
\right)
\leq
O\left(\frac{k(\log n)^3}{n}\right)
=
o\left(\frac1{\log n}\right),$$
where $\mathcal B$ is the event that  at some step after $t^*$, the chosen $k$-cycle touches at least two elements which are still untouched at that time. So after time \(t^*\), with high probability the number of untouched cards decreases by one whenever it decreases. 
Therefore, with probability \(1-O(\epsilon_n)\), the process reaches a time
$$
\sigma:=\inf\{t\geq t^*: |S_t|=1\}
$$
before the hitting time \(\tau\).

On the event that \(\sigma<\tau\), let \(i\) be the unique card not touched by time \(\sigma\). After time \(\sigma\), the hitting time \(\tau\) is obtained by waiting until a sampled \(k\)-cycle contains \(i\). At each subsequent step this happens with probability \(k/n\), independently of the past. Hence
$\tau-\sigma$ has the geometric distribution on \(\{1,2,\ldots\}\) with success probability \(k/n\). Therefore,
$$
\left|\mathbf P(\tau-\sigma\text{ is even})-\mathbf P(\tau-\sigma\text{ is odd})\right|
=
\frac{k/n}{2-k/n}
=
O\left(\frac{k}{n}\right).
$$
Consequently, conditional on the history up to time \(\sigma\), the parity of \(\tau\) has bias at most \(O(k/n)\). Combining this with the exceptional probabilities above gives
\begin{equation}
    \begin{aligned}
        \left|
\mathbf P(\tau \text{ is even})-\frac12
\right|
&\leq
O\left(\frac{k}{n}\right)
+
\mathbf P(\tau\leq t^*)
+
\mathbf P(|S_{t^*}|>\log n)
+
\mathbf P\left(
\mathcal B\cap\{|S_{t^*}|\leq\log n\}
\right)=
O\left(\epsilon_n\right).
    \end{aligned}
\end{equation}

The same bound for \(\mathbf P(\tau\text{ is odd})\) follows since the two probabilities sum to one.

Finally, since $U_{\mathfrak S_n}=\frac12 U_{\mathfrak A_n}
+\frac12 U_{\mathfrak A_n^c}$, we have
\begin{equation}
d_{\mathrm{TV}}\left(
\mathbf P(\tau \text{ is even})U_{\mathfrak A_n}+\mathbf P(\tau \text{ is odd})U_{\mathfrak A_n^c},U_{\mathfrak S_n}\right)
=\left|\mathbf P(\tau \text{ is even})-\frac12\right|=\exp\left(-(\log \log n)^{1/2+o(1)}\right). 
\end{equation}
This completes the proof.
\end{proof}

\begin{proof}[Proof of \Cref{thm: hitting time mixing_main thm}]
We combine the uniformity of the auxiliary process \(\rho_t\) with the comparison estimates between the three auxiliary chains and the original chain.

First consider the case where \(k\) is odd. By the uniformity statement proved above, conditional on the marked set \(M_t\), the random permutation \(\rho_t\) is uniform on \(\Omega_t(S_t)\). At the stopping time \(\tau_3\), all elements have been marked, so \(S_{\tau_3}=\emptyset\). Since \(k\) is odd, the walk stays in \(\mathfrak A_n\), and therefore
$$
\rho_{\tau_3}\sim U_{\mathfrak A_n}.
$$
Hence, by the triangle inequality,
$$
d_{\mathrm{TV}}\bigl(\mathcal L(X_\tau),U_{\mathfrak A_n}\bigr)\leq
d_{\mathrm{TV}}\bigl(\mathcal L(X_\tau),\mathcal L(Z_{\tau_1})\bigr)
+d_{\mathrm{TV}}\bigl(\mathcal L(Z_{\tau_1}),\mathcal L(Y_{\tau_2})\bigr) 
+d_{\mathrm{TV}}\bigl(\mathcal L(Y_{\tau_2}),\mathcal L(\rho_{\tau_3})\bigr).
$$
By the estimates proved in the preceding lemmas,
$$
d_{\mathrm{TV}}\bigl(\mathcal L(X_\tau),\mathcal L(Z_{\tau_1})\bigr)
\leq
\exp\left(-(\log\log n)^{1/2+o(1)}\right),\,\,
d_{\mathrm{TV}}\bigl(\mathcal L(Z_{\tau_1}),\mathcal L(Y_{\tau_2})\bigr)
\leq
n^{-1+o(1)},
$$
and
$$
d_{\mathrm{TV}}\bigl(\mathcal L(Y_{\tau_2}),\mathcal L(\rho_{\tau_3})\bigr)
\leq
\frac{1+o(1)}{\log n}.
$$
Since
$$
n^{-1+o(1)}+\frac{1+o(1)}{\log n}
\leq
\exp\left(-(\log\log n)^{1/2+o(1)}\right),
$$
we conclude that
$$
d_{\mathrm{TV}}\bigl(\mathcal L(X_\tau),U_{\mathfrak A_n}\bigr)
\leq
\exp\left(-(\log\log n)^{1/2+o(1)}\right).
$$

It remains to treat the case where \(k\) is even. In this case each \(k\)-cycle is an odd permutation, so the parity of the walk alternates with time. Again, the uniformity of \(\rho_t\) conditional on the marked set implies that, at the marking time \(\tau_3\),
$$
\rho_{\tau_3}
\sim
\mathbf P(\tau_3\text{ is even})U_{\mathfrak A_n}
+
\mathbf P(\tau_3\text{ is odd})U_{\mathfrak A_n^c}.
$$
Since \Cref{lem: Y and rho} gives \(\tau_2=\tau_3\) under the coupling, and the previous comparison between \(Z_{\tau_1}\) and \(X_\tau\) includes the event \(\tau_1\neq \tau\), the same triangle inequality gives
\begin{multline}
d_{\mathrm{TV}}\left(
\mathcal L(X_\tau),
\mathbf P(\tau\text{ is even})U_{\mathfrak A_n}+
\mathbf P(\tau\text{ is odd})U_{\mathfrak A_n^c}
\right) \\\leq d_{\mathrm{TV}}\bigl(\mathcal L(X_\tau),\mathcal L(Z_{\tau_1})\bigr)
+d_{\mathrm{TV}}\bigl(\mathcal L(Z_{\tau_1}),\mathcal L(Y_{\tau_2})\bigr)
+d_{\mathrm{TV}}\left(\mathcal L(Y_{\tau_2}),
\mathbf P(\tau\text{ is even})U_{\mathfrak A_n}+\mathbf P(\tau\text{ is odd})U_{\mathfrak A_n^c}
\right).
\end{multline}
The last term is bounded by
$$
d_{\mathrm{TV}}\bigl(\mathcal L(Y_{\tau_2}),\mathcal L(\rho_{\tau_3})\bigr)
+
d_{\mathrm{TV}}\left(
\mathcal L(\rho_{\tau_3}),
\mathbf P(\tau\text{ is even})U_{\mathfrak A_n}
+
\mathbf P(\tau\text{ is odd})U_{\mathfrak A_n^c}
\right).
$$
The first part is at most \((1+o(1))/\log n\) by \Cref{lem: Y and rho}. For the second part, note that \(\rho_{\tau_3}\) has the parity mixture determined by \(\tau_3\), while the target mixture is determined by \(\tau\). Therefore this contribution is bounded by $\mathbf P(\tau_3\neq \tau)$. Since $\tau_3=\tau_2$ under the coupling, we have
$$
\mathbf P(\tau_3\neq \tau)\leq\mathbf P(\tau_2\neq \tau_1)+\mathbf P(\tau_1\neq \tau).
$$
By the comparison estimate between $\tau_1$ and $\tau_2$ in \Cref{lem: Z and Y}, together
with the bound
$$
\mathbf P(\tau_1\neq \tau)\leq\exp\left(-(\log\log n)^{1/2+o(1)}\right),
$$
we obtain
$$
\mathbf P(\tau_3\neq \tau)\leq n^{-1+o(1)}+\exp\left(-(\log\log n)^{1/2+o(1)}\right)=\exp\left(-(\log\log n)^{1/2+o(1)}\right).
$$
Thus
$$
d_{\mathrm{TV}}\left(
\mathcal L(X_\tau),
\mathbf P(\tau\text{ is even})U_{\mathfrak A_n}
+
\mathbf P(\tau\text{ is odd})U_{\mathfrak A_n^c}
\right)
\leq
\exp\left(-(\log\log n)^{1/2+o(1)}\right).
$$
Finally, by \Cref{lem:parity of the hitting time},
$$
d_{\mathrm{TV}}\left(
\mathbf P(\tau\text{ is even})U_{\mathfrak A_n}+\mathbf P(\tau\text{ is odd})U_{\mathfrak A_n^c},
U_{\mathfrak S_n}
\right)=\exp\left(-(\log\log n)^{1/2+o(1)}\right).
$$
Therefore, another application of the triangle inequality gives
$$
d_{\mathrm{TV}}\bigl(\mathcal L(X_\tau),U_{\mathfrak S_n}\bigr)
\leq
\exp\left(-(\log\log n)^{1/2+o(1)}\right).
$$
This completes the proof.
\end{proof}

\section{Hitting time mixing for large cycles}\label{sec: large cycles}

In this section, we prove \Cref{thm: large cycle}. Unless otherwise noted, we use the notations from \Cref{sec: Jain-Sawhney}. As we will see, the proof of \Cref{thm: large cycle} reduces to a Fourier estimate for the two-step distribution $P_n^{*2}$. The following proposition gives the required bound on the contribution from all nontrivial irreducible representations.

\begin{prop}\label{prop: fourier bound large cycle}
Suppose $n=\omega(1),1\le n-k\le o(n^{1/2})$.  Then, we have 
$$\sum_{\lambda\ne(n),(1^n)}\frac{\chi_\l(\tau_k)^4}{d_\lambda^2}=o(1).$$
\end{prop}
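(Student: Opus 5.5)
We plan to evaluate $\chi_\lambda(\tau_k)$ exactly with the Murnaghan--Nakayama rule and then compare it with lower bounds for $d_\lambda$. Throughout, write $m:=n-k$, so $1\le m=o(n^{1/2})$ and $k>n/2$ for $n$ large.

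\textbf{Step 1 (character values).} Since $\tau_k$ has cycle type $(k,1^m)$, the Murnaghan--Nakayama rule gives
$$\chi_\lambda(\tau_k)=\sum_{\xi}(-1)^{\mathrm{ht}(\xi)}\,d_{\lambda\setminus\xi},$$
where $\xi$ runs over the rim hooks of size $k$ of $\lambda$.
\begin{enumerate}
\item A rim hook of size $k>n/2$ corresponds to a cell of hook length $k$. Two such hooks would each contain more than half of the cells of $\lambda$, and we will argue that this forces uniqueness. Hence $\chi_\lambda(\tau_k)=\pm d_\mu$ for a unique $\mu\vdash m$, or $\chi_\lambda(\tau_k)=0$.
\item In particular $|\chi_\lambda(\tau_k)|\le d_\mu\le\sqrt{m!}$.
\item The set of $\lambda$ with $\chi_\lambda(\tau_k)\neq0$ is indexed by pairs $(\mu,\text{position of the added $k$-rim hook})$. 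Using the abacus, a $\mu\vdash m$ admits at most $m+1$ addable $k$-rim hooks when $k>m$. So there are at most $(m+1)p(m)$ relevant $\lambda$.
\end{enumerate}

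\textbf{Step 2 (dimension lower bounds and case split).} For a relevant $\lambda$, let the $k$-hook have leg length $a$ and arm length $k-1-a$. Restricting $\lambda$ down to this hook shape via the branching rule gives $d_\lambda\ge\binom{k-1}{a}$. We split the relevant $\lambda$ into two families.
\begin{enumerate}
\item \emph{Middle range:} $m'\le a\le k-1-m'$, with $m'$ a slowly growing cutoff, say $m'=C\log n$ or somewhat larger. There $d_\lambda^2\ge\binom{k-1}{m'}^2$. Summing $(m!)^2/d_\lambda^2$ over at most $(m+1)p(m)$ partitions, the total is negligible once $m'$ is chosen suitably relative to $m$. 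If $m$ is close to $n^{1/2}$ this may require $m'$ of order $m$; we will tune $m'$ accordingly.
\item \emph{Extremes:} $\lambda_1\ge n-j$ or $l(\lambda)\ge n-j$ with $j\lesssim m'+m$. By the conjugation symmetry $s_{\lambda'}(k)=\pm s_\lambda(k)$ from \Cref{sec: Jain-Sawhney}, it suffices to treat $\lambda_1=n-j$ with $1\le j$.
\end{enumerate}

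\textbf{Step 3 (the extremes, expected main obstacle).} This is where the hypothesis $m=o(n^{1/2})$ must enter. Already $\lambda=(n-1,1)$ contributes
$$\frac{(m-1)^4}{(n-1)^2},$$
which is $o(1)$ exactly when $m=o(n^{1/2})$. For $\lambda_1=n-j$, the $k$-hook must start in the first row, and removing it leaves $\mu\vdash m$ with $\mu^*$ closely tied to $\lambda^*\vdash j$.
\begin{enumerate}
\item We expect $|\chi_\lambda(\tau_k)|\le\binom{m}{j}d_{\lambda^*}$. This is consistent with the intuition that $\chi_\lambda$ on a permutation with $m$ fixed points is governed by $\binom{\mathrm{fix}}{j}$, as in \Cref{thm :zetaM_Fourier_An}. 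If needed, we can instead bound it directly as $d_\mu\le\binom{m}{j}d_{\lambda^*}$ via branching.
\item From \Cref{lem: dimension bound}, $d_\lambda\ge c\binom{n}{j}d_{\lambda^*}$ for small $j$, and more crudely $d_\lambda\ge\binom{n-j}{j}$ in general.
\item The term is then at most of order
$$\frac{\binom{m}{j}^4 d_{\lambda^*}^2}{\binom{n}{j}^2}\lesssim\frac{(j!)^2}{(j!)^4}\left(\frac{m^2}{n}\right)^{2j}d_{\lambda^*}^2 .$$
\item Summing over $\lambda^*\vdash j$ with $\sum d_{\lambda^*}^2=j!$, the $j$-th total is at most $(m^2/n)^{2j}/j!$. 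Summing over $j\ge1$ gives $O(m^4/n^2)=o(1)$.
\end{enumerate}
The delicate points are making the character bound in item 1 rigorous for all $j$ up to the cutoff $m'+m$, and checking that the crude dimension bound in item 2 still beats $(m!)^2$ when $j$ is comparable to $m'$. Our fallback is to enlarge the middle range so that it absorbs these cases.
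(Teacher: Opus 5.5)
Your architecture differs from the paper's: you evaluate the character exactly via Murnaghan--Nakayama, then split by the leg length $a$ of the $k$-hook into a middle range (controlled by $d_\lambda\ge\binom{k-1}{a}$) and extremes (controlled by a first-row expansion). However, two claims it rests on are false.

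First, Step~1(3). For $k$-rim hooks of any partition, $(\#\text{addable})-(\#\text{removable})=k$; this is a standard $k$-abacus fact. So a partition $\mu\vdash m$ with $m<k$ has \emph{exactly} $k$ addable $k$-rim hooks, not at most $m+1$. For example, when $m=1$ the $\lambda$ with core $(1)$ are $(n)$, $(1^n)$ and $(n-2-c,2,1^c)$ for $0\le c\le n-4$. Hence there are $k\,p(m)\asymp n\,p(m)$ relevant $\lambda$; this is the paper's ``$O(n)$ positions''. Your middle range then needs $k\,p(m)(m!)^2=o\bigl(\binom{k-1}{m'}^2\bigr)$. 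That forces $m'\ge2$ as soon as $(m!)^2p(m)\gtrsim n$, and $m'\ge(1-o(1))m$ when $m=n^{1/2-o(1)}$. So your extremes unavoidably contain partitions with leg length $a\ge1$ and $j=n-\lambda_1$ up to about $2m$.

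Second, and more seriously, the bound of Step~3(1), $|\chi_\lambda(\tau_k)|\le\binom{m}{j}d_{\lambda^*}$, fails for precisely such $\lambda$.
\begin{itemize}
\item Take $m=1$ and $\lambda=(n-2,2)$. The cell $(1,1)$ has hook length $n-1=k$, and removing its rim hook leaves $(1)$, so $|\chi_\lambda(\tau_k)|=1$. Directly, $\chi_{(n-2,2)}(\sigma)=\binom{f}{2}-f+c_2=-1$ with $f=1$ fixed point and $c_2=0$ two-cycles. But $\binom{1}{2}d_{(2)}=0$.
\item Take $m=3$ and $\lambda=(n-3,3)$. It has core $(2,1)$, so $|\chi_\lambda(\tau_k)|=2>\binom{3}{3}d_{(3)}=1$.
\end{itemize}
The lower-order terms of the character polynomial are not dominated by $\binom{f}{j}$ when $f=m$ is small. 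What branching actually gives is $d_\mu\le\binom{m}{|\mu^*|}d_{\mu^*}\le\binom{m}{|\mu^*|}d_{\lambda^*}$ with $|\mu^*|=m-\mu_1$. Here $|\mu^*|<j$ exactly when the $k$-hook turns down out of the first row. In that case $\binom{m}{|\mu^*|}$ may exceed $\binom{m}{j}$, which is $0$ for $j>m$; an example is $(n-m-1-c,\,m+1,\,1^c)$ with core $(m)$, leg length $c+1$ and $j=m+1+c$. Both examples above have leg length $1$, so they lie in your extremes whenever $m'\ge2$, and enlarging the middle range cannot absorb them.

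The missing idea is to pay for a turned hook with the larger binomial in $d_\lambda$. Relative to the horizontal placement on the same core, a turned hook gains an extra factor $O(m/n)$ in $d_\mu/d_\lambda$. Summed over the $O(n)$ placements, this costs only a relative factor $O(m^2/n)=o(1)$, which is a second place where $m=o(n^{1/2})$ is needed. This is exactly the paper's route:
\begin{itemize}
\item group the sum by the core $\widetilde\lambda\vdash r$;
\item use \Cref{prop: fixed core rim hook sum} (trimming after the first downward turn) to get $O\bigl((3r/n)^{2(r-\widetilde\lambda_1)}\bigr)$ per core;
\item sum $d_{\widetilde\lambda}^2\le r^{2(r-l)}/(r-l)!$ over $\widetilde\lambda_1=l$.
\end{itemize}
You would also need $d_\lambda\ge(1-o(1))\binom{n}{j}d_{\lambda^*}$ for $j$ up to about $2m\gg\log n$. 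This lies outside \Cref{lem: dimension bound}(2) and must instead come from the hook-length formula, as in the paper's proof.
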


\begin{proof}[Proof of \Cref{thm: large cycle}, assuming \Cref{prop: fourier bound large cycle}]
After the first random cycle, the probability that we stop at the next cycle is exactly the probability that the second $k$-cycle covers the remaining $n-k$ cards. Therefore, we have
$$\mathbf{P}(\tau=2)=\frac{\binom{k}{n-k}}{\binom{n}{n-k}}=\left(1-\frac{n-k}{n}\right)\cdots\left(1-\frac{n-k}{k+1}\right)=1-O\left(\frac{(n-k)^2}{n}\right)=1-o(1).$$
Hence, it suffices to prove that $d_{TV}(P_n^{*2},U_{\A_n})=o(1)$. Recall from \eqref{eq: Fourier transform of Pn} that 
$$\widehat P_n^{*2}(\lambda)=(\widehat P_n(\lambda))^2=\frac{\chi_\lambda(\tau_k)^2}{d_\lambda^2}.$$
Moreover, the Fourier transform of $U_{\A_n}$ is one over the partitions $(n),(1^n)$, and zero on the other partitions. Now, applying the Cauchy-Schwarz inequality followed by the Plancherel formula, we have that
\begin{align}
\begin{split}
4\cdot d_{TV}(P_n^{*2},U_{\A_n})^2
&\le\sum_{\l\vdash n}d_\l\Tr\left((\widehat P_n^{*2}(\lambda)-\widehat U_{\A_n}(\l))(\widehat P_n^{*2}(\lambda)-\widehat U_{\A_n}(\l))^\dag\right)\\
&=\sum_{\lambda\ne(n),(1^n)}\frac{\chi_\l(\tau_k)^4}{d_\lambda^2}\\
&=o(1).\\
\end{split}
\end{align}
Therefore, we conclude $d_{TV}(P_n^{*2},U_{\A_n})=o(1)$, which completes the proof.
\end{proof}

\begin{rmk}
The assumption $k=n-o(n^{1/2})$ is sharp for the estimate in \Cref{prop: fourier bound large cycle}. Indeed, consider the partition $\lambda=(n-1,1)$. In this case,
$$
\chi_{(n-1,1)}(\tau_k)=d_{(n-k-1,1)}=n-k-1,
\qquad d_{(n-1,1)}=n-1.
$$
Therefore the single term corresponding to $\lambda=(n-1,1)$ in the spectral sum is
$$
\frac{\chi_{(n-1,1)}(\tau_k)^4}{d_{(n-1,1)}^2}=\frac{(n-k-1)^4}{(n-1)^2}.
$$
If $k=n-\Omega(n^{1/2})$, then this term is
already $\Omega(1)$. Thus the conclusion of the proposition cannot hold in that range, showing that the condition $k=n-o(n^{1/2})$ is optimal.
\end{rmk}

For the rest of this section, we aim to prove \Cref{prop: fourier bound large cycle}. We first record a consequence of the (recursive) Murnaghan-Nakayama rule for a single
large cycle. The uniqueness statement in the following proposition is the same observation as Lulov-Pak \cite[Lemma 5.1]{lulov2002rapidly}. For our later estimates, we also keep track of the resulting shape of this unique rim hook.

\begin{lemma}\label{lem: MN one term three types}
Let $n\ge 1$, let $k$ satisfy $1\le n-k<n/2$, and write $r:=n-k$.
Then for every partition $\lambda\vdash n$, 
there exists at most one $\tilde\lambda\vdash r$ such that $\lambda/\widetilde\lambda$ is a rim hook (also called border strip) of size $k$, and in this case we have 
$|\chi_\lambda(\tau_k)|=d_{\widetilde\lambda}$. Otherwise, we have $\chi_\lambda(\tau_k)=0$.
\end{lemma}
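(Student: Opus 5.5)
The plan is to apply the Murnaghan--Nakayama rule once, removing the $k$-cycle first. Write $\tau_k$ as a $k$-cycle times the identity on the remaining $r=n-k$ points, i.e.\ an element of cycle type $(k,1^r)$. The rule then gives
$$
\chi_\lambda(\tau_k)=\sum_{\widetilde\lambda}(-1)^{\mathrm{ht}(\lambda/\widetilde\lambda)}\chi_{\widetilde\lambda}(\mathrm{id}_{\mathfrak S_r})
=\sum_{\widetilde\lambda}(-1)^{\mathrm{ht}(\lambda/\widetilde\lambda)}d_{\widetilde\lambda}.
$$
The sum runs over $\widetilde\lambda\vdash r$ such that $\lambda/\widetilde\lambda$ is a rim hook of size $k$, and $\mathrm{ht}$ denotes the number of rows of the hook minus one. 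Consequently it suffices to show that at most one such $\widetilde\lambda$ exists. Once that is known, the sum is either empty, giving $\chi_\lambda(\tau_k)=0$, or consists of the single term $\pm d_{\widetilde\lambda}$.

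For uniqueness I would use the standard hook/rim-hook correspondence. Rim hooks of size $k$ in $\lambda$ are in bijection with the cells $(i,j)$ of $\lambda$ whose hook length satisfies $h_{(i,j)}=k$: the rim hook associated to $(i,j)$ runs along the boundary from the end of row $i$ to the bottom of column $j$. So I need to show that $\lambda$ has at most one cell of hook length $k>n/2$.

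Suppose two distinct cells $c_1=(i_1,j_1)$ and $c_2=(i_2,j_2)$ both had hook length $k$. The hook of a cell contains exactly $k$ cells, so each hook has more than $n/2$ cells. Two subsets of $\lambda$ each of size more than $n/2$ must intersect, since $|\lambda|=n$. I would then do a case check on how two hooks can meet. If $i_1=i_2$, the two hooks lie in the same row, and the hook of the cell further right is strictly contained in the other, so the two hook lengths differ. The same argument applies if $j_1=j_2$. Otherwise the hooks meet in at most two cells, namely $(i_1,j_2)$ and $(i_2,j_1)$ when these lie in $\lambda$. This gives
$$
|H_1\cup H_2|\ge 2k-2 .
$$
That lower bound is weaker than what I need, since $2k-2\le n$ only fails when $k>(n+2)/2$. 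So the plain counting argument does not quite finish, and the remaining step needs more structure.

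I expect this to be the main obstacle, and the cleanest route is abacus (beta-set) language. Represent $\lambda$ by its set of first-column hook lengths, equivalently a beta-set $B$. Removing a rim hook of size $k$ corresponds to moving a bead from a position $x\in B$ to $x-k\notin B$, with $x-k\ge0$. Take $|B|=\ell(\lambda)$ beads. Then $\max B=\lambda_1+\ell(\lambda)-1$, and $\sum_{b\in B}b=n+\binom{\ell(\lambda)}{2}$. A move $x\mapsto x-k$ requires $x\ge k$. If two distinct beads $x<y$ could both be moved, then $y\ge x+1\ge k+1$. I would then aim to show that the resulting partition sizes force $n\ge 2k$: the beads at $x$ and $y$, compared with the minimal configuration, contribute at least $(x-j)+(y-j')\ge 2k-1+\dots$ cells. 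The precise claim is that the number of cells of $\lambda$ is at least $(x-\#\{b\in B: b<x\})+(y-\#\{b\in B:b<y\})$, which are the lengths of the corresponding rows. Each of these row lengths is at least the number of empty positions below that bead, and there is an empty position at $x-k$ and another at $y-k$ below each bead. Alternatively, and perhaps more simply, I would cite Lulov--Pak \cite[Lemma 5.1]{lulov2002rapidly} for the uniqueness, as the paper indicates, and only spell out the Murnaghan--Nakayama reduction together with the identification $\chi_{\widetilde\lambda}(\mathrm{id})=d_{\widetilde\lambda}$, which gives $|\chi_\lambda(\tau_k)|=d_{\widetilde\lambda}$.
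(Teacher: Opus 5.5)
Your final route (a single application of the Murnaghan--Nakayama rule to cycle type $(k,1^r)$, the identification $\chi_{\widetilde\lambda}(\mathrm{id})=d_{\widetilde\lambda}$, and Lulov--Pak \cite[Lemma 5.1]{lulov2002rapidly} for uniqueness of the size-$k$ rim hook) is exactly the paper's proof. If you want the uniqueness self-contained, your counting attempt closes with two fixes: hooks of two cells in distinct rows and columns share at most one cell, since for $i_1<i_2$ only $(i_2,j_1)$ can lie in both, and neither hook contains $(1,1)$ because hook lengths strictly decrease along rows and columns (this monotonicity, not set containment, is also the correct justification in your same-row case), so $2k-1\le|H_1\cup H_2|\le n-1$, contradicting $2k>n$.
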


\begin{proof}
By the Murnaghan-Nakayama rule applied to the cycle type $(k,1^{n-k})$, we have
$$
\chi_\lambda(\tau_k)=\sum_{\substack{\widetilde\lambda\vdash n-k\\
\lambda/\widetilde\lambda\text{ is a rim hook of size }k}}
(-1)^{\operatorname{ht}(\lambda/\widetilde\lambda)-1}
d_{\widetilde\lambda}.
$$
Since $k>n/2$, Lulov-Pak \cite[Lemma 5.1]{lulov2002rapidly} shows that there is at most one way to remove a rim hook of size $k$ from $\lambda$. Hence the above sum has at most one term, and therefore 
$$|\chi_\lambda(\tau_k)|=d_{\widetilde\lambda}$$
when such a rim hook exists, while $\chi_\lambda(\tau_k)=0$ otherwise.
\end{proof}

\begin{prop}
\label{prop: fixed core rim hook sum}
Suppose that $r=o(n^{1/2})$, and fix a partition
$\widetilde\lambda\vdash r$ satisfying
$\widetilde\lambda_1'\le \widetilde\lambda_1\le r-1$. Then
$$
\sum_{\substack{\lambda\vdash n:\ \widetilde\lambda\subseteq\lambda\\
\lambda/\widetilde\lambda\text{ is a rim hook of size }n-r}}
\left(\frac{d_{\widetilde\lambda}}{d_\lambda}\right)^2
=
O\left(
\left(\frac{3r}{n}\right)^{2(r-\widetilde\lambda_1)}
\right).
$$
\end{prop}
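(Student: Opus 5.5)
The plan is to parametrize the rim hooks explicitly, bound each ratio $d_{\widetilde\lambda}/d_\lambda$ with the hook length formula, and sum the resulting geometric series.

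\textbf{Parametrization.} Write $k=n-r$. Since $k\gg r$, a border strip of size $k$ added to the small diagram $\widetilde\lambda$ must run along the outer boundary of $\widetilde\lambda$. It enters from the bottom of the first column and leaves along the first row. A border strip is determined by the contents of its two end cells, and these differ by $k-1$. So the admissible $\lambda$ are indexed by a single integer $j\ge 0$, namely the number of cells the strip contributes below row $\widetilde\lambda_1'$ in the first column. The number of cells in the first row beyond $\widetilde\lambda_1$ is then $k-j-c$, where $0\le c\le r$ counts the strip cells that wrap around the boundary of $\widetilde\lambda$. There are only $O(n)$ such $\lambda$, and each has one long row, one possibly long column, and a core of size $O(r)$.

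\textbf{Reduction to two cases.} By conjugation symmetry ($d_\lambda=d_{\lambda'}$, with the roles of row and column exchanged), it suffices to treat the strips whose row part is at least $k/2$; the others are handled by the same argument applied to $\lambda'$. For such $\lambda$, set $m:=n-\lambda_1$. The first row of $\lambda$ has length at most $n-(r-\widetilde\lambda_1)-j$, because at least $r-\widetilde\lambda_1$ cells of $\widetilde\lambda$ lie outside its first row and the $j$ column cells lie there too. Hence
$$m\ge (r-\widetilde\lambda_1)+j .$$

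\textbf{Lower bound on $d_\lambda$.} I would prove
$$\frac{d_\lambda}{d_{\widetilde\lambda}}\ \ge\ c\left(\frac{n}{3r}\right)^{m}$$
in two regimes. First, if $m\le\sqrt n$, compare hook products: every hook in the first row of $\lambda$ above a cell of $\lambda^*$ has length at most $\lambda_1-i+O(r)$. This gives $d_\lambda\ge\binom{n}{m}\,d_{\lambda^*}\,(1-O(rm/n))^{m}$, which is essentially a lower-bound version of \Cref{lem: dimension bound}. Next, $\lambda^*$ is $\widetilde\lambda^*$ with a column of length $j$ attached, together with a few wrapped cells. A second hook count therefore gives $d_{\lambda^*}\ge d_{\widetilde\lambda^*}\binom{m}{j}/(\text{poly}(r))^{j}$. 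Since also $d_{\widetilde\lambda}\le \binom{r}{r-\widetilde\lambda_1}d_{\widetilde\lambda^*}$, combining these with $\binom{n}{m}\ge (n/m)^m$ and $m\le r+j$ should yield the displayed bound. Second, if $m>\sqrt n$, then $\lambda$ has both a long row and a long column. Splitting $d_\lambda$ via the two arms gives $d_\lambda\ge\binom{n-O(r)}{j}$, which is superpolynomial in $n/r$ and in particular at least $(n/3r)^{2(r-\widetilde\lambda_1)}\cdot\omega(n)$. Since there are only $O(n)$ such $\lambda$, their total contribution is negligible.

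\textbf{Summation.} With the lower bound in hand, the sum is at most
$$\sum_{j\ge0}\left(\frac{3r}{n}\right)^{2(r-\widetilde\lambda_1+j)}=O\left(\left(\frac{3r}{n}\right)^{2(r-\widetilde\lambda_1)}\right),$$
because $3r/n\to0$. The hypothesis $\widetilde\lambda_1\le r-1$ guarantees $r-\widetilde\lambda_1\ge1$, so the bound is nontrivial. The hypothesis $\widetilde\lambda_1'\le\widetilde\lambda_1$ ensures that the column-dominated case gives exponent $2(r-\widetilde\lambda_1')\ge 2(r-\widetilde\lambda_1)$, which is no worse than the row case.

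\textbf{Main obstacle.} The hard step is the uniform lower bound on $d_\lambda/d_{\widetilde\lambda}$ with the constant $3$. The wrapped cells interact with the arm hooks, so the base must come out as $n/(3r)$ rather than, say, $n/r^2$. I would first attempt the hook-product comparison directly, bounding each arm hook by $\lambda_1-i+2r$. If that is too lossy, I would instead count standard tableaux of $\lambda$ whose entries $1,\dots,r$ fill a copy of $\widetilde\lambda$ placed suitably.
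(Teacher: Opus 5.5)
Your outline (reduce by conjugation to row-dominated strips, compare $d_\lambda$ with $d_{\widetilde\lambda}$ through first-row hook lengths, then sum) is close in spirit to the paper's. However, the inequality your argument rests on is false: $d_\lambda/d_{\widetilde\lambda}\ge c\,(n/3r)^m$ with $m=n-\lambda_1$ and $c$ uniform fails in the range $Cr\le m\le\sqrt n$. So neither your first regime nor the geometric sum over $j$ goes through.

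Take $r=3$ and $\widetilde\lambda=(2,1)$, which satisfies the hypotheses. Consider the strip with $j$ cells in the first column below $\widetilde\lambda$. Then $\lambda=(n-4-j,3,2,1^{j-1})$, $m=j+4$, $\lambda^*=(3,2,1^{j-1})$, and $d_{\lambda^*}=j(j+2)(j+4)/3$. For fixed $j$ and $n\to\infty$,
\[
\frac{d_\lambda}{d_{\widetilde\lambda}}\sim\frac{n^{j+4}}{(j+4)!}\cdot\frac{j(j+2)(j+4)}{6},
\qquad\text{so}\qquad
\frac{d_\lambda/d_{\widetilde\lambda}}{(n/9)^{j+4}}\to\frac{9^{j+4}\,j(j+2)(j+4)}{6\,(j+4)!}.
\]
The right-hand side tends to $0$ as $j\to\infty$, so no constant $c$ works. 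The failure is structural. When $\lambda^*$ is a long column plus $O(r)$ cells, $d_{\lambda^*}$ is only polynomial in $m$, of degree $O(r)$, and cannot absorb the $m!$ in $\binom nm$. The effective base is therefore $n/m$, not $n/(3r)$. Your ``second hook count'' $d_{\lambda^*}\ge d_{\widetilde\lambda^*}\binom mj/\mathrm{poly}(r)^j$ is far too weak to rescue this.

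There are also smaller inaccuracies:
\begin{itemize}
\item $m\le r+j$ fails in this example, where $m=r+j+1$.
\item The hook at $(1,1)$ has leg $\ell(\widetilde\lambda)+j-1$, not $O(r)$.
\item $j=0$ does not determine $\lambda$, since a strip may start in any row of $\widetilde\lambda$ and exit along row $1$. This last point is harmless.
\end{itemize}

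The missing idea is that no decay in $m$ is needed. There are only $O(n)$ admissible $\lambda$, and $n(3r/n)^2=9r^2/n=o(1)$. So it suffices that every $\lambda$ other than the single horizontal extension of the first row carries just one extra factor:
\[
d_{\widetilde\lambda}/d_\lambda=O\bigl((3r/n)^{r-\widetilde\lambda_1+1}\bigr).
\]
The paper gets this by truncating the hook after its first downward turn. This gives $\mu\subseteq\lambda$ with $\mu_1\ge n/3$ and $r-\widetilde\lambda_1+1\le|\mu^*|\le r$. Branching gives $d_\lambda\ge d_\mu$ and $d_{\mu^*}\ge d_{\widetilde\lambda^*}$. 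The first-row hook comparison for $\mu$ has correction product $1-o(1)$, because $|\mu^*|$ is bounded. Together with $d_{\widetilde\lambda}\le\binom{r}{r-\widetilde\lambda_1}d_{\widetilde\lambda^*}$, this yields the extra factor. It also makes your separate $m>\sqrt n$ regime unnecessary.

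You could instead repair your own setup along the same lines:
\begin{itemize}
\item For row-dominated strips, $d_\lambda\ge c\binom nm d_{\lambda^*}\ge c\binom nm d_{\widetilde\lambda^*}$ holds uniformly. Only the first column can be long, since $\lambda'_i\le\widetilde\lambda'_{i-1}+1$ for $i\ge2$, and its factor $\lambda_1/(\lambda_1+\lambda_1'-1)\ge 1/3$.
\item $\binom nm\ge\binom{n}{r-\widetilde\lambda_1+1}$ for every non-exceptional strip.
\item Combining these with the $O(n)$ count closes the argument.
\end{itemize}
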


\begin{proof}
Since $\lambda/\widetilde\lambda$ is a rim hook of size $n-r$, it is
contained in the outer rim of $\lambda$, and hence
$$
n-r\le \lambda_1+\lambda_1'-1.
$$
Since $r=o(n^{1/2})$, every partition appearing in the sum satisfies, for all sufficiently large $n$, either
$\lambda_1\ge \frac n3$ or $\lambda_1'\ge \frac n3$. We first consider the contribution of the partitions satisfying $\lambda_1\ge n/3$.

We will repeatedly use the following consequence of the hook-length
formula. Given a partition $\eta\vdash N$, we will write $\eta^*$ for the partition obtained from $\eta$ by deleting its first row, and we will Denote $s:=|\eta^*|$. Then
$$
\frac{d_\eta}{d_{\eta^*}}
=
\binom{N}{s}
\prod_{j=1}^{\eta_2}
\frac{\eta_1-j+1}
{\eta_1-j+1+\eta_j'-1}.
$$
If $\eta_1\ge n/3$ and $s\le r+1$, then $r=o(n^{1/2})$ implies
$$
\prod_{j=1}^{\eta_2}
\frac{\eta_1-j+1}
{\eta_1-j+1+\eta_j'-1}
=1-o(1),
$$
uniformly over all such $\eta$. Consequently,
$d_\eta\ge(1-o(1))\binom{N}{s}d_{\eta^*}$.
We will also use the upper bound from
\Cref{lem: dimension bound}, which claims that
$$
d_{\widetilde\lambda}
\le
\binom{r}{r-\widetilde\lambda_1}
d_{\widetilde\lambda^*}.
$$

To begin with, consider the partition $\lambda$ for which $\lambda/\widetilde\lambda$ is a horizontal strip added entirely to the first row. In this case,
$$
\lambda^*=\widetilde\lambda^*,
\qquad
|\lambda^*|=r-\widetilde\lambda_1.
$$
Applying the hook-length comparison to $\lambda$, together with
\Cref{lem: dimension bound} for $\widetilde\lambda$, gives
$$\frac{d_{\widetilde\lambda}}{d_\lambda}\le\frac{
\binom{r}{r-\widetilde\lambda_1}
d_{\widetilde\lambda^*}
}{(1-o(1))\binom{n}{r-\widetilde\lambda_1}
d_{\lambda^*}}=O\left(
\left(\frac{r}{n}\right)^{r-\widetilde\lambda_1}
\right).$$

For every other partition $\lambda$ with $\lambda_1\ge n/3$, trim the
rim hook immediately after its first downward turn, and denote the
remaining partition by $\mu$. Then $\mu\subseteq\lambda$, and the
diagram $\mu^*$ obtained by deleting the first row of $\mu$ contains
$\widetilde\lambda^*$ together with at least one additional box. In
particular,
$$
r-\widetilde\lambda_1+1\le |\mu^*|\le r.
$$
By the branching rule, $d_{\mu^*}\ge d_{\widetilde\lambda^*}$. Therefore, the hook-length formula and
\Cref{lem: dimension bound} give
$$\frac{d_{\widetilde\lambda}}{d_\lambda}\le\frac{d_{\widetilde\lambda}}{d_\mu}\le\frac{\binom{r}{r-\widetilde\lambda_1}d_{\widetilde\lambda^*}
}{(1-o(1))\binom{|\mu|}{|\mu^*|}d_{\mu^*}}\le(1+o(1))\frac{\binom{r}{r-\widetilde\lambda_1}}{\binom{|\mu|}{r-\widetilde\lambda_1+1}}=O\left(\left(\frac{3r}{n}\right)^{r-\widetilde\lambda_1+1}\right).$$
Here, we use the assumption that $|\mu|\ge \lambda_1\ge n/3$.

There are only $O(n)$ possible partitions $\lambda$ of this second
kind. Indeed, with $\widetilde\lambda$ fixed, one may imagine sliding
the rim hook along the outer boundary, starting with its horizontal
part in the top row and moving its turning point successively
downward until its vertical part reaches the bottom of the first
column. The position of this turning point determines $\lambda$, and
there are at most $O(n)$ possible positions.

It follows that
\begin{align*}
\sum_{\substack{\lambda\supseteq\widetilde\lambda\\
\lambda/\widetilde\lambda\text{ is a rim hook of size }n-r\\
\lambda_1\ge n/3}}
\left(\frac{d_{\widetilde\lambda}}{d_\lambda}\right)^2 &\le O\left(\left(\frac{r}{n}\right)^{
2(r-\widetilde\lambda_1)}\right)+O(n)
O\left(\left(\frac{3r}{n}\right)^{
2(r-\widetilde\lambda_1+1)}\right) \\
&=O\left(\left(\frac{3r}{n}\right)^{
2(r-\widetilde\lambda_1)}\right),
\end{align*}
where the last step follows from $r=o(n^{1/2})$.

Finally, consider the partitions satisfying $\lambda_1'\ge n/3$.
Applying the preceding argument after conjugation gives the bound
$$
O\left(
\left(\frac{3r}{n}\right)^{
2(r-\widetilde\lambda_1')}
\right).
$$
Since $\widetilde\lambda_1\ge\widetilde\lambda_1'$, we have $r-\widetilde\lambda_1'\ge r-\widetilde\lambda_1$. As $3r/n<1$ for all sufficiently large $n$, this contribution is also
$$
O\left(
\left(\frac{3r}{n}\right)^{
2(r-\widetilde\lambda_1)}
\right).
$$
Combining the two cases completes the proof.
\end{proof}

\begin{prop}
\label{prop: one row and one column cores}
Suppose that $r=o(n^{1/2})$. Then
$$
\sum_{\substack{\lambda\vdash n,\ \lambda\neq(n),(1^n)\\
\lambda/(r)\text{ is a rim hook of size }n-r}}
\frac{1}{d_\lambda^2}=O\left(\frac{r^2}{n}\right)=o(1).
$$
Similarly,
$$
\sum_{\substack{\lambda\vdash n,\ \lambda\neq(n),(1^n)\\
\lambda/(1^r)\text{ is a rim hook of size }n-r}}\frac{1}{d_\lambda^2}=O\left(\frac{r^2}{n}\right)=o(1).
$$
\end{prop}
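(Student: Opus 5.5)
The plan is to enumerate the partitions $\lambda$ explicitly and bound $1/d_\lambda^2$ for each. Since $\lambda\supseteq(r)$ and $\lambda/(r)$ is a rim hook of size $n-r$, the skew shape $\lambda/(r)$ is connected and contains no $2\times2$ square. Hence $\lambda$ is obtained from the single row $(r)$ by attaching a border strip along its outer boundary. Such a strip consists of three pieces: a horizontal run extending the first row to the right, possibly a run of boxes in the second row starting below the first column, and a vertical run down the first column. Because $(r)$ has only one row, the strip can occupy at most the first row to the right of column $r$, the second row in columns $1,\dots,c$ with $c\le r+1$, and the first column below row $2$. Therefore $\lambda$ is determined by at most two parameters: the length $a$ of the horizontal extension in row one and the number $c$ of boxes in row two. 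This forces $\lambda=(r+a,\,c,\,1^{b})$ with $a+c+b=n-r$ and $c\le r+1$, together with the constraints making the strip connected: if $a\ge1$ and $c\ge1$, connectivity through the corner requires $c=r+1$; otherwise either $a=0$ or $c=0$. So there are $O(n)$ shapes, parametrized essentially by one free parameter once $c$ is chosen, and each is a hook plus a bounded second-row correction.

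Next I will bound $d_\lambda$ from below for each shape. For a hook $(n-j,1^{j})$ we have $d_\lambda=\binom{n-1}{j}$. For a near-hook, $(r+a,c,1^b)$ contains a hook $(r+a,1^{b+1})$ after removing $c-1\le r$ boxes, so by the branching rule
$$d_\lambda\ \ge\ d_{(r+a,1^{b+1})}=\binom{n-c}{b+1}.$$
Excluding $\lambda=(n),(1^n)$, the sum is dominated by the two ends $\min(b+1,\,r+a-1)$ small. The terms with $\min(b,a)=j$ have $d_\lambda\gtrsim \binom{n-r-1}{j}\ge (n/2)^j/j!$ once $j\ge1$. Summing over $j\ge1$ gives $O(1/n^2)$ per value of $c$ from the $j=1$ terms, plus geometrically smaller contributions.

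The dangerous terms are those with $j=0$ that are not $(n)$ or $(1^n)$. These are $\lambda=(r+a,c)$ with $b=0$ (two-row shapes) and their conjugate analogues $(r,1^{\dots})$-type shapes with $a=0$. For two-row shapes $(n-c,c)$ with $1\le c\le r+1$, the hook length formula gives $d_{(n-c,c)}=\binom{n}{c}-\binom{n}{c-1}\ge \tfrac12\binom nc\ge n/2$, so these contribute $O(r/n^2)$. The shapes with $a=0$ are $(r,c,1^{b})$ with $c\le r$ and $b\approx n-r-c$, hence $d_\lambda\ge \binom{n-r}{r-1}$-ish, i.e.\ at least of order $n$ unless $r=1$. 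In the degenerate case $\lambda=(r,1^{n-r})$, $d_\lambda=\binom{n-1}{r-1}$, which is at least $n-1$ when $r\ge 2$; when $r=1$ this is $(1^n)$ and is excluded. Collecting the terms, the total is at most $O(r)\cdot O(n^{-2})\cdot O(\text{poly})$ per family, plus $O(r^2)$ shapes each of size $O(1/n)$ in the worst case $d_\lambda\asymp n$. This yields $O(r^2/n)$, which is $o(1)$ since $r=o(n^{1/2})$.

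The second statement follows by applying conjugation: $\lambda\mapsto\lambda'$ maps rim hooks over $(r)$ to rim hooks over $(1^r)$, preserves $d_\lambda$, and swaps $(n)$ and $(1^n)$. The main obstacle is the careful enumeration of shapes and isolating the ones with $d_\lambda$ only linear in $n$. I expect to organize this through the lower bound $d_\lambda\ge\min\{\binom{n-r-1}{j}:\ j\ge1\}$, where $j$ is the smaller of the arm and leg lengths beyond the core, showing that at most $O(r^2)$ shapes have $d_\lambda=O(n)$.
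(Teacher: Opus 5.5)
Your proof is correct, and it takes a different route from the paper. The paper never lists the shapes. It appeals to the trimming case in the proof of \Cref{prop: fixed core rim hook sum}: cutting the rim hook just after its first downward turn gives $\mu\subseteq\lambda$ with $|\mu|\ge n/3$ and $|\mu^*|\ge 1$. The hook-length estimate $d_\mu\ge(1-o(1))\binom{|\mu|}{|\mu^*|}d_{\mu^*}$ then gives the uniform bound $1/d_\lambda=O(r/n)$. Multiplying by an $O(n)$ count of shapes (sliding the turning point) gives $O(r^2/n)$; the only horizontal extension, $\lambda=(n)$, is excluded.

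You instead determine the family exactly. Besides $(n)$, it consists of $(\lambda_1,r+1,1^b)$ with $\lambda_1\ge r+1$, and $(r,c,1^{n-r-c})$ with $1\le c\le r$. You then compare each shape with the hook $(\lambda_1,1^{b+1})$ it contains via the branching rule, which gives $d_\lambda\ge\binom{n-c}{b+1}$. This avoids the hook-length product estimate and any split into $\lambda_1\ge n/3$ versus $\lambda_1'\ge n/3$. It is also sharper. The paper multiplies a uniform per-shape bound by the $O(n)$ count, whereas your bound shows that only $O(r)$ shapes have dimension of order $n$ and the rest decay like $\binom{n-r-1}{j}^{-2}$, so the sum is in fact $O(r/n^2)$. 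The paper's version is shorter only because it recycles machinery built for general cores $\widetilde\lambda$.

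You should rewrite your final tally, which is garbled:
\begin{itemize}
\item A shape with $d_\lambda\asymp n$ contributes $\asymp n^{-2}$, not $n^{-1}$.
\item The factor $O(\mathrm{poly})$ is undefined; a power of $n$ there would destroy the estimate.
\item Your enumeration gives $O(r)$ shapes with $d_\lambda=O(n)$, not $O(r^2)$.
\end{itemize}
None of this is needed. For every admissible $\lambda\neq(n),(1^n)$ you have $d_\lambda\ge\binom{n-c}{b+1}$ with $1\le b+1\le n-c-1$. The right-hand inequality is equivalent to $r+a\ge 2$, which fails only for $\lambda=(1^n)$. Hence $d_\lambda\ge n-c\ge n-r-1$. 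Together with your $O(n)$ count, this already gives $O(1/n)$, which is $O(r^2/n)$, and your summation over $j=\min(a,b)$ refines it to $O(r/n^2)$. The two-row shapes $(n-c,c)$ with $c\le r$ do not actually occur, since $\lambda/(r)$ would then be disconnected, but including them is harmless overcounting.
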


\begin{proof}
The first estimate follows routinely from the proof of \Cref{prop: fixed core rim hook sum}. Indeed, when $\widetilde\lambda=(r)$, the exceptional horizontal extension is precisely $\lambda=(n)$, which is excluded from the sum. Every remaining partition belongs to the second case considered there. There are at most $O(n)$ such partitions, and each of them satisfies
$$
\frac{d_{(r)}}{d_\lambda}=\frac{1}{d_\lambda}
=O\left(\frac{2r}{n}\right).
$$
Consequently,
$$
\sum_{\substack{\lambda\vdash n,\ \lambda\neq(n),(1^n)\\
\lambda/(r)\text{ is a rim hook of size }n-r}}
\frac{1}{d_\lambda^2}
\le O(n)O\left(\frac{r^2}{n^2}\right)=O\left(\frac{r^2}{n}\right)=o(1).
$$

The second estimate follows from the first one by conjugation, using
$d_{\lambda'}=d_\lambda$.
\end{proof}

Finally, we are prepared for our proof of \Cref{prop: fourier bound large cycle}.

\begin{proof}[Proof of \Cref{prop: fourier bound large cycle}]
Write $r:=n-k$. By \Cref{lem: MN one term three types}, for every
$\lambda\vdash n$, either $\chi_\lambda(\tau_k)=0$, or there exists a
unique partition $\widetilde\lambda\vdash r$ such that
$\lambda/\widetilde\lambda$ is a rim hook of size $k$, in which case
$$
|\chi_\lambda(\tau_k)|=d_{\widetilde\lambda}.
$$
Therefore,
\begin{align*}
\sum_{\lambda\neq(n),(1^n)}
\frac{\chi_\lambda(\tau_k)^4}{d_\lambda^2}
&=
\sum_{\widetilde\lambda\vdash r}
d_{\widetilde\lambda}^2
\sum_{\substack{\lambda\vdash n:\,
\lambda/\widetilde\lambda\text{ is a rim hook of size }k\\
\lambda\neq(n),(1^n)}}
\left(\frac{d_{\widetilde\lambda}}{d_\lambda}\right)^2.
\end{align*}

Since conjugation preserves dimensions and sends the pair
$(\lambda,\widetilde\lambda)$ to
$(\lambda',\widetilde\lambda')$, it is enough, up to a factor of $2$,
to sum over the partitions satisfying
$\widetilde\lambda_1\ge\widetilde\lambda_1'$.
The contribution of $\widetilde\lambda=(r)$ is
$o(1)$ by \Cref{prop: one row and one column cores}. It remains to consider
$\widetilde\lambda\neq(r)$. For
$1\le l\le r-1$, \Cref{lem: dimension bound} gives
$$
d_{\widetilde\lambda}
\le
\binom{r}{r-l}d_{\widetilde\lambda^*}
$$
whenever $\widetilde\lambda_1=l$, where
$\widetilde\lambda^*$ is obtained by deleting the first row. Consequently, using the identity
$$
\sum_{\mu\vdash r-l}d_\mu^2=(r-l)!,
$$
we obtain
$$d_{\widetilde\lambda}^2\le
\binom{r}{r-l}^2
\sum_{\mu\vdash r-l}d_\mu^2=
\binom{r}{r-l}^2(r-l)!\le
\frac{r^{2(r-l)}}{(r-l)!}.$$

Applying \Cref{prop: fixed core rim hook sum} and then summing over
$l$ therefore gives
\begin{align*}
\sum_{\substack{\widetilde\lambda\vdash r,\,
\widetilde\lambda\neq(r)\\
\widetilde\lambda_1\ge\widetilde\lambda_1'}}
d_{\widetilde\lambda}^2
\sum_{\substack{\lambda\vdash n:\,
\lambda/\widetilde\lambda\text{ is a rim hook of size }k}}
\left(\frac{d_{\widetilde\lambda}}{d_\lambda}\right)^2 &\le
\sum_{l=1}^{r-1}
O\left(\left(\frac{3r}{n}\right)^{2(r-l)}
\right)\sum_{\substack{\widetilde\lambda\vdash r\\\widetilde\lambda_1=l}}
d_{\widetilde\lambda}^2 \\
&\le
\sum_{l=1}^{r-1}
O\left(
\frac{(3r^2/n)^{2(r-l)}}{(r-l)!}
\right) \\
&\le
O\left(
\exp\left(\left(\frac{3r^2}{n}\right)^2\right)-1
\right)\\
&=o(1),
\end{align*}
where the last equality follows from $r=o(n^{1/2})$.

The conjugate partitions contribute the same amount. Combining the
preceding estimates proves
$$
\sum_{\lambda\neq(n),(1^n)}
\frac{\chi_\lambda(\tau_k)^4}{d_\lambda^2}
=o(1).
$$
\end{proof}

\bibliographystyle{plain}
\bibliography{references.bib}

\end{document}